\newtheorem{thm}{Theorem}[section]
\newtheorem{cor}{Corollary}[section]
\newtheorem{lemma}[thm]{Lemma}
\newtheorem{proposition}[thm]{Proposition}
\newtheorem{defn}[thm]{Definition}
\newtheorem{definition}[thm]{Definition}
\newtheorem{deflem}[thm]{Definition/Lemma}
\newcommand{\RR}{\mathbb{R}}
\newcommand{\BB}{\mathbb{B}}
\newcommand{\CC}{\mathbb{C}}
\newcommand{\GG}{\mathbb{G}}
\newcommand{\ZZ}{\mathbb{Z}}
\newcommand{\QQ}{\mathbb{Q}}
\newcommand{\NN}{\mathbb{N}}
\newcommand{\WW}{\mathbb{W}}
\newcommand{\PP}{\mathbb{P}}
\newcommand{\FF}{\mathbb{F}}
\newcommand{\HH}{\mathbb{H}}
\newcommand{\la}{\langle}
\newcommand{\ra}{\rangle}
\newcommand{\bsm}{\left ( \begin{smallmatrix} }
\newcommand{\esm}{\end{smallmatrix} \right )}
\newcommand{\bma}{\begin{pmatrix}}
\newcommand{\ema}{\end{pmatrix}}
\newcommand{\dc}{\mathcal{D}}
\newcommand{\ec}{\mathcal{E}}
\newcommand{\fc}{\mathcal{F}}
\newcommand{\hc}{\mathcal{H}}
\newcommand{\ic}{\mathcal{I}}
\newcommand{\lc}{\mathcal{L}}
\newcommand{\mc}{\mathcal{M}}
\newcommand{\rc}{\mathcal{R}}
\newcommand{\vc}{\mathcal{V}}
\newcommand{\wc}{\mathcal{W}}
\newcommand{\opn}{\operatorname}
\newcommand{\op}{\oplus}
\newcommand{\Hf}{\mathfrak{H}}
\newcommand{\Gcd}{\opn{gcd}}
\newcommand{\Lcm}{\opn{lcm}}
\newcommand{\beq}{\begin{equation*}}
\newcommand{\eeq}{\end{equation*}}
\newcommand{\bpm}{\begin{pmatrix}}
\newcommand{\epm}{\end{pmatrix}}
\newcommand{\bi}{\begin{itemize}}
\newcommand{\ei}{\end{itemize}}
\title{On the Kodaira dimension of the moduli of deformation generalised Kummer varieties}
\author{Matthew Dawes}
\date{}
\begin{document}
\maketitle
\begin{abstract}
  We prove general type results for orthogonal modular varieties associated with moduli spaces of compact hyperk\"ahler manifolds of deformation generalised Kummer type (\emph{`deformation generalised Kummer varieties'}).
  More precisely, we consider moduli spaces of deformation generalised Kummer fourfolds with split-polarisation of degree $2d$.
  Our main result is that when $d$ is prime or $2d$ is square-free then the associated modular varieties are of general type when $d$ exceeds certain bounds.
  As a corollary, we conclude that the corresponding moduli spaces are also of general type.
\end{abstract}
\section{Introduction}\label{introsec}
\subsection{Overview}
Let $\opn{O}^+(L)$ be the kernel of the real spinor norm on $\opn{O}(L)$ where $L$ is an even integral lattice of signature $(2,n)$.
If $\dc_L$ is the component of the Hermitian symmetric space 
\begin{equation*}\Omega_L = \{ [x] \in \PP(L \otimes \CC) \mid (x,x)=0, (x, \overline{x})>0 \}
\end{equation*}
preserved by $\opn{O}^+(L)$ and $\Gamma \subset \opn{O}^+(L)$ is a subgroup of finite index, then the quotient 
$\fc(\Gamma) = \dc_L/\Gamma$ is known as an \emph{orthogonal modular variety} (or an \emph{orthogonal modular $n$-fold}).  
By the results of Baily and Borel, orthogonal modular varieties are irreducible normal quasi-projective varieties \cite{bailyborel}.
A classical problem in algebraic geometry is to determine their Kodaira dimension.

One expects most orthogonal modular varieties to be of general type.
This is typically proved by constructing pluricanonical forms on a smooth projective model of $\fc(\Gamma)$ using modular (i.e. automorphic) forms for $\Gamma$ and a technique known as the \emph{`low-weight cusp form trick'}.
The exact details of this approach depend on $n$.
A general method is known for $n \geq 9$ which was initially developed for $n=19$ \cite{Kondo, GHSK3} in connection with the moduli of K3 surfaces, finally culminating in the results of Ma \cite{Ma} for $n=17$ and $n>20$.
However, a naive application of these methods fails in small dimension due to problems related to singularities and the existence of special modular forms. 
The purpose of this paper is to introduce techniques for dealing with these problems when $n=4$, in the case of orthogonal modular varieties associated with moduli spaces of compact hyperk\"ahler manifolds of deformation generalised Kummer type (\emph{`deformation generalised Kummer varieties'}) with split polarisation.
Our main result (Theorem \ref{gtthm}) states that when the degree of polarisation exceeds a certain bound, then the corresponding modular varieties are of general type, subject to arithmetic constraints on the degree.
As a corollary, this implies the associated moduli spaces are also of general type.
To the best of our knowledge, these are the first general type results for orthogonal modular fourfolds and moduli spaces of deformation generalised Kummer varieties.
\subsection{Structure of the paper}
In \S\ref{modulisec}, we introduce moduli spaces of deformation generalised Kummer varieties and the modular group $\Gamma$.
In \S\ref{toroidalsec}, we introduce notation and definitions related to toroidal compactifications, modular forms and the low-weight cusp form trick.
In \S\ref{SingsSec}, we study singularities in orthogonal modular fourfolds and classify torsion in $\Gamma$ by considering $p$-elementary lattices.
In \S\ref{BranchSec}, we study the branch divisor of $\fc(\Gamma)$.
In \S\ref{ExtensionSec}, we obtain sufficient conditions for modular forms to define pluricanonical forms on $\fc(\Gamma)$.
In \S\ref{lwcfsec}, we prove the existence of low-weight cusp forms for $\Gamma$ by studying a dimension formula for vector-valued modular forms and applying Borcherds-Gritsenko lifting.
In \S\ref{HMSec} and \S\ref{hmbounds}, we calculate and bound Hirzebruch-Mumford volumes, which underpin the obstruction calculations of \S\ref{ObsSec}.
We conclude with general type results in \S\ref{GTsec}.
 \section{Moduli of Deformation Generalised Kummer varieties}\label{modulisec}
\subsection{Elementary results on lattices and orthogonal groups}
We start by introducing elementary results on lattices and orthogonal groups.
Standard references for lattices are \cite{SPLAG} and \cite{Nikulin}.
Unless otherwise stated, all lattices are assumed to be even and integral.

Let $L$ be a lattice.
The \emph{signature} of $L$ is defined as the pair $(m_+, m_-)$ where $m_+$ and $m_-$ are the number of positive and negative terms in a diagonalisation of the Gram matrix of a basis of $L$, respectively.
We will use $L_1 \op L_2$ to denote an orthogonal direct sum of lattices or finite quadratic forms $L_1$ and $L_2$;
$mL$ to denote the orthogonal direct sum of $m$ copies of $L$;
and $L(n)$ to denote the lattice obtained by multiplying the quadratic form of $L$ by $n$.
We will use $U$ to denote the hyperbolic plane and define a \emph{standard basis} of $U$ as a basis for $U$ with Gram matrix 
\[
\bpm
0 & 1 \\
1 & 0
\epm.
\]
We will use $\la x \ra$ to denote the rank 1 lattice generated by a vector $v$ of length $v^2:=(v,v)=x$ and we assume that all root lattices are negative definite.

Let $L^{\vee}:=\opn{Hom}(L, \ZZ)$ denote the \emph{dual lattice} of $L$.
For $u \in L$, the \emph{divisor} $\opn{div}(u)$ of $u$ in $L$ is defined as the positive generator of the ideal $(u, L)$ and we let  $u^*:=u/\opn{div}(u)$.
The \emph{discriminant group} $D(L)$ of $L$, defined by $D(L) := L^{\vee} / L$, inherits a  $\QQ/2 \ZZ$-valued finite quadratic form $q_L$ from $L$ (the \emph{discriminant form} of $L$) and the tuple $\opn{gen}(L) := (m_+, m_-, q_L)$  encodes the genus of $L$, which we also denote by $\opn{gen}(L)$ \cite{Nikulin}.

If $\opn{O}(L)$ is the orthogonal group of $L$, we use  $\widetilde{\opn{O}}(L)$ to denote the \emph{stable orthogonal group}
\[
  \widetilde{\opn{O}}(L) := \{ g \in \opn{O}(L) \mid \text{$gv = v + L$ $\forall v \in L^{\vee}$} \} 
  \]
  and, for $G \subset \opn{O}(L)$, we let $\widetilde{G} := G \cap \widetilde{\opn{O}}(L)$.
  We will often use the property that if $S \subset L$ is a sublattice of $L$ then $\widetilde{\opn{O}}(S) \subset \widetilde{\opn{O}}(L)$ \cite[Lemma 7.1]{Handbook}.  
For $g \in \opn{O}(L \otimes \RR)$ factored as a product of reflections $g = \sigma_{v_1} \ldots \sigma_{v_m}$ \cite{cassels}, where
\begin{equation}\label{refdef}
  \sigma_v : x \mapsto x - \frac{2(x,v)v}{(v,v)} \in \opn{O}(L \otimes \RR)
\end{equation}
for $v \in L \otimes \RR$,
the \emph{real spinor norm} \cite{kneser} 
\[
\opn{sn}_{\RR}: \opn{O}(L \otimes \RR) \rightarrow \RR^*/(\RR^*)^2
\]
is defined by 
\[
  \opn{sn}_{\RR}(g) =
  \left ( \frac{-(v_1, v_1)}{2} \right )
  \ldots
  \left ( \frac{-(v_m, v_m)}{2} \right )
  \in \RR / (\RR^*)^2.
\]
We will use a superscript $+$ to denote the intersection of a subgroup of $\opn{O}(L \otimes \RR)$ with the kernel of $\opn{sn}_{\RR}$ (e.g. $\opn{O}^+(L)$, $\widetilde{\opn{O}}^+(L)$).

A sublattice $S \subset L$ is said to be \emph{primitive} if $L/S$ is torsion-free.
Two primitive lattice embeddings  $S_1 \hookrightarrow L_1$ and $S_2 \hookrightarrow L_2$ are said to be isomorphic if there exists $g \in \opn{Iso}(L_1, L_2)$ restricting to an isomorphism between $S_1$ and $S_2$ \cite{Nikulin}; and if $L_1 = L_2$ and $\Gamma \subset \opn{O}(L_1)$, we say that $S_1$ and $S_2$ are \emph{equivalent under $\Gamma$} if $g$  can be taken in $\Gamma$.
We will often determine the $\widetilde{\opn{SO}}^+(L)$ orbits of primitive vectors in $L$ using the \emph{Eichler criterion} \cite[\S10]{Eichler} \cite[Proposition 3.3]{abelianisation}, which states that if $u, v \in L$ are primitive and $L \cong 2U \op L_0$ for a lattice $L_0$, then $u$ and $v$ are equivalent under $\widetilde{\opn{SO}}^+(L)$ if and only if $u^2=v^2$ and $u^* \equiv v^* \bmod{L}$. 
\subsection{Deformation generalised Kummer varieties}
Our overview follows \cite{Handbook}. 
If $A$ is an abelian surface and $A^{[n+1]}$ is the Hilbert scheme parametrising $(n+1)$-points on $A$, there is a natural morphism
\[
  p:A^{[n+1]} \rightarrow A
\]
defined by addition on $A$.
Deformations of the fibre $X:=p^{-1}(0)$ are known as \emph{deformation generalised Kummer varieties} and are examples of compact hyperk\"ahler (irreducible holomorphic symplectic) manifolds \cite{beauvillesexamples,huybrechtsbook} (i.e. $X$ is a simply connected compact K\"ahler manifold and $H^0(X, \Omega_X^2)$ is generated by an everywhere non-degenerate holomorphic 2-form).
The second integral cohomology of a compact hyperk\"ahler manifold can be endowed with the \emph{Beauville-Bogomolov lattice} \cite{beauvillesexamples, fujiki}.
For deformation generalised Kummer varieties, the Beauville-Bogomolov lattice is given by \cite{rapagnetta2}
\[
H^2(X, \ZZ) = 3 U \op \la -2(n+1) \ra
\]
and will be denoted by $M$ when viewed as an abstract lattice.

To define moduli, we first fix some discrete invariants (further details can be found in \cite{Handbook}).
Let $X$ be a deformation generalised Kummer variety of dimension $2n$.
A choice of ample line bundle $\lc \in \opn{Pic}(X)$ defines a \emph{polarisation} for $X$ and, by taking the first Chern class, we obtain a vector $h \in M$ where $h:=c_1( \lc) \in H^2(X, \ZZ) \cong M$, and a lattice $L := h^{\perp} \subset M$ of signature $(2,4)$.
We assume all polarisations are primitive (i.e. $h$ is primitive in $M$).
The \emph{degree} $2d$ of $\lc$ is defined as the length $2d:=h^2$,
the \emph{polarisation type} $[h]$ of $\lc$ is defined as the $\opn{O}(M)$-orbit of $h$ and 
 the \emph{numerical type} of $(X, \lc)$ is given by the tuple $N=(2n, L, [h])$.
We will only consider polarisations of \emph{split type}, which are those satisfying $\opn{div}(h)=1$.
In such a case,  the lattice $L$ is given by \cite[Corollary 4.0.3]{DawesThesis} 
\begin{equation}\label{ldef}
  L = 2U \op \la - 2d \ra \op \la 2(n+1) \ra,
\end{equation}
which perhaps suggests the etymology of the term.
By the Eichler criterion, the numerical type of a split polarisation is uniquely determined by the dimension $2n$ and the degree $2d$.

By the results of Viehweg \cite{Viehweg}, Matsusaka's big theorem \cite{Matsusaka} and a theorem of Koll\'ar and Matsusaka \cite{KollarMatsusaka}, there exists a GIT quotient $\mc_N$ parametrising compact hyperk\"ahler manifolds of fixed numerical type $N$.
By the Hodge-theoretic Torelli theorem \cite{HuybrechtsBourbaki, MarkmanTorelli, VerbitskyGlobal}, there is a holomorphic period map 
\[
\varphi: \mc_N \rightarrow \dc_L / \opn{O}^+(L, h)
\]
where 
\[
\opn{O}^+(L, h) := \{ g \in \opn{O}^+(L) \mid gh=h \}.
\]
As $\mc_N$ and $\dc_L / \opn{O}^+(M,h)$ are quasi-projective, the map $\varphi$ is a morphism of quasi-projective varieties \cite{borelmetric}.
By replacing $\opn{O}^+(M,h)$ with 
\[
\opn{Mon}^2(M, h):= \opn{Mon}^2(M) \cap \opn{O}^+(M, h),
\]
where $\opn{Mon}^2(M)$ is the group of Markman's monodromy operators  \cite{monok3ii, monok3, intk3}, we obtain the stronger result that $\varphi$ lifts to an open immersion $\widetilde{\varphi}: \mc_N \rightarrow \dc_L/\opn{Mon}^2(M, h)$  \cite[Theorem 3.10]{Handbook}.
\subsection{The modular group $\Gamma$}
From now on, as $\opn{O}^+(M, h)$ fixes $h \in M$, we regard $\opn{Mon}^2(M, h)$ as a subgroup of $\opn{O}(L)$ and use $\Gamma$ to denote the image of $\opn{Mon}^2(M, h)$ in $\opn{O}^+(L)$.  
\begin{proposition}\label{modgp}
  Suppose $h \in M$ is defined by a polarisation of split type and let $2d:=h^2>1$. 
Then,
  \begin{equation}\label{GammaDef}
    \Gamma = \{g \in \opn{O}^+(L) \mid \text{$gv^* = v^* + L$, $gw^* =\opn{det}(g) w^* + L$} \}
  \end{equation}
  where, if $v$ and $w$ are the respective generators of the $\la-2d \ra$ and $\la -2(n+1) \ra$ factors of $L$, then $v^*, w^* \in L^{\vee}$ are given by $v^* = \frac{-1}{2d} v$ and $w^* = \frac{-1}{2(n+1)} w$.
Furthermore, if $g \in \Gamma$ then $\chi_g(1) \equiv 0 \bmod{ 2d}$ and $\chi_g(\opn{det} g) \equiv 0 \bmod{ 2(n+1)}$, where $\chi_g$ is the characteristic polynomial of $g$.
\end{proposition}
\begin{proof}
Let $\{ e_1, f_1, e_2, f_2, e_3, f_3, w \}$ be a basis for $M$, where $\{e_i, f_i\}$ is a standard basis for the $i$-th copy of $U$ in $M$ and $w$ generates the $\la -2(n+1) \ra$ factor of $M$.
As $\opn{div}(h)=1$ then, by the Eichler criterion, we will assume that $h$ is given by $e_3 + d f_3 \in M$ and take $\{ e_1, f_1, e_2, f_2, v, w \}$
as a basis for the sublattice $L \subset M$, where $v=e_3 - df_3$.
If $\wc$ is the group
\[
\wc = \{g \in \opn{O}^+(M) \mid g_{\vert D(M)} = \pm \opn{id} \}
\]
and $\chi : \wc \rightarrow \{ \pm 1 \}$ is the character defined by the action of $\wc$ on $D(M)$ then, by \cite{Mongardi},
\begin{align}\label{mongp}
  \opn{Mon}^2(M)
  & =\opn{Ker}(\opn{det} \circ \chi) \\
  & = \{ g \in \opn{O}^+(M) \mid g(w^*) \equiv \opn{det}(g) w^* \bmod{ M} \}. \nonumber
\end{align}

We now characterise the inclusion $\opn{O}^+(L) \subset \opn{O}^+(M, h)$.
Following \cite[\S5]{Nikulin}, there is a series of finite abelian groups
\begin{equation}\label{finabseries}
  M/(L \op \la h \ra) \subset (L^{\vee}/ L) \op (\la h \ra^{\vee}/\la h \ra) = D(L) \op D(\la h \ra)
\end{equation}
corresponding to the series of lattice inclusions
\[
  L \op \la h \ra \subset M \subset M^{\vee} \subset L^{\vee} \op \la h \ra^{\vee}.
\]
If $H=M / (L \op \la h \ra )$, we let $P_L$ and $P_h$ denote the projections $P_L:H \rightarrow D(L)$ and $P_h: H \rightarrow D(\la h \ra)$ in \eqref{finabseries};
we define $H_h:=P_h(H)$, $H_{L}:=P_L(H)$ and the map $\gamma^M_{h,L}$ by  
\[
\gamma^M_{L, h}:=P_h \circ P_L^{-1}: H_{L} \rightarrow H_h.
\]
By \cite[Corollary 1.5.2]{Nikulin}, $g \in \opn{O}^+(L)$ extends to an element of $\opn{O}^+(M, h)$ if and only if $\gamma^M_{L, h} = \gamma^M_{L, h} \circ \overline{g}$ where $\overline{g}$ denotes the image of $g$ under the natural homomorphism $\opn{O}(L) \rightarrow \opn{O}(D(L))$.
As $e_3 = \frac{1}{2}(h+v)$ and $f_3 = \frac{1}{2d}(h-v)$ then $H$ is generated by the classes of $e_3$ and $f_3$.
We calculate  
$P_L(e_3)$ $\equiv \frac{1}{2} v$ $\equiv -d v^* \bmod{ L}$ and
$P_L(f_3)$ $\equiv \frac{-1}{2d}v$ $\equiv v^* \bmod{ L}$, implying $H_{M,L}=\la v^* \ra$.
Therefore, 
\begin{align*}
\gamma^M_{L, h}(v^*) = P_h(P_L^{-1}(v^*)) = P_h(f_3) & = P_h(\frac{1}{2d}(h-v)) \\
& \equiv \frac{h}{2d}   \equiv  h^* \bmod { \la h \ra}, 
\end{align*}
implying $g \in \opn{O}^+(L)$ extends to $\opn{O}^+(M, h)$ if and only if
\begin{equation}\label{gext}
h^* = \gamma^M_{L,h} (\overline{g}(v^*)).
\end{equation}
By \eqref{mongp}, if $g \in \opn{O}^+(L)$ extends to $\opn{Mon}^2(M, h)$ then $g(w^*) = \opn{det}(g) w^* \bmod{ M}$ and as $(v^*, w^*) = 0$, we have 
\begin{equation*}
  (v^*, w^*) = (gv^*, gw^*) = (gv^*, \opn{det}(g)w^* + M ) = (gv^*, w^*) = 0 \bmod{ \ZZ}.
\end{equation*}
Therefore, $gv^* = \alpha v^*$ for some $\alpha \in \ZZ$ taken modulo $2d$, and so $g \in \opn{O}^+(L)$ extends to $\opn{Mon}^2(M, h)$ if and only if $h^* \equiv \alpha h^* \bmod{ \la h \ra}$.
Therefore, by \eqref{gext}, $\alpha \equiv 1 \bmod{ 2d}$ and  the first part of the claim follows.

For the second part of the claim, as in \cite{Brasch}, we note that the matrix of $g$ is of the form 
\[
\begin{pmatrix}
  * & * & * & * & 2d* & 2(n+1)* \\
  * & * & * & * & 2d*  & 2(n+1)* \\
  * & * & * & * & 2d* & 2(n+1)* \\
  * & * & * & * & 2d* & 2(n+1)* \\
  * & * & * & * & 1 + 2d* & 2(n+1)* \\
  * & * & * & * & 2d* & \opn{det}g + 2(n+1)*\\
\end{pmatrix}
\] 
where $*$ denotes an element of $\ZZ$.
\end{proof}
 \section{Toroidal compactifications}\label{toroidalsec}
In order to prove Theorem \ref{lwcft} we need an explicit description of a toroidal compactification $\overline{\fc}(\Gamma)$ of $\fc(\Gamma)$.
We let $L$ denote a lattice of signature $(2,n)$ where $n \geq 2$ and we suppose $\Gamma \subset \opn{O}^+(L)$ is a subgroup of finite index.
\subsection{Construction of toroidal compactifications}
Toroidal compactifications are defined fully in the monograph \cite{AMRT}.
Our overview follows \cite{Kondo}, in which more details can be found.
Let $\overline{\dc}_L$ denote the closure of $\dc_L$ in the \emph{compact dual} $\check{\dc}_L := \{[w] \in \PP(L \otimes \CC) \mid (w,w)=0 \}$ of $\dc_L$.
A \emph{boundary component} (or a \emph{cusp}) $F$ of $\dc_L$ is defined as a maximal connected complex analytic subset of $\overline{\dc}_L \backslash \dc_L$.
A boundary component $F$ is said to be \emph{rational} if the stabiliser $N(F):=\{g \in G_{\RR} \mid g(F) \subset F \}$ is defined over $\QQ$, where $G_{\RR}$ is a connected component of $\opn{O}(L \otimes \RR)$.
We let $W(F)$ denote the unipotent radical of $N(F)$ and let $U(F)$ denote the centre of $W(F)$.
We will use a subscript $\CC$ to denote the complexification of a subgroup of $G_{\RR}$ and a subscript $\ZZ$ to denote the intersection of a group with $\Gamma$ (e.g. $N(F)_{\CC}$ and $U(F)_{\ZZ}$).
The stabiliser $N(F)$ is a maximal parabolic subgroup of $G_{\RR}$ and, conversely, every maximal parabolic subgroup of $G_{\RR}$ is the stabiliser of a boundary component, subject to the condition that $G_{\RR}$ is simple (which is true in our setting c.f. \cite[Proposition 3.9 \S III.3]{AMRT}).
As $\Gamma \subset \opn{O}(2, n)$, the maximal parabolic subgroups of $\Gamma$ are precisely the stabilisers of totally isotropic primitive sublattices $E \subset L$, with $\opn{rank}E = 1$ and $\opn{rank}E=2$ corresponding to boundary points and  boundary curves, respectively. (For more details see \cite{AMRT, borelji, Piatetski-Shapiro, Satake1, Handbook}.)
For each rational boundary component $F$, the group $U(F)$ contains a self-dual homogeneous cone $C(F)$ \cite[\S III.4]{AMRT}.
To construct a toroidal compactification $\overline{\fc}(\Gamma)$ of $\fc(\Gamma)$, we start by taking an admissible polyhedral decomposition $\{\sigma_i\}$ of $C(F)$ for each rational boundary component $F$ \cite[\S II.4]{AMRT}.
The fan $\{ \sigma_i \}$ defines a toric variety $T(F)_{\{\sigma_i \}} \supset T(F)$ where $T(F)$ is the torus $T(F):=U(F)_{\CC} / U(F)_{\ZZ}$.
If $\dc_L(F):=U(F)_{\CC}. \dc_L (\subset \check{\dc}_L)$, there exists a holomorphic isomorphism $\dc_L(F)  \cong U(F)_{\CC} \times \CC^m \times F$ where $m=0$ if $\opn{dim} F = 0$ and $m=n-2$ if $\opn{dim} F = 1$. 
We define $(\dc_L / U(F)_{\ZZ})_{\{\sigma_i \}}$ as the interior of the closure of $\dc_L/U(F)_{\ZZ}$ in $( \dc(F)/U(F)_{\ZZ}) \times_{T(F)} T(F)_{\{\sigma_i \}}$. By \cite[Theorem 5.2 \S III]{AMRT}, there exists a compact analytic space $\overline{\fc}(\Gamma)$ and a morphism
\[
  \pi_F: (\dc_L / U(F)_{\ZZ})_{\{\sigma_i \}}
  \rightarrow
  \overline{\fc}(\Gamma).
  \]
  The space $\overline{\fc}(\Gamma)$ has (at most) finite quotient singularities and the modular variety $\fc(\Gamma)$ is Zariski open in $\overline{\fc}(\Gamma)$.
  Each point of $\overline{\fc}(\Gamma)$ is contained in the image of $\pi_F$ or the projection $\pi: \dc_L \rightarrow \fc(\Gamma)$.
\subsection{Coordinates for $\overline{\fc}(\Gamma)$}
We introduce coordinates for $\overline{\fc}(\Gamma)$ in the neighbourhood of a boundary curve, following \cite{Kondo, GHSK3, scattone}.
Let $E \subset L$ be a primitive totally isotropic sublattice  corresponding to the boundary curve $F$.
As in \cite[Lemma 2.23]{GHSK3}, there exists a $\ZZ$-basis $\{e_i'\}_{i=1}^{n+2}$ for $L$ such that 
$\{e_1'$, $e_2'\}$ and 
$\{e_i'\}_{i=1}^n$
define $\ZZ$-bases for $E$ and $E^{\perp}$, respectively.
The Gram matrix of $\{e_i'\}_{i=1}^{n+2}$ is given by 
\begin{equation}\label{Q'def}
  Q'
  = (e_i', e_j')
  = \begin{pmatrix}
    0 & 0 & A \\
    0 & B & C \\
    {}^{\tau}A & {}^{\tau}C & D
  \end{pmatrix},
\end{equation}
where 
\begin{equation}\label{Adeltae}
  A =
  \begin{pmatrix}
    \delta & 0 \\
    0 & \delta e
  \end{pmatrix}
\end{equation}
and $\delta, e \in \ZZ$. 
As in \cite[Lemma 2.24]{GHSK3}, by applying the change of basis 
\begin{equation}\label{basechangeN}
  N =
  \begin{pmatrix}
    I & 0 & R' \\
    0 & I & R \\
    0 & 0 & I
  \end{pmatrix}
\end{equation}
to $\{e_i'\}$, where $R = -B^{-1} C$ and $R'$ satisfies
 $ D  - {}^{\tau} CB^{-1}C + {}^{\tau}R'A + {}^{\tau}AR' = 0$, we obtain a $\QQ$-basis $\{e_i\}_{i=1}^{n+2}$ for $L$ with Gram matrix
\begin{equation}\label{Qdef}
  Q :=
  (e_i, e_j)
  =
  \begin{pmatrix}
    0 & 0 & A \\
    0 & B & 0 \\
    A & 0 & 0
  \end{pmatrix}
\end{equation}
and $A, B$ as before. 
With respect to $\{e_i\}$, the groups $N(F)$, $W(F)$, $U(F)$ are given by \cite[Lemma 2.25]{GHSK3}  
\begin{align}                                     
  N(F) & =
  \left
  \{
  \begin{pmatrix}
    U & V & W \\
    0 & X & Y \\
    0 & 0 & Z
  \end{pmatrix}
  \mbox{\larger[30] $\mid$ }                                                                
  \begin{matrix}
    {}^t UAZ=A, {}^tXBX=B, {}^tXBY+{}^tVAZ=0,\\
    {}^t YBY + {}^t ZAW + {}^t WAZ=0,\
    \opn{det} U>0\end{matrix}\right\}, \label{nf} \\
  W(F) & =
  \left \{
  \begin{pmatrix}
    I & V & W \\
    0 & I & Y \\
    0 & 0 & I
  \end{pmatrix}
  \mbox{\larger[30] $\mid$ }
  BY+{}^tVA=0,\ {}^tYBY+AW+{}^tWA=0
  \right \}, \nonumber \\ U(F) & =
  \left\{
  \begin{pmatrix}
    I & 0 & \begin{pmatrix} 0 & ex \\ -x & 0\end{pmatrix} \\
      0 & I & 0\\
      0 & 0 & I
  \end{pmatrix}
  \mbox{\larger[30] $\mid$ }
  x \in \RR
  \right \}. \label{uf}
\end{align}
As in \cite{Kondo, GHSK3}, we define coordinates for $\dc_L(F)$.  
If $[t_1: \ldots : t_{n+2}]$ are homogeneous coordinates for $\PP(L \otimes \CC)$ on the basis $\{e_i\}$ then, 
\begin{align}\label{DLFcoord}
  \dc_L(F)
  & \cong U(F)_{\CC} \times \CC^m \times F \nonumber \\
  & = \{(z, \underline{w}, \tau) \in \CC \times \CC^m \times \HH^+ \} \\
  & = \{[z:t_2:\underline{w}:\tau:1] \in \PP(L \otimes \CC) \mid   2 \delta et_2
  =
  -2 \delta z \tau - \underline{w} B ^{\tau}\underline{w} \nonumber
\},
\end{align}
with $e$ and $\delta$ as in \eqref{Adeltae}.
By setting $u:=\opn{exp}_e(z):=e^{2 \pi i z / e}$ one obtains a coordinate for the torus $T(F) \cong \CC^*$ and the partial compactification at $F$ is obtained by allowing $u=0$ \cite[p.539]{GHSK3}.
Furthermore, for $g\in N(F)$ as in \eqref{nf} with
\begin{equation*}
  Z=
  \begin{pmatrix}
    a & b\\
    c & d
  \end{pmatrix},
\end{equation*}
the action of $g$ on $\dc_L(F)$ is given by  
\begin{equation}\label{gaction}
  \begin{cases}
  z  \mapsto 
  \frac{z}{\det Z} + (c \tau + d)^{-1} \left( \frac{c}{2 \delta \det Z}{}^t \underline{w} B \underline{w}
  +\underline{V}_1\underline{w} + W_{11} \tau + W_{12} \right) \\
  \underline{w}  \mapsto  (c\tau+d)^{-1}
  ( X \underline{w} + Y 
  \bsm
    \tau \\ 1
    \esm
    )\\
    \tau  \mapsto  (a \tau + b)/(c \tau + d),
  \end{cases}
\end{equation}
where ${\underline V}_i$ is the $i$-th row of the matrix $V$ in \eqref{nf} \cite[Proposition 2.26]{GHSK3}\cite[p.259]{Kondo}.
\subsection{The low-weight cusp form trick}
We construct pluricanonical forms $\Omega(f)$ on a smooth projective model $\widetilde{\fc}(\Gamma)$ for $\fc(\Gamma)$ from modular forms $f$.
The modular forms $f$ must lie outside obstruction spaces $\opn{EllObs}$ and  $\opn{RefObs}$, which we define below.
\begin{defn}[{\hspace{1sp}\cite[p.495]{Handbook}}]
  Let $\Gamma \subset \opn{O}^+(L)$ be a subgroup of finite index with character $\chi:\Gamma \rightarrow \CC^*$ and let  $\dc_L^{\bullet}$ be the affine cone of $\dc_L$.
  A \textbf{modular form $f$ of weight $k$ and character $\chi$} for $\Gamma$ is a holomorphic function $f:\dc_L^{\bullet} \rightarrow \CC$ such that, for all $Z \in \dc_L^{\bullet}$, both
  \begin{enumerate}
  \item $f(gZ) = \chi(g) f(Z)$ for all $g \in \Gamma$ 
  \item $f(tZ) = t^{-k} f(Z)$ for all $t \in \CC^*$
  \end{enumerate}
  are satisfied. 
  We denote the space of all such modular forms by $M_k(\Gamma, \chi)$ and the subspace of cusp forms by $S_k(\Gamma, \chi)$.
\end{defn}
For $v \in L^{\vee}$ with $v^2<0$, the \emph{rational quadratic divisor} $\dc_L(v) \subset \dc_L$ is defined by
\[
  \dc_L(v) = \{[x] \in \dc_L \mid (x, v)=0 \}.
  \]
We define the space of \emph{reflective obstructions}
\begin{equation}\label{RefObsdef}
\opn{RefObs}_{(4-a)k}(\Gamma, \chi_2) \subset M_{(4-a)k}(\Gamma, \chi_2)
\end{equation}
as the space of weight $(4-a)k$ modular forms vanishing to order $<k$ on $\dc_L(v) \subset \dc_L$ for all $\pm \sigma_v \in \Gamma$.
If $\ec$ is a set of primitive embeddings of signature $(2,3)$ lattices in $L$, the space of \emph{elliptic obstructions} 
\begin{equation}\label{EllObsdef}
\opn{EllObs}_{(4-a)k}(\Gamma, \ec, \chi_2) \subset M_{(4-a)k}(\Gamma, \chi_2)
\end{equation}
is defined as the space of weight $(4-a)k$ modular forms vanishing to order $<2k$ on all rational quadratic divisors $\dc_L(K^{\perp})$ for $(K \hookrightarrow L) \in \ec$.
In later sections, we will mostly be concerned with the growth of  $\opn{EllObs}_{(4-a)k}(\Gamma, \ec, \chi_2)$, $\opn{RefObs}_{(4-a)k}(\Gamma)$ and $M_k(\Gamma, \chi)$ as $k \rightarrow \infty$: as this is independent of the character $\chi_2$, we will typically ignore it, writing $\opn{EllObs}_{(4-a)k}(\Gamma, \ec)$, $\opn{RefObs}_{(4-a)k}(\Gamma)$ and $M_k(\Gamma)$ instead.

\sloppy
In Theorem \ref{lwcft}, we will assume that $\ec$ satisfies the conditions of Proposition \ref{intsingextprop} and \ref{bcextprop}.
We will construct suitable $\ec$ in Lemma \ref{eclem}.
\begin{thm}\label{lwcft}
  Assume $\ec$ satisfies the conditions of Proposition \ref{intsingextprop} and \ref{bcextprop}. 
  Suppose that $0 \neq f_a \in S_a(\Gamma, \chi_1)$ for $0<a<4$ and $g \in M_{(4-a)k}(\Gamma, (\opn{det}^k)\chi_1^{-k} )$
  lies outside the obstruction spaces
  $\opn{EllObs}_{(4-a)k}(\Gamma, \ec, (\opn{det}^k)\chi_1^{-k} )$
  and
  $\opn{RefObs}_{(4-a)k}(\Gamma, (\opn{det}^k)\chi_1^{-k})$.
  Let 
  \[
    \begin{array}{ccc}
      f:=f_a^k g \in M_{4k}(\Gamma, \opn{det}^k) 
      & \text{and} &
      \Omega(f):=f \omega^{\otimes k}
    \end{array}
  \] 
  where $\omega$ is a holomorphic volume form for $\dc_L$.
  Then there exists a toroidal compactification $\overline{\fc}(\Gamma)$ of $\fc(\Gamma)$ and a desingularisation $\widetilde{\fc}(\Gamma)$ of $\overline{\fc}(\Gamma)$ such that for all $f$ as above, the $\Gamma$-invariant differential form $\Omega(f)$ defines a pluricanonical form on $\widetilde{\fc}(\Gamma)$.
\end{thm}
\begin{proof}
  The argument follows \cite[Theorem 1.1]{GHSK3}. 
Take a toroidal compactification $\overline{\fc}(\Gamma)$ of $\fc(\Gamma)$ and a resolution of singularities $\widetilde{\fc}(\Gamma) \rightarrow \overline{\fc}(\Gamma)$.
  By Corollary 2.21/Corollary 2.22 of \cite{GHSK3} (and the correction of \cite{Ma}), we will assume all singularities of $\overline{\fc}(\Gamma)$ are canonical above a 0-dimensional boundary component.
  If $\omega$ is a holomorphic volume form on $\dc_L$ then, as $f \in M_{4k}(\Gamma, \opn{det}^k)$, the differential form $\Omega(f):=f \omega^{\otimes k}$ is $\Gamma$-invariant on $\dc_L$ and defines a section of the pluricanonical bundle $kK_{\fc(\Gamma)}$ away from the cusps and branch locus. 
  By \cite[Corollary 2.13]{GHSK3}, the smooth part of the branch locus of $\pi: \dc_L \rightarrow \fc(\Gamma)$ coincides with the fixed locus of elements $\pm \sigma_v \in \Gamma$ and, by the results of \cite{MaIrregularCusps}, there are no fixed divisors in the boundary of $\overline{\fc}(\Gamma)$.
  Therefore, as $f$ vanishes to order $k$ along the fixed loci of quasi-reflections, $\Omega(f)$ defines a pluricanonical form away from the singularities and cusps of $\fc(\Gamma)$.
  By \cite[Theorem 1.1 p.192]{AMRT}, as $f_a$ is a cusp form, then $f$ has zeros of order $k$ along the boundary divisor of $\overline{\fc}(\Gamma)$.
Therefore  $\Omega(f)$ defines a pluricanonical form on the smooth locus of $\overline{\fc}(\Gamma)$.
  By Proposition \ref{intsingextprop} and \ref{bcextprop}, $\Omega(f)$ defines a pluricanonical form over all non-canonical singularities of $\overline{\fc}(\Gamma)$, from which the result follows.
\end{proof}
 \section{Non-canonical singularities in $\overline{\fc}(\Gamma)$}\label{SingsSec}
In this section, 
we classify non-canonical singularities in arbitrary orthogonal modular fourfolds; 
study elliptic elements of $\Gamma$ in terms of their invariant lattices;
calculate elliptic elements in $\Gamma$ defining non-canonical singularities in $\fc(\Gamma)$;
and study non-canonical singularities in the boundary of $\overline{\fc}(\Gamma)$.
\subsection{Non-canonical singularities in orthogonal modular fourfolds}
We classify non-canonical singularities in arbitrary orthogonal modular fourfolds $\fc(\Delta)$ where $\Delta \subset \opn{O}^+(N)$ and $N$ is a lattice of signature $(2,4)$.
For $A, B \in M_n(\CC)$ we use $A \sim B$ to denote $A={}^gB=gBg^{-1}$ for some $g \in \opn{GL}(n, \CC)$.
For $m \in \NN$, we use $\frac{1}{m}(a_1, \ldots, a_n)$ to denote the diagonal matrix $\opn{diag}(\xi_m^{a_1}, \ldots, \xi_m^{a_n})$  where
$a_i \in \ZZ$  are taken modulo $m$ and
$\xi_m:=e^{2 \pi i / m}$.
Where no confusion is likely to arise, we will also use $\frac{1}{m}(a_1, \ldots, a_n)$ to denote the quotient $\CC^n / \la g \ra$.

As the singularities of $\overline{\fc}(\Gamma)$ are finite quotient singularities, they can be studied using the Reid-Tai criterion \cite{YPG, Tai}, as in \cite{Kondo, GHSK3}.
For $g \sim \frac{1}{m}(a_1, \ldots, a_n)$, the \emph{Reid-Tai sum} $\Sigma(g)$ is defined as
\[
\Sigma(g) =
\begin{cases}
  \sum \left \{ \frac{a_i}{m} \right \} & \text{if $g \neq I$} \\
  1 & \text{if $g=I$}
\end{cases}
\]
where $\{ x \} := x - [x]$ denotes the fractional part of $x \in \QQ$.
The \emph{Reid-Tai criterion} states that if $G \subset \opn{GL}(n, \CC)$ is a finite group containing no quasi-reflections (i.e. no elements with precisely $n-1$ eigenvalues equal to 1) then $\CC^n / G$ has canonical singularities \cite{YPG} if and only if $\Sigma(g) \geq 1$ for all $g \in G$ \cite{ReidCanonical3-folds, YPG,  Tai}. 
If $G$ contains quasi-reflections, there is a modified version of the Reid-Tai criterion due to Katharina Ludwig \cite[Proposition 5.24]{Handbook}.
If $g \sim \frac{1}{m}(a_1, \ldots, a_n)$ and $m=st$ where $t$ is the smallest positive integer such that $g^t$ is the identity or a quasi-reflection and $\xi_m^{k a_i}=1$ for all $1 \leq i \leq n-1$ where $\xi_m^{k a_n}$ is a primitive $s$-th root of unity, then the \emph{modified Reid-Tai sum} $\Sigma'(g)$ is defined by
\[
\Sigma'(g) =
\begin{cases}
  \left \{ \frac{s a_n}{n} \right \} + \sum_{i=1}^{n-1} \left \{ \frac{a_i}{m} \right \} & \text{if $g \neq I$} \\
  1 & \text{if $g=I$.}
\end{cases}
\]
The \emph{modified Reid-Tai criterion} states that $\CC^n / G$ has canonical singularities if $\Sigma'(g) \geq 1$ for all $g \in G$.

Around the image of $[w] \in \dc_N$, the modular variety $\fc(\Delta)$ is locally isomorphic to $T_{[w]} \dc_N / \opn{Iso}_{\Delta}([w])$ where $T_{[w]} \dc_N$ is the tangent space of $\dc_N$ at $[w]$ and $\opn{Iso}_{\Delta}([w])$ is the isotropy subgroup
\[ 
\opn{Iso}_{\Delta}([w]) := \{ g \in \Delta \mid g.[w] = [w] \}.
\]
As in \cite{GHSK3}, if $\WW:= \CC.w \subset N \otimes \CC$, there is an isomorphism
\begin{equation*}T_{[w]} \dc_N \cong \opn{Hom}(\WW, \WW^{\perp}/\WW)
\end{equation*}
and a character $\alpha:\opn{Iso}_{\Delta}([w]) \rightarrow \CC^*$ defined by the action of $\opn{Iso}_{\Delta}([w])$ on $\WW$.
We refer to $[w] \in \dc_N$ as a \emph{canonical singularity of $\fc(\Delta)$} if $T_{[w]} \dc_N/ \opn{Iso}_{\Delta}([w])$ has only canonical singularities; otherwise, we refer to $[w]$ as a  \emph{non-canonical singularity of $\fc(\Delta)$}.
\begin{lemma}\label{NCOM4}
  Suppose $g \in \Gamma$ fixes $[w] \in \dc_N$.
  Then
  \begin{equation}\label{TwSing}
    T_{[w]} \dc_N / \la g \ra
  \end{equation}
  is a non-canonical singularity if and only if the isomorphism class of \eqref{TwSing} is given in Table \ref{SingTable}. 
  The corresponding values of $\alpha(g)$ and the characteristic polynomial $\chi_g(x)$ are also given in Table \ref{SingTable}, where $\phi_r$ denotes the $r$-th cyclotomic polynomial.
\end{lemma}
\begin{proof}
  We study rational representations of the cyclic group using the Reid-Tai criterion, as in \cite{Kondo, GHSK3}.
Suppose $g \in \opn{Iso}_{\Delta}([w])$ is of order $m$,
  $g^k$ is of prime power order $p^r$
  and let  
  \[
  N \otimes \QQ \cong \bigoplus_i \vc_{p^{r_i}},
  \]
  be the decomposition of $N \otimes \QQ$ as a $g^k$-module, where $\vc_m$ is the unique faithful $\QQ$-irreducible representation of the cyclic group $C_m$ \cite{SerreLin} and $r_1 \leq \ldots \leq r_n=r$.
  As $\opn{deg} \vc_{p^{r_i}} = \phi(p^{r_i}) = p^{r_i}-1 \leq \opn{dim} N_{\QQ} = 6$ then $p^r \in \{1,2,3,4,5,7,8,9\}$ and $m \leq 5.7.8.9 = 2520$.
  (In fact, $m \in  \{1,$ $2,$ $3,$ $4,$ $5,$ $6,$ $7,$ $8,$ $9,$ $10,$ $12,$ $14,$ $15,$ $18,$ $20,$ $24,$ $30\}$.)
  We then proceed to perform an exhaustive computer search using the modified Reid-Tai criterion over all faithful 6-dimensional $\QQ$-representations of $C_m$ for $m \leq 2520$.
  (Our search also returned the false-positives
  $h= \frac{1}{6}(0,0,2,3)$
  and
  $\frac{1}{6}(0,0,3,4)$:
  if such $h$ were to exist then $h^2$ and $h^3$ both define quasi-reflections  of order $\neq 2$, contradicting Corollary 2.13 of \cite{GHSK3}.) 
  The result follows.
\end{proof}
\begin{table}\centering
  \begin{tabular}{|c|c|c||c|c|c|}
    \hline 
    \textbf{Singularity} & $\chi_g$ & $\alpha(g)$ &  \textbf{Singularity} & $\chi_g$ & $\alpha(g)$  \\ 
    \hline
    & & & & & \\[-1em]
    $\frac{1}{3}(0,0,1,1)$ & $\phi_{3}^{3} $& $\xi_{3}^{\pm 1}$ &

    $\frac{1}{3}(0,0,1,1)$ & $\phi_{6}^{3} $& $\xi_{6}^{\pm 1}$ \\
    & & & & & \\[-1em]

    $\frac{1}{6}(0,0,2,2)$ & $\phi_{3} \phi_{6}^{2} $& $\xi_{6}^{\pm 2}$ &  
    
    $\frac{1}{6}(0,0,2,2)$ & $\phi_{3} \phi_{6}^{2} $& $\xi_6^{\pm 1}$ \\
    & & & & & \\[-1em]
      
    $\frac{1}{6}(0,1,1,2)$ & $\phi_{2}^{2} \phi_{3}^{2} $& $\xi_{6}^{\pm 2}$ &  
    
    $\frac{1}{6}(1,1,1,1)$ & $\phi_{2}^{4} \phi_{3} $& $\xi_{6}^{\pm 2}$ \\
      & & & & & \\[-1em]

    $\frac{1}{6}(0,1,1,2)$ & $\phi_{1}^{2} \phi_{6}^{2} $& $\xi_{6}^{\pm 1}$ &

    $\frac{1}{12}(0,2,2,4)$ & $\phi_{4} \phi_{6}^{2} $& $\xi_{12}^{\pm 2}$ \\
      & & & & & \\[-1em]

    $\frac{1}{12}(2,2,2,2)$ & $\phi_{4}^{2} \phi_{6} $& $\xi_{12}^{\pm 2}$ &  
    
    $\frac{1}{12}(0,2,2,4)$ & $\phi_{3} \phi_{4} \phi_{6} $& $\xi_{12}^{\pm 4}$ \\
    & & & & & \\[-1em]

    $\frac{1}{12}(0,2,2,4)$ & $\phi_{3} \phi_{4} \phi_{6} $& $\xi_{12}^{\pm 2}$ &  
    
    $\frac{1}{12}(2,2,2,2)$ & $\phi_{3} \phi_{4}^{2} $& $\xi_{12}^{\pm 4}$ \\
      & & & & & \\[-1em]

    $\frac{1}{12}(0,2,2,4)$ & $\phi_{3}^{2} \phi_{4} $& $\xi_{12}^{\pm 4}$ &  
    
    $\frac{1}{24}(4,4,4,4)$ & $\phi_{6} \phi_{8} $& $\xi_{24}^{\pm 4}$ \\ 
    & & & & & \\[-1em]
    
    $\frac{1}{24}(4,4,4,4)$ & $\phi_{3} \phi_{8} $& $\xi_{24}^{\pm 8}$ & 
    
    $\frac{1}{30}(5,5,5,5)$ & $\phi_{5} \phi_{6} $& $\xi_{30}^{\pm 5}$ \\
      & & & & & \\[-1em]  
    
    $\frac{1}{30}(5,5,5,5)$ & $\phi_{3} \phi_{10} $& $\xi_{30}^{\pm 10}$       & & & \\  
    \hline
  \end{tabular}
  \caption{Non-canonical cyclic quotient singularities in orthogonal modular 4-folds.} \label{SingTable}
\end{table}
\subsection{Elliptic elements of $\Gamma$ and $p$-elementary lattices}
Suppose $L$ and $\Gamma$ are as in \eqref{ldef} and \eqref{GammaDef} and let $M$ denote the Beauville lattice.
We now classify elliptic elements of $\Gamma$ in terms of their invariant and perp-invariant lattices.
For $g \in \opn{O}(N)$ where $N$ is an arbitrary lattice, we define the \emph{invariant} and \emph{perp-invariant} lattices of $g$ by
\begin{center}
  \begin{tabular}{c c c}
    $T := \{ x \in N \mid gx=x \}$ & and &   $S:=T^{\perp} \subset N$,
  \end{tabular}
  \end{center}
respectively.
We recall that a lattice $N$ is said to be \emph{$p$-elementary} if $D(N) \cong C_p^{\op a}$ where $C_p$ is the cyclic group of prime order $p$.
A partial classification of the genera of $p$-elementary lattices is given in \cite{RudakovShafarevich1, RudakovShafarevich2} (see also \cite{SPLAG}).
${}$
\\
\\
\noindent \textbf{Assumption:} from now on, we will assume (unless otherwise stated) that the parameters $n$ and $d$ defining the lattice $L$ satisfy $n=2$ and $d>48$.
${}$
\\
\begin{proposition}\label{pelemProp}
  Suppose $g \in \opn{O}(M)$ is of prime order $p$.  
  \begin{enumerate}
  \item If $p=2$ then $D(S)$ or $D(T)$ is $2$-elementary.
  \item If $3 \leq p \leq 19$ then $D(S)$ is $p$-elementary.
  \end{enumerate}
\end{proposition}
\begin{proof}
  We follow \cite[Lemma 5.5]{SmithTheory}.
  (We also proved a version of this result in the unpublished  thesis  \cite{DawesThesis}.)
If $N$ is the $g$-module
\[
N = \frac{M}{S \op T}
\]
then, by \cite[Lemma 4.3/Remark 4.4]{SmithTheory}, there exists $a \in \ZZ$ such that $N \cong C_p^{\op a}$.
By standard results on overlattices (e.g. \cite[Lemma 2.1, Chapter I.1]{CCS}),
\[
\vert M : S \op T \vert^2 = \opn{det}(S) \opn{det}(T) \opn{det}(M)^{-1}
\]
and so
\begin{equation}\label{detTdetS}
\opn{det}(S) \opn{det}(T) = 6p^{2a},
\end{equation}
implying $\opn{det}(S) = 2^{\delta} 3^{\epsilon} p^{\alpha}$ and $\opn{det}(T) = 2^{1-\delta}3^{1-\epsilon}p^{\beta}$ where $\alpha+\beta = 2a$ and $\epsilon, \delta \in \{0, 1 \}$.

We now show that all elementary factors of $D(S)$ and $D(T)$ are isomorphic to $C_2$, $C_3$ or $C_p$. 
As in \S5 of \cite{Nikulin}, the overlattices
\[
S \op T \subset M \subset M^{\vee} \subset S^{\vee} \op T^{\vee}
\]
define an inclusion of finite abelian groups
\[
N \subset D(S) \op D(T)
\]
and natural projections $p_S:N \rightarrow D(S)$ and $p_T:N \rightarrow D(T)$.
As both $S,T \subset L$ are primitive then both $p_S$ and $p_T$ are injective \cite[p.111]{Nikulin} and so $C_p^{\op a} \subset D(S)$ and $D(T)$.
Therefore, if $p=2$ then $a \leq \alpha + \delta$ and $a \leq \beta + (1-\delta)$. 
As $\alpha + \beta = 2a$ then $a - \delta \leq \alpha \leq a + (1-\delta)$ and so $\alpha + \delta = a$ or $a+1$.
Then, as $N \subset D(S)$ and by using the classification of finite abelian groups, $D(S) = C_2^{\op a} \op C_3^{\epsilon}$ or $D(S) = C_4 \op C_2^{\op (a-1) } \op C_3^{\epsilon}$ if $\delta = 1$.
The latter case cannot occur: as $D(S) \subset D(T) \op D(M) / G$ where $G \subset D(T) \op D(M)$ then, if $\delta=1$ we have $D(T) \cong C_2^{\op a} \op C_3^{\op (1-\epsilon)}$, implying $D(T)$ and so $D(S)$ contain no 4-torsion.
Similarly, if $p=3$ then $D(S) = C_2^{\delta} \op C_3^{\op(a+\epsilon)}$.
If $p>3$ then, as $a \leq \alpha$, $a \leq \beta$ and $2a= \alpha + \beta$ we have $\alpha=\beta=a$ and so $D(S) = C_2^{\op \delta} \op C_3^{\epsilon} \op C_p^{\op a}$.

We now prove the main result.
Let $x_2, x_3 \in S^{\vee}$ represent generators for $C_2^{\op \delta}, C_3^{\op (a + \epsilon)} \subset D(S)$, respectively (if they exist) and define $\sigma:=1 + g + \ldots + g^{p-1}$.  
If $p=2$ then, by \eqref{detTdetS}, $\opn{det}(T)$ or $\opn{det}(S)$ is coprime to 3 and so $S$ or $T$ is $2$-elementary.
If $p=3$ and $\delta=0$ then, as $D(S)$ contains a single element of order 2, $g \overline{x_2} = \overline{x_2}$ and so $\sigma(\overline{x_2}) = \overline{x_2}$.
On the other hand, from the $\QQ$-representation theory of the cyclic group, $\sigma$ is identically zero on $S$, implying the contradiction $\overline{x_2}=0$.
Similarly, if $p>3$ then $g \overline{x_2} = \overline{x_2}$ and $g \overline{x_3} = \pm \overline{x_3}$.
Therefore, $\sigma(\overline{x_2})=p \overline{x_2}$ and $\sigma(\overline{x_3}) = \overline{x_3}$ or $p\overline{x_3}$.
However, as $\sigma$ vanishes on $S$ and $(2,p)=(3,p)=1$ then both $\overline{x_2}$ and $\overline{x_3}$ must be zero.
The result follows.
\end{proof}
\begin{lemma}\label{perpinvrest}
  Let $g \in \opn{O}(M, h)$ where $h^2>0$ and let $g'$ be the restriction of $g$ to the sublattice $L=h^{\perp} \subset M$.
  If $S \subset M$ is the perp-invariant lattice of $g$ and $S' \subset L$ is the perp-invariant lattice of $g'$ then $S=S'$.
\end{lemma}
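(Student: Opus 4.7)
The plan is to prove $S=S'$ by showing both inclusions directly, the main tool being that $g$ fixes $h$ so $h$ lies in the invariant lattice $T \subset M$.

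First I would observe that $T=\{x\in M\mid gx=x\}$ contains $h$ (since $g\in\opn{O}(M,h)$), hence $S=T^{\perp}\subset h^{\perp}=L$. Therefore the question of whether $S=S'$ makes sense inside $L$. Let $T'=\{x\in L\mid g'x=x\}$ be the invariant lattice of $g'$, so that $T'=T\cap L$ and $S'=T'^{\perp}$ taken inside $L$.

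Next I would verify $S\subset S'$ in one line: any $s\in S$ satisfies $s\in L$ (shown above) and $(s,t')=0$ for every $t'\in T'\subset T$, so $s\in T'^{\perp}\cap L=S'$.

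For the reverse inclusion $S'\subset S$, I would use the decomposition over $\QQ$. Since $h$ is non-isotropic and fixed by $g$, the $g_{\QQ}$-invariant subspace satisfies
\begin{equation*}
T\otimes\QQ=(T\otimes\QQ\cap L_{\QQ})\oplus\QQ h=(T'\otimes\QQ)\oplus\QQ h,
\end{equation*}
because $T\otimes\QQ\cap L_{\QQ}=\opn{ker}(g_{\QQ}-1)\cap L_{\QQ}=\opn{ker}(g'_{\QQ}-1)=T'\otimes\QQ$. Then for $s'\in S'$ and $t\in T$, write $t=t_L+\alpha h$ with $t_L\in T'\otimes\QQ$ and $\alpha\in\QQ$; since $s'\in L$ and $s'\in T'^{\perp}$ (extended to $\QQ$-coefficients), one finds $(s',t)=(s',t_L)+\alpha(s',h)=0$. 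Thus $s'\in T^{\perp}=S$.

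The argument has no real obstacle; the only subtlety is that in the decomposition of $t\in T$ along $L$ and $\QQ h$ the coefficient $\alpha=(t,h)/h^{2}$ need not be integral, so one must work in $T\otimes\QQ$ rather than $T$, and exploit that bilinear orthogonality between $L$ and $T'$ automatically extends to the $\QQ$-span $T'\otimes\QQ$. With that minor care the two inclusions combine to give $S=S'$.
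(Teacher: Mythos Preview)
Your proof is correct and uses essentially the same idea as the paper: both arguments hinge on the observation that, over $\QQ$, the invariant space $T\otimes\QQ$ splits as $\QQ h \oplus (T\cap L)\otimes\QQ$, so that orthogonality to $T'$ inside $L$ forces orthogonality to all of $T$. The paper packages this as a comparison of two $g'$-module decompositions of $L_{\QQ}$ and then invokes primitivity of $S$, whereas you argue the two inclusions $S\subset S'$ and $S'\subset S$ directly; your route is marginally more elementary since it avoids the appeal to primitivity, but the core content is the same.
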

\begin{proof}
  If $T'$ is the invariant lattice of $g'$ then
\[
  S'
     = \{ x \in L \mid x \perp T' \} 
     = \{x \in L \mid x \perp T \} 
     = \{ x \in M \mid \text{$x \perp h$, $x \perp T$} \}
\]
and as $h \in T$ then 
$
S '
= \{ x \in M \mid x \perp T \} 
= S.
$
\end{proof}
\begin{lemma}\label{noncanonS}
  If $[w] \in \dc_L$ is a non-canonical singularity of $\fc(\Gamma)$ then $[w]$ is fixed by an element $g \in \Gamma$ of order 3 with characteristic polynomial $\chi_g = \phi_1^2 \phi_3^2$
  and perp-invariant lattice $2U$ or $U \op A_2(-1)$.   
\end{lemma}
\begin{proof}
  By Lemma \ref{NCOM4}, $[w]$ is fixed by an element $h \in \Gamma$ such that $h^k=:g$ is of order 3 for some $k$.
  By eliminating cases using Proposition \ref{modgp},  we can assume that the characteristic polynomial $\chi_h = \phi_1^2 \phi_6^2$.
  For $l \in \ZZ$, let $T_l, S_l \subset L$ denote the invariant and perp-invariant lattices of $h^l$, respectively.
  By Lemmas \ref{pelemProp} and \ref{perpinvrest}, $D(S_2) \cong C_3^a$ for some $a \in \NN$.
  The genera of 3-elementary lattices are classified in \cite[Theorem 13, p.386]{SPLAG}.
  When $a \leq 4$, each genus of 3-elementary lattices contains at most one class, which follows from \cite[Corollary 22, p.395]{SPLAG} in the indefinite case and from \cite[Table 15.1, p.360]{SPLAG} and \cite{Nipp} in the definite case.
  Therefore, as $L$ is not 3-elementary, then any 3-elementary lattice admitting a primitive embedding in $L$ must be one of 
  $2U$,
  $U \op U(3)$,
  $2U(3)$,
  $U \op A_2(-1)$,
  $U(3) \op A_2(-1)$,
  $U$,
  $U(3)$,
  $A_2 (\pm 1)$  or
  $2A_2(-1)$.
  By considering $\chi_h$ and the action of $h$ on $L \otimes \QQ$, we have $S_2 = S_3$ and $\opn{rank} S_2 = 4$.
  By Lemma \ref{pelemProp}, $S_2$ is 3-elementary and either $S_3$ or $T_3$ is 2-elementary.
  If $S_3$ is 2-elementary, then $S_3=2U$; 
  if $S_3$ is not 2-elementary then $S_3$ is 3-elementary and so, in the notation of Proposition \ref{pelemProp}, $\delta=0$, $a=0$ and $\epsilon \in \{0, 1 \}$, implying $S_3 = 2U$ or $U \op A_3(\pm 1)$. 
\end{proof}
\subsection{Elliptic elements of $\Gamma$} 
We now classify elliptic elements of $\Gamma$ defining non-canonical singularities in $\fc(\Gamma)$.
\begin{lemma}\label{UA2_3tors}
  The group $\opn{O}(1,3)$ contains no element of order 3 with characteristic polynomial $\phi_3^2$.
\end{lemma}
\begin{proof}
  We note that if $h \in \opn{O}(1,3)$ is of order 3 then $h \in \opn{SO}^+(1,3)$. 
  Following \cite[p.55]{CarterSegalMacdonald}, we construct a double cover homomorphism $\opn{SL}(2, \CC) \rightarrow \opn{SO}^+(1,3)$ by identifying $\RR^4$ with the $2 \times 2$ Hermitian matrices $\Hf$ via 
  \[
  \RR^4 \ni (t,x,y,z) \mapsto
  \bpm
  t + x & y - iz \\
  y + iz & t - x
  \epm \in \Hf,  
  \]
  allowing $\opn{det}$ to define a signature $(1,3)$ quadratic form on $\Hf^4$ and mapping each $g \in \opn{SL}(2, \CC)$ to $T_g \in \opn{SO}^+(1,3)$, where $T_g: A \mapsto g A g^{-1}$ for $A \in \Hf$.

  If $g \in \opn{SL}(2, \CC)$ is such that $T_g$ is of order 3,
  then $g$ is of finite order with conjugate eigenvalues $\alpha$ and $\overline{\alpha}$.
  Without loss of generality, we can assume $\alpha = e^{2 \pi i /3}$ and $g = \opn{diag}(\alpha, \overline{\alpha})$, implying 
  \[
  T_g =
  \bpm
  1 & 0 & 0 & 0 \\
  0 & 1 & 0 & 0 \\
  0 & 0 & -\frac{1}{2} & -\frac{\sqrt{3}}{2} \\
  0 & 0 & \frac{\sqrt{3}}{2} & -\frac{1}{2}
  \epm
  \]
  and $\chi_{T_g} = \phi_3 \phi_1^2$.
\end{proof}
\begin{deflem}\label{SL2emb}
  There is an isomorphism 
  \[
    \Delta:  (\opn{SL}(2, \ZZ) \times \opn{SL}(2, \ZZ))/\{\pm (I, I) \} \rightarrow \opn{SO}^+(2U)
  \]
  defined on a standard basis of $2U$ by
  \begin{center}
    \begin{tabular}{c c c}
      $\Delta(A,I) \mapsto
      \begin{pmatrix}
        a & 0 & 0 & b \\
        0 & d & -c & 0 \\
        0 & -b & a & 0 \\
        c & 0 & 0 & d
      \end{pmatrix}$
      & 
      and
      &
      $  \Delta(I, A) \mapsto
      \begin{pmatrix}
        d & 0 & c & 0 \\
        0 & a & 0 & -b \\
        b & 0 & a & 0 \\
        0 & -c & 0 & d
      \end{pmatrix}$
    \end{tabular}
  \end{center}
  where 
  \begin{equation*}
    A =
    \begin{pmatrix}
      a & b \\
      c & d \\
    \end{pmatrix} \in \opn{SL}(2, \ZZ).
  \end{equation*}
\end{deflem}
\begin{proof}
  The isomorphism is well known (e.g. \cite{sterk}): we adopt the notation of \cite{Brasch} and construct the homomorphism following the approach of \cite{abelianisation}.
Identify $2U$ with the matrices $M_2(\ZZ)$ via the map
\[
  U \op U
    \ni (w,x,y,z)
    \mapsto 
    \begin{pmatrix}
      w & -y \\
      z & x \\
    \end{pmatrix}
    \in M_2(\ZZ),
\]
and let $-\opn{det}$ define a quadratic form on $M_2(\ZZ)$. 
Then, if $(A, B) \in \opn{SL}(2, \ZZ) \times \opn{SL}(2, \ZZ)$, the element $\Delta(A, B) \in \opn{O}^+(2U)$ is defined by 
\[
\Delta(A, B):
    \begin{pmatrix}
      w & -y \\
      z & x \\
    \end{pmatrix}
    \mapsto
    A
    \begin{pmatrix}
      w & -y \\
      z & x \\
    \end{pmatrix}
    B^{-1}.
    \]
\end{proof}
\begin{lemma}\label{2Utorsion}
  Let $g \in \opn{O}^+(2U)$ be of order 3 with characteristic polynomial $\chi_g=\phi_3^2$.
  Then, up to $\opn{O}^+(2U)$-conjugacy, $g$ is represented by  $\Delta(I, g_3)^{\pm 1}$ or $\Delta(g_3, I)^{\pm 1}$, where
\[
  g_3 =
    \begin{pmatrix}
      0 & 1 \\
      -1 & -1 \\
    \end{pmatrix}.
 \]
\end{lemma}
\begin{proof}
  We apply Lemma \ref{SL2emb}.
  If $g \in \opn{O}^+(2U)$ is of order 3 then $g \in \opn{SO}^+(2U)$.
  The conjugacy classes of 3-torsion in $\opn{SL}(2, \ZZ)$ are represented by $g_3^{\pm 1}$ \cite{Serre} where
\[
  g_3 =
    \begin{pmatrix}
      0 & 1 \\
      -1 & -1 \\
    \end{pmatrix}.
    \]
    The result follows by calculating the characteristic polynomials of
    $\Delta(g_3, I)^{\pm 1}$, 
    $\Delta(I, g_3)^{\pm 1}$, 
    $\Delta(g_3, g_3)^{\pm 1}$ and
    $\Delta(g_3^{-1}, g_3)^{\pm 1}$.
\end{proof}
\begin{defn}
  Every embedding
  $\iota: 2U \hookrightarrow L$ with $B := (2U)^{\perp} \subset L$
  defines an embedding
  $\Delta_{\iota}: \opn{SL}(2, \ZZ) \times \opn{SL}(2, \ZZ)/ \pm (I, I) \hookrightarrow \Gamma$
  such that
  $\Delta_{\iota}(g,h) \vert_{2U}:=\Delta(g,h)$ and
  $\Delta_{\iota}(g,h) \vert_B := I$
  for all $g,h \in \opn{SL}(2, \ZZ)$.
  The extension of $\Delta(g,h)$ to $\Delta_{\iota}$ is well defined as $\widetilde{\opn{SO}}^+(2U) = \opn{SO}^+(2U)$. 
\end{defn}
\begin{lemma}\label{DeltaLemma}
  Suppose $[w] \in \dc_L$ is fixed by $g \in \Gamma$.
  If $T_{[w]} \dc_L / \la g \ra$ is a non-canonical singularity then $g = \Delta_{\iota}(g_6, I)^{\pm 1}$ or $\Delta_{\iota}(I, g_6)^{\pm 1}$ for a primitive embedding $\iota: 2U \hookrightarrow L$, where
  \begin{equation*}
    g_6=
    \begin{pmatrix}
      0 & -1 \\
      1 & 1
    \end{pmatrix} \in \opn{SL}(2, \ZZ).
  \end{equation*}
\end{lemma}
\begin{proof}
  If $T_{[w]} \dc_L / \la g \ra$ is a non-canonical singularity then, by using Lemma \ref{NCOM4} and eliminating cases using Proposition \ref{modgp}, $\chi_g=\phi_1^2 \phi_6^2$.
  By Lemma \ref{noncanonS} and \ref{UA2_3tors}, the perp-invariant lattice of $g$ is given by $2U$.
  Therefore, by Lemma \ref{2Utorsion}, $g^2 = \Delta_{\iota}(g_3, I)^{\pm 1}$ or $\Delta_{\iota}(I, g_3)^{\pm 1}$ for a primitive embedding $\iota: 2U \hookrightarrow L$ (where primitivity follows by definition of $S$).
  Hence, by Definition/Lemma \ref{SL2emb},
  $g = \Delta_{\iota}(g_6, I)^{\pm 1}$ or $\Delta_{\iota}(I, g_6)^{\pm 1}$. 
\end{proof}
\subsection{Non-canonical singularities in the boundary of $\overline{\fc}(\Gamma)$}
We now consider the non-canonical singularities of $\overline{\fc}(\Gamma)$ above a boundary curve $F$.
\begin{lemma}\label{bcsingclass}
  Suppose $P \in \overline{\fc}(\Gamma)$ lies above a boundary curve $F$.
  Let $G \subset G(F)$ be the isotropy subgroup of $P$ and let $T_P$ be the tangent space of $(\dc_L/U(F)_{\ZZ})_{\{\sigma_i \}}$ at $P$.
  If $g \in G$ is such that $T_P  / \la g \ra$ is a non-canonical singularity, then $g$ acts as $\frac{1}{6}(0,1,1,2)^{\pm 1}$ or $\frac{1}{6}(1,1,1,2)^{\pm 1}$ and is represented by
  \begin{equation}\label{gNFdef}
    g =
    \begin{pmatrix}
      U & V & W \\
      0 & I & Y \\
      0 & 0 & Z
    \end{pmatrix}
    \in N(F)_{\ZZ}, 
  \end{equation}
  where $U$ and $Z$ are of order 6.    
\end{lemma}
\begin{proof}
  By the modified Reid-Tai criterion, the isotropy subgroup $G$ contains an element $g$ such that $\Sigma'(g) <1$.
  If $g$ is represented by 
  \[
  g  =
  \begin{pmatrix}
    U & V & W \\
    0 & X & Y \\
    0 & 0 & Z
  \end{pmatrix} \in N(F)
  \]
  on the basis \eqref{Qdef} then, in the notation of \eqref{gaction} (following \cite[(8.2), p.292]{Kondo}), $g$ acts on $T_P$ by
  \begin{equation}\label{boundaryaction}
  \begin{pmatrix}
    \opn{exp}_e(t) & 0 & 0 \\
    * & \xi^{-1} X & 0 \\
    * & * & \xi^{-2} 
  \end{pmatrix},
  \end{equation}
  where $\xi = (c \tau + d)$ is a root of unity \cite[Proposition 2.28]{GHSK3}. 
As one can verify by direct calculation, if $g \in G(F)$ is of finite order then $g^m=I$ for $m>0$ if and only if $U^m=X^m=Z^m=I$.
  Therefore, by \eqref{boundaryaction}, $\opn{exp}_e(t)$ is a $12$-th root of unity.
By using this bound to perform an exhaustive computer search, we find that if $\Sigma'(g)<1$ then $g^{\pm 1}$ acts as one of
  $\frac{1}{3}(0,0,1,1)$,
  $\frac{1}{6}(0,0,2,3)$,
  $\frac{1}{6}(0,1,1,2)$,
  $\frac{1}{6}(0,2,2,3)$ or
  $\frac{1}{6}(1,1,1,2)$.
  The case $\frac{1}{6}(0,0,2,3)$ is a false positive as the corresponding group is generated by quasi-reflections and the quotient is therefore smooth \cite{Chevalley}.
  For each of the other cases, the characteristic polynomial $\chi_g$ is given by
  $\phi_3 \phi_r^2$,
  $\phi_6 \phi_r^2$,
  $\phi_2^2 \phi_r^2$ or
  $\phi_1^2 \phi_r^2$,
  where $r = 3$ or $6$.
  By further eliminating cases using Proposition \ref{modgp}, $g^{\pm 1}$ must act as either 
  $\frac{1}{6}(0,1,1,2)$
  or
  $\frac{1}{6}(1,1,1,2)$
  where, in both cases, $\xi=e^{5 \pi i/3}$ and $X=I$. 
  Then, by \eqref{nf}, $U \sim Z$ and so both must be of order 6.
\end{proof}
Our later calculations are simplified if we work with a basis for $L \otimes \QQ$ adapted to $g \in G(F)$.
\begin{lemma}\label{bcsplitlem}
  Assuming the conditions of Lemma \ref{bcsingclass}, let $g \in N(F)_{\ZZ}$ be such that $T_P / \la g \ra$  is a non-canonical singularity. 
  Then, without loss of generality, the basis \eqref{Qdef} can be chosen so that 
  \[
  g =
  \begin{pmatrix}
    U & 0 & W' \\
    0 & I & 0 \\
    0 & 0 & Z
  \end{pmatrix}
  \]
  with $U$ and $Z$ as in \eqref{gNFdef}.
\end{lemma}
\begin{proof}
  We begin by proving some facts about $U$ and $V$ using the basis \eqref{Qdef}, on which $g$ is given by \eqref{gNFdef}.
  As $g$ is of finite order in $G(F)_{\ZZ}$ then, from the description of $U(F)_{\ZZ}$ in \eqref{uf}, the submatrix
  \[
  \begin{pmatrix}
    U & V \\
    0 & I
  \end{pmatrix}
  \]
  is of finite order.
By \eqref{basechangeN} and the definition of $N(F)$ in \eqref{nf}, $V \in M_2(\ZZ)$ 
and as $U$ is of order 6 then
  \[
  U \sim
  \begin{pmatrix}
    0 & -1 \\
    1 & 1
  \end{pmatrix}^{\pm 1},
  \]
  (following from the classification of torsion in $\opn{SL}(2, \ZZ)$ e.g. \cite{Diamond, Serre}) implying $I - U \in \opn{SL}(2, \ZZ)$.

  We now take the $\ZZ$-basis \eqref{Q'def} for $L$, on which $g$ is given by
  \[
    g =
    \begin{pmatrix}
      U & V & W' \\
      0 & I & Y' \\
      0 & 0 & Z'
    \end{pmatrix},
  \]
  with $U$ and $V$ as before (following from \eqref{basechangeN}).
  By applying the change of basis  
  \[
    N_1:=
    \begin{pmatrix}
      I & (I-U)^{-1}V & 0 \\
      0 & I & 0 \\
      0 & 0 & I
    \end{pmatrix}, 
  \]
  we obtain a $\ZZ$-basis $\{f_i \}$ for $L$ such that
  \begin{align*}
    Q:= ((f_i,f_j)) =
    \begin{pmatrix}
      0 & 0 & A \\
      0 & B & C' \\
      A & {}^{\tau}C' & D
    \end{pmatrix} 
    &&
    \text{ and }  
    &&
    g =
    \begin{pmatrix}
      U & 0 & W' \\
      0 & I & Y' \\
      0 & 0 & Z'
    \end{pmatrix}.
    \end{align*}
    As ${}^{\tau}g Q g = Q$ then $BY' = C'(I - Z')$ and as  $Z' \in \opn{SL}(2, \ZZ)$ is of order 6 then $I - Z' \in \opn{SL}(2, \ZZ)$.
  Therefore, as $\opn{det} B \neq 0$ then $B^{-1} C' = Y'(I-Z')^{-1} \in M_2(\ZZ)$ and we can apply the change of basis
  \begin{equation*}
    N_2 : =
    \begin{pmatrix}
      I & 0 & 0 \\
      0 & I & -B^{-1} C' \\
      0 & 0 & I
    \end{pmatrix}
  \end{equation*}
  to obtain a $\ZZ$-basis $\{f_i'\}$ for $L$ such that
  \begin{align}\label{bcsplitbasis}
    ((f_i',f_j')) =
    \begin{pmatrix}
      0 & 0 & A \\
      0 & B & 0 \\
      A & 0 & D'
    \end{pmatrix}
    &&
    \text{ and }
    && g=
    \begin{pmatrix}
      U & 0 & W' \\
      0 & I & Y'' \\
      0 & 0 & Z'
    \end{pmatrix}.
  \end{align}
  By \eqref{Qdef}, $B$ is non-degenerate and as $g$ fixes $\la f_3', f_4' \ra$ then $g$ acts on
  $\la f_3', f_4' \ra^{\perp} = \la f_1',f_2',f_5',f_6' \ra$,
  implying $Y''=0$. 
  The result follows.
\end{proof}
 \section{The Branch Divisor of $\fc(\Gamma)$}\label{BranchSec}
As explained in \S\ref{toroidalsec}, the branch divisor of
$\dc_L \rightarrow \fc(\Gamma)$ is precisely the set of rational quadratic divisors
\[
\{ \dc_L(z) \mid \pm \sigma_z \in \Gamma \}.
\]
In this section, we classify vectors $z \in L$ defining $\pm \sigma_z \in \Gamma$ and the associated lattices $z^{\perp} \subset L$.
We will use these results in \S\ref{HMSec} to calculate Hirzebruch-Mumford volumes, which we use to bound the obstruction space $\opn{RefObs}_{(4-a)k}(\Gamma)$ in \S\ref{ObsSec}.
We will not necessarily assume that $n=2$ and we will adopt the convention that $\opn{gcd}(x, 0)=1$ for $x \neq 0$. 
\begin{lemma}\label{conglemma}
  Let $r, s, k, l \in \NN$ where $k$ and $l$ are taken modulo $2r$ and modulo $2s$, respectively. 
If $u=m$ or $2m$ where $m := \opn{lcm}(2r/(2r,k),2s/(2s,l))$,  then the pair $(k,l)$ satisfies conditions \eqref{cong1} - \eqref{cong3}, given by
  \begin{align}
    ur & \mid mk \label{cong1} \\
    2rsu & \mid m^2kl \label{cong2} \\
    2s & \mid m^2 l^2 u^{-1} s^{-1} - 2, \label{cong3} 
  \end{align}
  if and only if one of the following conditions is satisfied.
  \begin{enumerate}
  \item  $k =0$ and either
    \begin{enumerate}
    \item $u=m$ and $l^2 \equiv 1 \bmod{ s}$ if $l$ is odd;
    \item $u=m$ and $l^2 \equiv 2 \bmod{ 2s}$ if $l$ is even;
\item $u=2m$ and $l=2 l_1$ satisfies $2l_1^2 \equiv 2 \bmod{ 2s}$.
\end{enumerate}
  \item $k = r$ and either
    \begin{enumerate}
    \item $u=m=2s$ and $2l^2 \equiv 2 \bmod{ 2s}$ where $m_l$ is even and $l$ is odd; 
    \item $u=m$ and $l^2 \equiv 1 \bmod{ s}$ if $m_l$ is odd and $l$ is even.
\end{enumerate}
  \end{enumerate}
  Additionally, in the above cases,
  \begin{enumerate}
  \item if $k = 0$ and 
    \begin{enumerate}
    \item  $u=m$ then $m=2s$ when $l$ is odd;
    \item  $u=m$ then $m=s$ when $l$ is even;
    \item  $u=2m$ then $m=s$.
    \end{enumerate}
  \item If $k = r$ and
    \begin{enumerate}
    \item  $u=m$ and $l$ odd then $m=2s$;
    \item  $u=m$ and $l$ even then $m=2s$.
    \end{enumerate}
   \end{enumerate}
\end{lemma}
\begin{proof}
  Let 
  $m_k := 2r \opn{gcd}(2r, k)^{-1}$ and
  $m_l := 2s \opn{gcd}(2s,l)^{-1}$.
  
  \noindent$\textbf{u=m.}$ 
  If $u=m$, then \eqref{cong1} - \eqref{cong3} is equivalent to 
  \begin{align}
    r & \mid k \label{cong4} \\
    2rs & \mid m k l \label{cong5} \\
    2s & \mid m l^2 s^{-1} - 2, \label{cong6}
  \end{align}
  and, by \eqref{cong4}, $k=0$ or $r$.
  \begin{enumerate}
  \item If $k = 0$, then \eqref{cong5} is automatically satisfied.
    As $m_k=1$ then $m=\Lcm(m_k, m_l)=m_l$ and $m l = m_l l =  2s l \Gcd(2s, l)^{-1}$.
As $m=m_l$ then \eqref{cong6} is equivalent to $s^{-1} m l^2 \equiv 2l^2 \Gcd(2s, l)^{-1} \equiv 2 \bmod{ 2s}$ and so $l(l \Gcd(2s, l)^{-1}) \equiv 1 \bmod{ s}$.
    Therefore, $\Gcd(s, l) = 1$ and \eqref{cong6} is equivalent to $l^2 \equiv 1 \bmod{ s}$ if $l$ is odd and to $l^2 \equiv 2 \bmod{ 2s}$ if $l$ is even.
  \item If $k= r$ then \eqref{cong5} is equivalent to $2s \mid ml$.
    If $m_l$ is odd then $m=\Lcm(2, m_l)=2 m_l$ and \eqref{cong5} is equivalent to $s \mid m_l l$. 
    If $m_l$ is even then $m=m_l$ and \eqref{cong5} is equivalent to $2s \mid m_l l$. 
    As $m_l l = \Lcm(l, 2s)$ then \eqref{cong5} is satisfied in both cases.
    If $m_l$ is odd then \eqref{cong6} is equivalent to $s^{-1}ml^2 = 2m_l l^2 s^{-1} = 4l^2 \Gcd(2s, l)^{-1} \equiv 2 \bmod{ 2s}$.
    Therefore, $2l^2 \Gcd(2s, l)^{-1} \equiv 1 \bmod{ s}$ and so $\Gcd(s, l)=1$.
    If $m_l$ is even then \eqref{cong6} is equivalent to $m_l l^2 s^{-1} = 2 l^2 \Gcd(2s, l)^{-1} \equiv 2 \bmod{ 2s}$.
    Therefore, $l^2 \Gcd(2s, l)^{-1} \equiv 1 \bmod{ s}$ and $\Gcd(s, l)=1$.
    If $m_l$ is odd then, as $l$ cannot also be odd, \eqref{cong6} is equivalent to $l^2 \equiv 1 \bmod{ s}$. 
    When $m_l$ is even, \eqref{cong6} is given by $l^2 \equiv 1 \bmod { s}$ if $l$ is odd.
    The case of  even $l$ cannot occur: 
    if it did, then $s$ must be even and $l^2 \equiv 2 \bmod{ 2s}$, implying the contradiction $0 \equiv 2 \bmod{ 4}$.
  \end{enumerate}
  In both cases, the statement about $m$ is immediate. \\   
  \noindent$\textbf{u=2m.}$
  If $u=2m$ then \eqref{cong1} - \eqref{cong3} is equivalent to
  \begin{align}
    2r  & \mid k \label{cong7} \\
    4rs & \mid m k l \label{cong8} \\
    2s  & \mid (2s)^{-1}ml^2 - 2. \label{cong9}
  \end{align}
  By \eqref{cong7}, $k \equiv 0 \bmod{ 2r}$ and therefore $m= m_l$. 
  Condition \eqref{cong8} is automatically satisfied.
  Condition \eqref{cong9} is equivalent to $(2s)^{-1}ml^2 = l^2 \Gcd(2s, l)^{-1} \equiv 2 \bmod{ 2s}$.
  When $l=2 l_1$ then $\Gcd(2s, l)=2$ and \eqref{cong9} is equivalent to $2l_1^2 \equiv 2 \bmod{ 2s}$.
  When $l$ is odd \eqref{cong9} admits no solution.
  The statement about $m$ is immediate.   
 \end{proof}
\begin{lemma}\label{qrinvs}
  For primitive $z \in L$, let $e:=\opn{div}(z)$ and define $x$ and $y$ by $z^* = xv^* + yw^* + L$ where $x$ is taken modulo $2r$ and $y$ is taken modulo $2s$.
  \begin{enumerate}
  \item  If $\sigma_z \in \opn{O}^+(L)$ then $\sigma_z \in \Gamma$ if and only if $e,x,y,z^2$ satisfy the conditions of Lemma \ref{conglemma} with $r=d$, $s=n+1$, $u=z^2$, $m=e$ and  $z^2=-e$ or $-2e$.
  \item If $-\sigma_z \in \opn{O}^+(L)$ then $-\sigma_z \in \Gamma$ if and only if $e,x,y,z^2$ satisfy the conditions of Lemma \ref{conglemma} with $r=n+1$, $s=d$, $u=z^2$, $m=e$ and  $z^2 = -e$ or $-2e$.
  \end{enumerate}
\end{lemma}
\begin{proof}
By \eqref{refdef}, $\pm \sigma_z \in \opn{O}^+(L)$ if and only if $z^2<0$ and $z^2 \mid 2e$.
  If $z = e(xv^* + yw^* + l)$ for $l \in L$ then 
\begin{align*}
    \sigma_z(v^*)  & = v^* - \frac{2(v^*, exv^* + eyw^* + el)}{z^2}z  \\
    \sigma_z(w^*)  & = w^* - \frac{2(w^*, exv^* + eyw^* + el)}{z^2}z. 
  \end{align*}
  As $z^2 \mid 2e$ then $z^2 \mid 2e(v^*, l)$, implying 
\begin{align}
    \sigma_z(v^*) & = v^* + (exd^{-1}z^{-2} - 2e(v^*, l)z^{-2})z + L \label{szv1} \\ 
    & = v^* + exd^{-1}z^{-2}(exv^* + eyw^* + el) + L \label{szv2} \\
    & = (1+ e^2x^2d^{-1}z^{-2})v^* + e^2xyd^{-1}z^{-2} w^* + L. \label{szv3} 
  \end{align}
  Equation \eqref{szv2} follows from \eqref{szv1} by noting that $z^2 \mid 2e(v^*, l)$.
  Equation \eqref{szv3} follows from \eqref{szv2} as $e^2x d^{-1}z^{-2}= (ex 2^{-1}d^{-1})(2ez^{-2})$, $2d \mid ex$ and $z^2 \mid 2e$.
  Similarly, $\sigma_z(w^*)$ is given by
\begin{align}
\sigma_z(w^*) & = w^* + (ey(n+1)^{-1}z^{-2} - 2e(w^*, l)z^{-2})z + L \label{szw1}\\
              & = w^* + (ey(n+1)^{-1}z^{-2})(exv^* + eyw^* + el) + L \label{szw2}\\
              & = e^2xy(n+1)^{-1}z^{-2}v^* + (1 + e^2y^2(n+1)^{-1}z^{-2})w^* + L, \label{szw3}
\end{align}
with \eqref{szw2} following from \eqref{szw1} as $z^2 \mid 2e(w^*, l)$ and \eqref{szw3} following from \eqref{szw2} as $(n+1)z^2 \mid e^2y$.

By Proposition \ref{modgp}, $\pm \sigma_z \in \Gamma$ if and only if $\pm \sigma_z(v^*) = v^* + L$ and $\pm \sigma_z(w^*) = -w^* + L$.
Therefore, $\sigma_z \in \Gamma$ if and only if
\begin{align}\label{sigmacong}
&
\begin{cases}
z^2 d \mid e x \\
2d(n+1)z^2 \mid e^2 x y \\
2(n+1) \mid 2 + e^2 y^2 z^{-2}(n+1)^{-1} 
\end{cases}
\end{align}
(following from \eqref{szv2}, \eqref{szw3} and \eqref{szw3}, respectively).
Similarly, $-\sigma_z \in \Gamma$ if and only if
\begin{align}\label{-sigmacong}
&
\begin{cases}
z^2 (n+1) \mid e y \\
2(n+1) d z^2 \mid e^2 x y\\
2d \mid 2 + e^2 x^2 d^{-1} z^{-2}.
\end{cases}
\end{align}
Condition \eqref{sigmacong} is precisely \eqref{cong1} - \eqref{cong3} with $r=d$, $s=n+1$, $u=-z^2$, $m=e$, $x=k$ and $y=l$ and condition \eqref{-sigmacong} is precisely \eqref{cong1} - \eqref{cong3} with $r=n+1$, $s=d$, $u=-z^2$, $m=e$, $x=l$ and $y=k$.
The result follows.
\end{proof}
\begin{proposition}\label{KgenusProp}
  Let $L=2U \op \la -2r \ra \op \la -2s \ra$ where both $r, s > 1$ and let $v$ and $w$ generate the $\la -2r \ra$ and $\la -2s \ra$ factors of $L$, respectively,
  Suppose $z \in L$ is primitive with divisor $m:=\opn{div}(z)$ and length $u:=-z^2>0$ and assume $\pm \sigma_z \in \Gamma$.
  Let  $z^* := z/m   = k v^*+ l w^* + L \in D(L)$ where $k$ and $l$ are taken modulo $2r$ and $2s$, respectively;
  let $K^{u, m}_{r,s}(k,l) := z^{\perp} \subset L$;
  and let $\opn{gen}(K^{u,m}_{r,s}(k,l)):=(2,3; G, -\delta)$ where $G := D(K^{u,m}_{r,s}(k,l))$ and $\delta$ is a finite quadratic form on $G$.
  Then,
  \begin{enumerate}
  \item if $u=m=2s$, $k=0$ and $l$ is odd then $G \cong C_{2r}$ and 
    \begin{center}
      \begin{tabular}{c c c}
        $\delta(a) = \left ( \frac{a^2}{2r} \right ) \bmod{ 2 \ZZ}$
        & for &
        $(a) \in G$; 
      \end{tabular}
    \end{center}
  \item if $u=2m=2s$, $k=0$ and $l$ is even then $G \cong C_2 \op C_2 \op C_{2r}$ and $\delta$ is given by
    \begin{center}
      \begin{tabular}{ c c c}
        $\delta(a,b,c) =
        \frac{(1-l_1^2)a^2}{2s}
        +\frac{(1-l_1^2)ab}{2s}
        +\frac{sb^2}{2}
        + \frac{c^2}{2r}
        \bmod{ 2\ZZ}$
        & for &
        $(a,b,c) \in G$,
      \end{tabular}
    \end{center}
    where $2l_1:=l$;
  \item if $u=m=2s$, $k=r$, $l$ odd and $l^2  \equiv rs \bmod{ 2s}$ then $G \cong C_{2r}$ and 
    \begin{center}
      \begin{tabular}{c c c}
        $\delta(a) = \frac{(1 - sr)a^2}{2r} \bmod{ 2\ZZ}$
        & for &
        $ (a) \in G$;
      \end{tabular}
    \end{center}
  \item if $u=m=2s$, $k=r$, $l$ even and $r$ odd then $G \cong C_2 \op C_r$ and 
    \begin{center}
      \begin{tabular}{c c c}
        $\delta(a,b) =
        \frac{ (-l^2 + 1)a^2}{2s}
        +
        \frac{ l(-l^2 + 1)ab}{2s}
        +
        \frac{(-sr + 1)b^2}{2r}
        \bmod{ 2\ZZ}$
        & for &
        $(a,b) \in G$.
      \end{tabular}
    \end{center}
  \end{enumerate}
  Furthermore, no other cases can occur.
\end{proposition}
\begin{proof}
  We calculate $q_{z^{\perp}}$, following the approach of \cite[Proposition 1.15.1]{Nikulin}.
  As explained in \S4-5 of \cite{Nikulin}, if $M \subset P$ is a primitive embedding of lattices and $N:=M^{\perp} \subset P$ then the series of overlattices
  \begin{equation*}
  M \op N \subset P \subset P^{\vee} \subset M^{\vee} \op N^{\vee} 
  \end{equation*}
  defines a totally isotropic subgroup
  \begin{equation}\label{overiso}
    H_P:=\frac{P}{M \op N} \subset D(M) \op D(N)
  \end{equation}
  and natural projections $p_M:H_P \rightarrow D(M)$ and $p_N:H_P \rightarrow D(N)$.
  The overlattice $P$ is uniquely determined by the subgroup $H_P$ and the embeddings $M \subset P$ and $N \subset P$ are primitive if and only if $P_M$ and $P_N$ are injective \cite[p.111]{Nikulin}.

  Let $L \subset W$ be a primitive embedding where $W$ is unimodular (suitable $W$ always exists by Corollary 1.12.3 of \cite{Nikulin}) and define $K:=\la z \ra^{\perp} \subset L$ and $T := L^{\perp} \subset W$.
  By \eqref{overiso}, if $w_L \in L^{\vee}$ represents an element of $p_L(H_W)$ then there exists $w \in W$ such that $w = w_L + w_T$ and $w_T \in T^{\vee}$ is unique modulo $T$.
  This correspondence defines an isomorphism
  $\gamma_{TL}:q_L \rightarrow -q_T$
  defined on representatives in $L^{\vee}$ and $T^{\vee}$ by
  $\gamma_{TL}:w_L \mapsto w_T$ \cite[\S5]{Nikulin}.
  (In the terminology of \cite{SPLAG}, the map $\gamma_{TL}$ defines \emph{`glue vectors'}.)
  
  By the above discussion, and by considering the overlattice
  \[
  K \op \la z \ra \op T \subset W,
  \]
  if $V := K^{\perp} \subset W$ then $V$ is an overlattice of $\la z \ra \op T$ and $\gamma_{VK}$ defines an isomorphism $q_K \cong -q_V$, which we use to calculate $q_K$.
As in \eqref{overiso}, $V$ is defined by the totally isotropic subgroup
  \[
  H_V \subset  D(\la z \ra) \op D(T)
  \]
  and we let $P_z$ and $P_T$ denote the associated projections.
    As $T$ is primitive in $W$ then $T$ is primitive in $V$;
    as $\la z \ra$ is primitive in $L$ and $L$ is primitive in $W$ then $\la z \ra$ is primitive in $W$ and so $\la z \ra$ is primitive in $V$,
    implying $P_z$ and $P_T$ are injective.
Let $\alpha$ denote $z/u \in D(\la z \ra)$ and let $z^*=z/m=(k,l) \in D(L) \cong C_{2r} \op C_{2s}$.
    The group $P_z(H_V)$ is cyclic and we take a generator $\xi \alpha$ where $\xi \in \ZZ$.
    By injectivity of $P_T$, $\xi \alpha + v_T = v \in V$ where $v_T \in T^{\vee}$ is unique modulo $T$.
    As $q_L \cong -q_T$ and $\xi \alpha \in L^{\vee}$ (noting that $(l, \xi \alpha + v_T) = (l, \xi \alpha) \in \ZZ$ for all $l \in L^{\vee}$), then
    \begin{equation}\label{xialpha}
      \xi \alpha = \xi m u^{-1} z^* \equiv \xi m u^{-1} (k,l) \in D(L).
    \end{equation}
    As $p_z(H_v)$ is cyclic and $p_z(H_V) \cong H_V$ then
    $H_V = \la (\xi \alpha ,\xi m u^{-1} (k,l) ) \ra \subset D(\la z \ra) \op D(T)$.
  By Proposition 1.4.1 and Corollary 1.6.2 of \cite{Nikulin},
  \begin{equation*}
    -q_K = q_V = (q_{\la z \ra} \op q_T \vert H_V^{\perp})/H_V,
  \end{equation*}
  which can be calculated once a suitable $\xi$ is determined: we claim one can take $\xi=u/m$.
  
  We assume $\xi \neq 0$; if $z^* \equiv 0 \bmod{ L }$, this is immediate from \eqref{xialpha};
  if $z^* \not \equiv 0 \bmod{ L }$ and $\xi = 0$ then $H_V = 0$, implying $\la z \ra \op T$ has no overlattice in $K^{\perp}$ (i.e. $D(\la z \ra) \op D(T)$ has no non-trivial isotropic subgroup by \cite[\S4]{Nikulin}), which is a contradiction as $z^* \not \equiv 0 \bmod{L}$.
  Each $\xi$ defines an element $\xi u^{-1}z \in L^{\vee}$ (i.e. $\xi^{-1}u \vert m$).
  Conversely, for all $0 \neq \xi \in \NN$ such that $\xi'u^{-1} z \in L^{\vee}$ (by definition of $\gamma_{TL}$) there exists $v_T \in T^{\vee}$ such that $v = \xi' \alpha + v_T \in W$ and $v \in K^{\perp}$.
  If $\xi' \in \NN$ satisfies $\xi' \vert \xi$,  $\xi' \neq \xi$ and $\xi'^{-1}u \vert m$ then $\xi'$ defines an overlattice $V \subset V' \subset K^{\perp}$ distinct from $V$, implying $V \neq K^{\perp}$.
  Therefore, we can take $\xi = u/m$.
  
  For the rest of the proof, we identify $D(\la z \ra)$ and $D(T)$ with $\ZZ/u \ZZ$ and $\ZZ/(2r) \ZZ \op \ZZ / (2s) \ZZ$,  respectively; we use $x = (x_1, x_2, x_3)$ to denote an element of $D(\la z \ra) \op D(T)$; and we define the map $\gamma:H_V \rightarrow D(T)$ by $\xi \alpha \mapsto \xi m u^{-1} (k,l)$ where $H_V$ is identified with $\la \xi \alpha \ra \subset D(\la z \ra)$ for suitable $\xi \in \ZZ$.
We now consider cases for $u$, $m$, $k$, $l$ using Lemma \ref{qrinvs}.\\

\noindent $\textbf{u=m=2s, k=0, l \text{odd}.}$ 
The discriminant form on $D(\la z \ra) \op D(T)$ is given by
\begin{equation}\label{u=m=2s,k=0,form}
  \frac{-a^2}{2s} +  \frac{b^2}{2r} + \frac{c^2}{2s} \bmod{ 2\ZZ}
\end{equation}
for $(a,b,c) \in D(\la z \ra) \op D(T)$.
We take $\xi=1$ and so $H_V$
is generated by $y_3:= (1,0,l)$.
An element $x \in D(\la z \ra) \op D(T)$ belongs to $H_V^{\perp} \subset D(\la z \ra) \op D(T)$ if and only if
\begin{equation*}
  \frac{-x_1}{2s} + \frac{l x_3}{2s} \equiv 0 \bmod{ \ZZ};
\end{equation*}
or, equivalently, if
\begin{equation}\label{u=m=2s,k=0,orth}
  x_1 \equiv  l x_3 \bmod{ 2s}.
\end{equation}
The elements $y_1 = (l,0,1)$, $y_2 = (0,1,0)$ both satisfy \eqref{u=m=2s,k=0,orth} and so $H_V^{\perp} = \la y_1, y_2 \ra$.
To determine relations, if $a y_1 + b y_2 = c y_3$ for $a,b,c \in \NN$ then $b \equiv 0 \bmod{2r}$.
By Lemma \ref{conglemma}, $l^2 \equiv 1 \bmod{ s}$, implying  $2y_1 = 2ly_3$ and so $y_1 \bmod{H_V}$ is of order 1 or 2.
We have $y_1 \in H_V$ if and only if $l^2 \equiv 1 \bmod{2s}$, implying 
  \[
  H_V^{\perp}/H_V \cong
  \begin{cases}
    C_2 \op C_{2r} & \text{if $l^2 \equiv 1 + s \bmod{2s}$} \\
    C_{2r} & \text{if $l^2 \equiv 1 \bmod{2s}$}.
  \end{cases}
  \]
  The case $l^2 \equiv 1 + s \bmod{2s}$ cannot occur as, by considering $K^{u,m}_{r,s}(k,l) \otimes \ZZ_2$ and applying Theorem 1.9.1 and  Corollary 1.9.3 of \cite{Nikulin}, we obtain the contradiction that $K^{u,m}_{r,s}(k,l)$ is of even rank. 
  Therefore, $H_V^{\perp}/H_V \cong C_{2r}$ and  
  \[
  q_V(a)=\frac{a^2}{2r}  \bmod{ 2\ZZ}
  \]
  for $a \in C_{2r} \cong \ZZ/(2r)\ZZ$.

  \noindent $\textbf{u=m=s, k=0, l \text{even}.}$ 
  The discriminant form of $D(\la z \ra) \op D(T)$ is given by
\[
  \frac{-a^2}{s} +  \frac{b^2}{2r} + \frac{c^2}{2s}  \bmod{ 2\ZZ}
  \]
for $(a,b,c) \in D(\la z \ra) \op D(T)$.
  We take $\xi = 1$ and $y_3:=(1,0,l)$ as a generator of $H_V$.
  An element $x \in D(\la z \ra) \op D(T)$ belongs to $H_V^{\perp}$ if and only if
\[
  \frac{-x_1}{s} + \frac{l x_3}{2s} \equiv 0 \bmod{ \ZZ};
  \]
or equivalently, as $l=2l_1$, if
  \begin{equation*}
    x_1 \equiv l_1 x_3 \bmod{ s}.
  \end{equation*}
  Therefore, $H_V^{\perp} = \la y_1, y_2 \ra$ where $y_1:=(l_1, 0, 1)$ and $y_2 = (0, 1, 0)$.
To determine relations, suppose $a y_1 + b y_2 = c y_3$ where $a,b,c \in \NN$,
  $a$ is taken modulo $2s$,
  $b$ is taken modulo $2r$
  and $c$ is taken modulo $s$.
  Then $b = 0$ and $a \equiv cl \bmod{2s}$, implying $a$ is even.
  By Lemma \ref{conglemma}, $l^2 \equiv 2 \bmod{ 2s}$ and if $a=2$ and $c=l$ then $a y_1 = c y_3$.
  Therefore, $y_1$ and $y_2$ modulo $H_V$ are of order $2$ and $2r$, respectively and satisfy no non-trivial relations, implying $H_V^{\perp}/H_V \cong C_2 \op C_{2r}$; however, this case cannot occur for rank reasons, as for $u=m=2s$, $k=0$, $l$ odd.

  \noindent $\textbf{u=2m, m=s, k=0, l \text{even}.}$ 
  The form on $D(\la z \ra) \op D(T)$ is as in \eqref{u=m=2s,k=0,form} and $l=2l_1$.
  We take $\xi=2$ and $y_4 := (2,0,l) \in D(\la z \ra) \op D(T)$ as a generator of $H_V$. 
  An element $x \in D(\la z \ra) \op D(T)$ belongs to $H_V^{\perp}$ if and only if
\[
  \frac{-x_1}{s} + \frac{l_1 x_3}{s} \equiv 0 \bmod{ \ZZ};
  \]
or, equivalently, if
  \begin{equation*}
    x_1 \equiv l_1 x_3 \bmod{ s}.
  \end{equation*}
  As $2l_1^2 \equiv 2 \bmod{ 2s}$ then
  $H_V^{\perp} = \la y_1, y_2, y_3 \ra$ where
  $y_1 = (l_1, 0, 1)$,
  $y_2=(0,1,0)$ and 
  $y_3=(s,0,0)$.
  Suppose $a y_1 + b y_2 + c y_3 = d y_4$ where
  $a$ is taken modulo $2s$,
  $b$ is taken modulo $2r$,
  $c$ is taken modulo $2$ and
  $d$ is taken modulo $s$.
  Then $b=0$ and $(a-dl)(l_1,0,1) = c(s,0,0)$, implying $a \equiv dl \bmod{ 2s}$ and $c \equiv 0 \bmod{ 2}$.
  As $l$ is even then $a=:2a_1$ is even and $a \equiv 2dl_1 \bmod{ 2s}$ is equivalent to $a_1 \equiv d l_1 \bmod{ s}$, which always admits a solution as $2l_1^2 \equiv 2 \bmod{ 2s}$ implies $\opn{gcd}(l_1, s)=1$.
  Therefore, $y_1$, $y_2$ and $y_3$ modulo $H_V$ are of order $2$, $2r$ and $2$, respectively and satisfy no non-trivial relations.
  Hence, $H_V^{\perp}/H_V \cong C_2 \op C_2 \op C_{2r}$ and
  \[
  q_V(a,b,c) \equiv 
  \frac{(1-l_1^2)a^2}{2s}
  -
  \frac{(1-l_1^2)l_1 ab}{2s}
  + \frac{sb^2}{2}
  +
  \frac{c^2}{2r} \bmod{ 2 \ZZ}
  \]
  for $(a,b,c) \in C_2 \op C_2 \op C_{2r}$.
  
  \noindent $\textbf{u=m=2s, k=r, l \text{odd}.}$
  The form on $D(\la z \ra) \op D(T)$ is as in \eqref{u=m=2s,k=0,form}.
  We take $\xi = 1$ and $y_3:=(1,r,l)$ as a generator for $H_V$.
  We have $x \in H_V^{\perp}$ if and only if
  \begin{equation*}
    \frac{-x_1}{2s} + \frac{r x_2}{2r} + \frac{l x_3}{2s} \equiv \bmod{ \ZZ};
  \end{equation*}
  or, equivalently, if 
  \begin{equation}\label{u=m=2s,k=r,orth}
    \begin{cases}
      x_1 \equiv l x_3 \bmod{ 2s} & \text{if $x_2$ even} \\
      x_1 \equiv l x_3 + s x_2 \bmod{ 2s} & \text{if $x_2$ odd}.
    \end{cases}
  \end{equation}
  The elements $y_1:=(l,0,1)$ and $y_2:=(s,1,0)$ both satisfy \eqref{u=m=2s,k=r,orth} and one checks that all $x \in H_V^{\perp}$ satisfy $x = x_3 y_1 + x_2 y_2$, implying $H_V^{\perp} = \la y_1, y_2 \ra$.
By Lemma \ref{conglemma}, $2y_1 = (2l,0,2)=2l(1,r,l) = 2l y_3$ and so $y_1$ modulo $H_V$ is of order $1$ or $2$. 
  If $y_1 = a y_3$ for $a \in \NN$ then $ar \equiv 0 \bmod{ 2r}$ and so $a$ is even; on the other hand, $a \equiv l \bmod{ 2s}$ implies $a$ is odd; therefore, $y_1$ modulo $H_V$ is of order $2$. 
  Now suppose $a y_2 = b y_3$ for $a, b \in \NN$ taken modulo $2r$ and $2s$, respectively. 
  By Lemma \ref{conglemma}, $2l^2 \equiv 2 \bmod{ 2s}$ and as $l$ is odd then $\opn{gcd}(l, 2s)=1$.
  Therefore $b=0$ and $a=0$, implying $y_2$ modulo $H_V$ is of order $2r$.
  
  We show that $y_1 + a y_2 = b y_3$ if and only if $l^2 - 1 \equiv rs \bmod{ 2s}$.
  Suppose that 
  $l-as-b \equiv 0 \bmod{ 2s}$,
  $a-br \equiv 0 \bmod{ 2r}$ and
  $1-bl \equiv 0 \bmod{ 2s}$.
  As $\opn{gcd}(l,2s)=1$ then $b \equiv l^{-1} \bmod{ 2s}$.
  As $l$ is odd then $b$ is odd and so $a \equiv r \bmod{ 2r}$.
  Therefore $l-rs - l^{-1} \equiv 0 \bmod{2s}$ and $l^2 - rs - 1 \equiv 0 \bmod{ 2s}$.
  The converse is immediate and so, 
  \begin{equation*}
    H_V^{\perp}/H_V \cong
    \begin{cases}
      C_{2r} & \text{if $l^2 - 1 \equiv rs \bmod{ 2s}$}\\
      C_2 \op C_{2r} & \text{if $l^2 - 1 \not \equiv rs \bmod{ 2s}$}.
    \end{cases}
  \end{equation*}
  As for the case of $u=m=2s$, $k=0$, $l$ odd, the case $l^2 - 1 \not \equiv rs \bmod{ 2s}$ cannot occur for rank reasons.
  In the remaining case, $q_V$ is given by 
  \[
  q_V(a) = \frac{(1-sr)a^2}{2r} \bmod{ 2 \ZZ}
  \]
  for $a \in C_{2r}$.

  \sloppy
  \noindent  $\textbf{u=m=2s, k=r, l \text{even}.}$ 
  The quadratic form on $D(\la z \ra) \op D(T)$ is as in  \eqref{u=m=2s,k=0,form}.
  We take $\xi=1$ and $y_3 := (1,r,l)$ as a generator of $H_V$.
  As for the case of $u=m=2s$, $k=r$, $l$ odd, $H_V^{\perp} = \la y_1, y_2 \ra$ where $y_1:=(l,0,1)$ and $y_2:=(s,1,0)$.
  By Lemma \ref{conglemma}, $l^2 \equiv 1 \bmod{ s}$ and so $2l^2 \equiv 2 \bmod{ 2s}$.
  Therefore, $2y_1 = 2l y_3$ and so $y_1$ modulo $H_V$ is of order 1 or 2. 
  If $y_1 = b y_3$ for $b$ taken modulo $2s$ then $bl \equiv 1 \bmod{ 2s}$, implying the contradiction $\opn{gcd}(l, 2s)=1$ and so $y_1$ modulo $H_V$ is of order 2. 
  Suppose $a y_2 = b y_3$ for $a$ and $b$ taken modulo $2r$ and  $2s$, respectively.
  Then $bl \equiv 0 \bmod{ 2s}$ and, as $\opn{gcd}(l, s)=1$, we have $b \equiv 0 \bmod{ s}$.
  If $b$ is even then $a=0$; 
  if $b$ is odd then, as $l^2 \equiv 1 \bmod{ s}$ and $l$ is even, $s$ is odd.
  As $as \equiv b \bmod{ 2s}$ then $a \equiv b \bmod{ 2}$ and so $a$ is odd;
  and as $a \equiv br \bmod{ 2r}$ and $b$ is odd, then $a \equiv r \bmod{ 2r}$, implying $r$ is odd.
  Hence, $y_2$ modulo $H_V$ is of order $2r$ if $r$ is even and order $r$ if $r$ is odd.
  If $y_1 + a y_2 = b y_3$ for $a,b \in \NN$ then $1-bl \equiv 0 \bmod{ 2s}$, which is a contradiction as $l$ is even.
  Therefore, there are no non-trivial relations between $y_1$ and $y_2$ modulo $H_V$, implying 
  \begin{equation*}
    H_V^{\perp}/H_V \cong
    \begin{cases}
      C_2 \op C_{2r} & \text{if $r$ is even} \\
      C_2 \op C_r & \text{if $r$ is odd.}
    \end{cases}
    \end{equation*}
  As for $u=m=2s$, $k=0$, $l$ odd, the case of even $r$ cannot occur for rank reasons.
  Therefore, if $r$ is odd, then  $q_V$ is given by
\[
  q_V(a,b) \equiv
  \frac{(-l^2 + 1)a^2}{2s}
  +
  \frac{l(l^2 - 1)ab}{2s}
  +
  \frac{(1-sr)b^2}{2r} \bmod{ 2 \ZZ}
  \]
  for $(a,b) \in C_2 \op C_r$.
\end{proof}
 \section{Extension Criteria}\label{ExtensionSec}
We now prove the extension criteria used in Theorem \ref{lwcft}, starting with an exercise on toric varieties.
For a finite group $G \subset \opn{GL}(n, \CC)$, we let $X_G$ denote the quotient $\CC^n/G$.
\begin{lemma}\label{toriclemma}
  Take coordinates
  $(y_1, y_2, y_3, y_4)$
  for $\CC^4$ and suppose $g \in \opn{GL}(4, \CC)$ is given by
  $\frac{1}{6}(1,1,0,2)$,
  $\frac{1}{6}(0,1,1,2)$
  or
  $\frac{1}{6}(1,1,1,2)$. 
  Let 
  \[
  \Omega = h(y_1, y_2, y_3, y_4) 
  \frac{(dy_1 \wedge \ldots \wedge dy_4)^{\otimes k}}{(y_1 \ldots y_4)^{\otimes k}}
  \]
  be a $g$-invariant differential form for $\CC^4$.
  If $h$ vanishes to order $\geq 3k$ along $y_2 =0$ and to order $\geq k$ along $y_i$ for all $i \neq 2$, then $\Omega$ extends to a pluricanonical form on a desingularisation of $X_{\la g \ra}$.  
\end{lemma}
\begin{proof}
  \noindent $\bold{\frac{1}{6}(1,1,0,2)}$
  Let $g = \frac{1}{6}(1,1,0,2)$. 
  As in \cite{Tai},  $X_{\la g \ra}$ can be realised as a toric variety $X(\sigma)$ as follows.
  Let $N$ be a lattice with basis
  $p_1=(1,0,0,0)$,
  $p_2=(0,1,0,0)$,
  $p_3=(0,0,1,0)$,
  $p_4=(0,0,0,1)$
  and let $\overline{N}$ be an overlattice of $N$ such that $\overline{N}/N$ is generated by $\frac{1}{6}(1,1,0,2) \in N \otimes \QQ$.
  On the basis $\{ p_i \}$ the lattice $\overline{N}$ is spanned by
  $q_1=\frac{1}{6}(1,1,0,2)$,
  $q_2=(0,1,0,0)$,
  $q_3=(0,0,1,0)$,
  $q_4=(0,0,0,1)$;
  and the cone $\sigma$, given by
  \[
    \sigma =
    \left \{
    \sum_{i=1}^4 \lambda_i p_i \mid \lambda_i \geq 0
    \right \}
    \subset \overline{N} \otimes \RR,
  \]
  is generated by
  $(6,-1,0,-2)$,
  $(0,1,0,0)$,
  $(0,0,1,0)$,
  $(0,0,0,1)$
  on the basis $\{q_i\}$.
  The toric variety $X(\sigma)$ can be resolved by subdividing $\sigma$ to obtain a fan $\Sigma$ whose rays are given by
  $u_1=(6,-1,0,-2)$,
  $u_2=(0,1,0,0)$,
  $u_3=(0,0,1,0)$,
  $u_4=(0,0,0,1)$,
  $u_5=(3,0,0,-1)$,
  $u_6=(1,0,0,0)$
  on the basis $\{q_i\}$;
  and whose maximal cones are given by
  $\{u_1, u_3, u_4, u_6 \}$,
  $\{u_1, u_3, u_5, u_6 \}$,
  $\{u_2, u_3, u_4, u_6 \}$,
  $\{u_2, u_3, u_5, u_6 \}$.
  (We performed the calculation using the package \texttt{NormalToricVarieties} for the computer algebra system  \texttt{Macaulay2}, but the result can be easily checked by hand using standard toric geometry \cite{Fulton}.)
  
  As in \cite{Tai}, if 
  \begin{align*}
    \Omega
    & = h(y_1, \ldots, y_4) \frac{ (dy_1 \wedge \ldots \wedge dy_4)^{\otimes k}}{(y_1 \ldots y_4)^k} \\
    & = h(y_1^*, \ldots, y_4^*) \frac{ (dy_1^* \wedge \ldots \wedge dy_4^*)^{\otimes k}}{(y_1^* \ldots y_4^*)^k}, 
  \end{align*}
  where $y_1^*, \ldots, y_4^*$ are coordinates defined by the rays of a maximal cone of $\Sigma$,
  then $\Omega$ extends to a pluricanonical form on $X(\Sigma)$ if $h$ vanishes to order $\opn{ord}_{y_j^*} h \geq k$ along each of the divisors $y_i^*=0$.
If $y_i^*$ is given by the ray
  $v = \sum \lambda_j p_j$ where $\lambda_j \in \QQ$, $\lambda_i \geq 0$, then
  \[
    \opn{ord}_{y_i^*} h =
    \sum \lambda_j \opn{ord}_{y_j} h. 
  \]
On the basis $\{p_i\}$ we have
  $u_1 = (1,0,0,0)$,
  $u_2 = (0,1,0,0)$,
  $u_3 = (0,0,1,0)$,
  $u_4 = (0,0,0,1)$,
  $u_5=\frac{1}{2}(1,1,0,0)$ and
  $u_6 = \frac{1}{6}(1,1,0,2)$.
  From the statement of the lemma,  $\opn{ord}_{y_2}(h) \geq 3k$ and $\opn{ord}_{y_i}(h) \geq k$ for $i \neq 2$ and we calculate
  $\opn{ord}_{y_1^*}(h) \geq k$, 
  $\opn{ord}_{y_2^*}(h) \geq 3k$, 
  $\opn{ord}_{y_3^*}(h) \geq k$, 
  $\opn{ord}_{y_4^*}(h) \geq k$, 
  $\opn{ord}_{y_5^*}(h) \geq 2k$ 
  and 
  $\opn{ord}_{y_6^*}(h) \geq k$.
  \\
  \\
\noindent $\bold{\frac{1}{6}(0,1,1,2)}$
We proceed as for $g=\frac{1}{6}(0,1,1,2)$. 
For $\{p_i\}$ defined previously, we take generators 
$q_1 = \frac{1}{6}(0,1,1,2)$,
$q_2=(1,0,0,0)$,
$q_3=(0,0,1,0)$ and
$q_4 = (0,0,0,1)$ for $\overline{N} \supset N$; and set $X_{\la g \ra} = X(\sigma)$ where the cone $\sigma$ is generated by the rays
$(0,1,0,0)$,
$(6,0,-1,-2)$,
$(0,0,1,0)$ and
$(0,0,0,1)$ on the basis $\{q_i\}$. 
We resolve $X(\sigma)$ by subdividing $\sigma$ to obtain the fan $\Sigma$ with with rays
$u_1=(0,1,0,0)$,
$u_2=(6,0,-1,-2)$,
$u_3=(0,0,1,0)$,
$u_4=(0,0,0,1)$,
$u_5=(3,0,0,-1)$ and
$u_6=(1,0,0,0)$ on the basis $\{q_i\}$;
and maximal cones
$\{u_1, u_2, u_4, u_6\}$,
$\{u_1, u_2, u_5, u_6 \}$,
$\{u_1, u_3, u_4, u_6 \}$ 
and 
$\{u_1, u_3, u_5, u_6 \}$.
We have
$u_1=(1,0,0,0)$,
$u_2=(0,1,0,0)$,
$u_3=(0,0,1,0)$,
$u_4=(0,0,0,1)$,
$u_5=\frac{1}{2}(0,1,1,0)$ and
$u_6 = \frac{1}{6}(0,1,1,2)$ on the basis $\{p_i\}$;
from which we calculate 
$\opn{ord}_{z_i^*}(h) \geq k$ for all $i$ and
$\opn{ord}_{z_2^*}(h) \geq 3k$.

\noindent  $\bold{\frac{1}{6}(1,1,1,2)}$
Similarly, for $g=\frac{1}{6}(1,1,1,2)$, we realise $X_{\la g \ra}$  as the toric variety $X(\sigma)$ where  $\overline{N}$ is generated by 
$q_1=\frac{1}{6}(1,1,1,2)$,
$q_2=(0,1,0,0)$,
$q_3=(0,0,1,0)$,
$q_4=(0,0,0,1)$ on the basis $\{p_i\}$, with $\{p_i\}$ is as before;
and the cone $\sigma$ is generated by the rays 
$(6,-1,-1,-2)$,
$(0,1,0,0)$,
$(0,0,1,0)$,
$(0,0,0,1)$
on $\{q_i\}$.
We resolve $X(\sigma)$ by subdividing $\sigma$, obtaining a fan $\Sigma$ with rays
$u_1=(6,-1,-1,-2)$,
$u_2=(0,1,0,0)$,
$u_3=(0,0,1,0)$,
$u_4=(0,0,0,1)$,
$u_5=(3,0,0,-1)$,
$u_6=(1,0,0,0)$
on $\{q_i\}$;
and maximal cones
$\{u_1, u_2, u_4, u_6 \}$,
$\{u_1, u_2, u_5, u_6 \}$,
$\{u_1, u_3, u_4, u_6 \}$,
$\{u_1, u_3, u_5, u_6 \}$,
$\{u_2, u_3, u_4, u_6 \}$ 
and 
$\{u_2, u_3, u_5, u_6 \}$.
On the basis $\{p_i\}$ the rays $u_i$ are given by
$u_1 = (1,0,0,0)$,
$u_2 = (0,1,0,0)$,
$u_3 = (0,0,1,0)$,
$u_4 = (0,0,0,1)$,
$u_5 = \frac{1}{2}(1,1,1,0)$ 
and 
$u_6 = \frac{1}{6}(1,1,1,2)$,
from which we conclude
$\opn{ord}_{z_i^*}(h) \geq k$ for all $i$ and
$\opn{ord}_{z_2^*}(h) \geq 3k$.
\end{proof}
\begin{proposition}\label{intsingextprop}
  Suppose $\ec$ is taken so that for each
  primitive sublattice $2U \subset L$ there exists a embedding $2U \op \la v \ra \hookrightarrow L \in \ec$ for $v \in (2U)^{\perp}$.
  Then the differential form $\Omega(f)$ of Theorem \ref{lwcft} extends to a pluricanonical form on $\widetilde{\fc}(\Gamma)$ above the singular locus of $\fc(\Gamma)$.
\end{proposition}
\begin{proof}
  By \cite[Proposition 3.2]{Tai}, a $G$-invariant pluricanonical form on $\CC^n$ extends to a pluricanonical form on a desingularisation of $X_G$ if and only if it extends to a pluricanonical form on a desingularisation of $X_{\la g \ra}$ for each $g \in G$.
  We use this criterion to show that $\Omega(F)$ extends to $\widetilde{\fc}(\Gamma)$ by studying $\Omega(F)$ in  a neighbourhood of a non-canonical singularity. 
  By Lemma \ref{DeltaLemma}, non-canonical singularities of  $\fc(\Gamma)$ correspond to the fixed loci of $\Delta_{\iota}(g_6, I)$ or $\Delta_{\iota}(I, g_6) \in \Gamma$ for a primitive embedding $\iota:2U \hookrightarrow L$.
  Here we consider the case of $g:= \Delta_{\iota}(g_6, I)$, with $\Delta_{\iota}(I, g_6)$ following by an identical argument.
  
  For an arbitrary primitive embedding $2U \subset L$, let $B:=\iota(2U)^{\perp} \subset L$ and, for $v$ as in the statement of the lemma, suppose $u \in B$ is primitive and orthogonal to $v$.
  We begin by defining an isomorphism $\hc \rightarrow \dc_L$, following \cite[p.508]{Handbook}.
  Let $C^+(U \op B)$ be a connected component of the cone
  \[
  C(B \op U) =
  \{
  x \in (B \op U) \otimes \RR \mid (x,x)>0
  \},
  \]
  and let $\hc$ be the tube domain 
  \[
    \hc := (B \op U) \otimes \RR + i C^+(B \op U).
  \]
For $i=1,2$, let $\{e_i, f_i \}$ be standard bases for the two copies of $U$ in $\iota(2U)$ and let $(x_1, x_2, x_3, x_4) \mapsto x_1v + x_2u + x_3 e_1 + x_4 f_1 \in \hc \subset (B \op U) \otimes \CC$ define coordinates for $\hc$.
  We define the isomorphism $\hc \rightarrow \dc_L$  by 
  \[
  \hc \ni \underline{x}:=(x_1, x_2, x_3, x_4) \mapsto [x_1v + x_2 u + x_3 e_1 + x_4 f_1 + e_2 + \frac{1}{2}x_6 f_2 ] \in \dc_L \subset \PP(L \otimes \CC)
  \]
  where $x_6 := -(2x_3 x_4 + x_1^2v^2 + 2x_1x_2(u,v) + x_2^2u^2)$.
  If
\[
  h = \bpm a & b \\ c & d \epm \in \opn{SL}(2, \ZZ),
  \]
  then 
  \begin{equation}\label{GammaAction}
  \Delta_{\iota}(h, I):
  \underline{x} \mapsto
  \left(
  \frac{x_1}{a-bx_4}, \frac{x_2}{a-bx_4}, x_3 - \frac{b(x_1^2v^2 + x_2^2 u^2)}{2(a-bx_4)}, \frac{dx_4 - c}{a-bx_4}
  \right ).
  \end{equation}
  As we fixed a component of the cone $C(U \op B)$, if $\Delta_{\iota}(g_6, I) \underline{x} = \underline{x}$ then $x_4 = e^{\pi i /3}=:\xi$, $x_3$ is free and $x_1=x_2=0$.

  Following the approach of \cite{Cartan} (and the explanation of \cite[p.331]{HuybrechtsK3}), we introduce local coordinates on $\hc$ linearising the action of an order $n$ element $g \in \Gamma$.  
  For a fixed point $\underline{w} \in \hc$ of $g$, let $\underline{x}' := \underline{x} - \underline{w}$ and $(g_1, \ldots, g_4):=g(\underline{x'})$.
  If  
  \[
  \textbf{d}_0(g) :=  \left. \left ( \frac{\partial g_i}{\partial x_j'} \right ) \right \vert_{\underline{x}'=0}
  \]
  then
  \[
  \underline{y}(\underline{x}'):=\frac{1}{n} \sum_{i=1}^n
 \textbf{d}_0(g)^{-i} g^i(\underline{x}') 
 \]
 defines local coordinates $\underline{y}=(y_1, y_2, y_3, y_4)$ around $\underline{w}$ on which $g$ acts by $\textbf{d}_0(g)$.
 Therefore, if $g=\Delta_{\iota}(g_6, I)$ then, for a sixth root of unity $\xi$, 
 \begin{equation}\label{LinearCoords}
 \textbf{d}_0(g) = \opn{diag}(\xi, \xi, 1, \xi^2)
  \end{equation}
  and from \eqref{GammaAction} and \eqref{LinearCoords}, 
 \begin{equation}\label{yQcoords}
 \underline{y} = \left ( x_1' Q_1(x_4'), x_2' Q_2(x_4'),  Q_3(\underline{x'}), Q_4(\underline{x'})\right )
 \end{equation}
 where all $Q_i$ are holomorphic and neither $Q_1$ nor $Q_2$ vanish at zero\footnote{
 To obtain the terms
 \[
 Q_1(x_4')=Q_2(x_4')=\frac{1}{3}\left(1-\frac{1}{\xi^2(x_4' + \xi)} + \frac{1}{\xi^4(x_4'+\xi-1)}\right)
 \]
 in \eqref{yQcoords} with minimal calculation, one can simply determine $\underline{y}$ for $\Delta(g_3, I)$ and note that $\Delta(-I, I)$ acts by $-1$ on $y_1$, $y_2$ but fixes $y_3$, $y_4$, implying $\underline{y}$ are also linear coordinates for $\Delta(g_6, I)$.}.
 As the modular form $f(\underline{x'})$ vanishes to order $2k$ along $x_2'=0$ then $f(\underline{y})$ vanishes to order $2k$ along $y_2=0$.
 By Lemma \ref{toriclemma}, $\Omega(f)$ extends to $\widetilde{\fc}(\Gamma)$ above a fixed point of $\Delta_{\iota}(g_6, I)$, from which the result follows. 
\end{proof}
\begin{proposition}\label{bcextprop}
  Suppose $\ec$ is taken so that for each
  boundary curve $F$ there exists an embedding $u^{\perp} \hookrightarrow L \in \ec$ for a vector $u$ in the lattice $B$ of \eqref{bcsplitbasis}.
Then the differential form $\Omega(f)$ of Theorem \ref{lwcft} extends to a pluricanonical form on $\widetilde{\fc}(\Gamma)$  above all boundary curves.
\end{proposition}
\begin{proof}
  We proceed as in Proposition \ref{intsingextprop}.
  By Lemma \ref{bcsingclass}, it suffices to consider points in the boundary fixed by $g \in G(F)$ acting as $g = \frac{1}{6}(1,1,1,2)$ or $\frac{1}{6}(0,1,1,2)$.
 In either case, we take the basis $\{f_i\}_{i=1}^6$ of $L \otimes \QQ$ defined in Lemma \ref{bcsplitlem}, on which $g$ is represented by 
\[
 \begin{array}{c c c}
      g =
      \bpm
      U & 0 & W' \\
      0 & I & 0 \\
      0 & 0 & Z
      \epm
      \in N(F)_{\ZZ}
      & \text{ for } &
      Z= \bpm
      a & b \\ c & d
      \epm \in \opn{SL}(2, \ZZ).
    \end{array}
  \]
  If $(z, \underline{w}, \tau)$ are the coordinates for $\dc_L(F)$ defined in  \eqref{DLFcoord} and  $r=\opn{exp}_e(z)$ (where $e$ is as in \eqref{Adeltae}) then $(r, \underline{w}, \tau)$  are  coordinates for $(\dc_L(F)/U(F)_{\ZZ})_{\{\sigma_i\}}$.
  By \eqref{gaction}, for either $g$ fixing $P=(0, \underline{w}_0, \tau_0)$, $\tau_0$ is an elliptic point of $\HH^+$,
  $(c \tau_0 + d)^{-1}=\xi$
  and
  $\underline{w}_0 = 0$, 
  where $\xi=e^{\pi i /3}$. 
  As in Proposition \ref{intsingextprop}, we define coordinates $\underline{y}$ on which $g$ acts linearly.
  From \eqref{gaction}, 
  \[
  \textbf{d}_0(g) =
  \bpm
  * & 0 & * \\
  0 & \frac{-c}{c \tau+d} I & 0 \\
  0 & 0 & * 
  \epm,
  \]
  and we obtain coordinates $\underline{y}=(y_i)$  where $y_2 = w_1$ and $y_3 = w_2$.
  In particular (by selecting an appropriate basis of $B$), we can assume that  $y_2=0$ is a local equation for the closure of $\dc_L(u)$ in a neighbourhood of $P$.
  By applying a further transformation of the form
  \[
  \bpm
  * & 0 & 0 \\
  0 & I & 0 \\
  * & 0 & *
  \epm,
  \]
  we obtain coordinates on which $g$ acts as $\opn{diag}(*, \xi, \xi, \xi^2)$.
  As in Proposition \ref{intsingextprop}, the result follows from Lemma \ref{toriclemma}.

\end{proof}
 \section{Low-weight cusp forms}\label{lwcfsec}
We now prove the existence of low-weight cusp forms for $\Gamma$ by studying dimension formulae for vector-valued modular forms, following a suggestion of the anonymous referee.
In this section, we will assume that $L=2U \op \la -2(n+1) \ra \op \la -2d \ra$ where both $n,d>0$.
Our results should be compared with the work of Ma on $\opn{O}(D(L))$-invariant vector-valued modular forms \cite{MaGauss}.
\subsection{Vector-valued modular forms}
The \emph{metaplectic group} $\opn{Mp}(2, \ZZ)$ is a double cover of $\opn{SL}(2, \ZZ)$ whose elements are given by pairs of the form $(M, \phi(\tau))$, where
\[
M = \bpm a & b \\ c & d \epm \in \opn{SL}(2, \ZZ)
\]
and $\phi(\tau)$ is a holomorphic function on the upper-half plane $\HH^+$ satisfying $\phi(\tau)^2 = c \tau + d$ \cite{autograss}.
The metaplectic group is generated by the elements 
\begin{center}
  \begin{tabular}{ccc}
  $T = \left ( \bpm 1 & 1 \\ 0 & 1 \epm, 1 \right )$ & \text{and} &
  $S = \left ( \bpm 0 & -1 \\ 1 & 0 \epm, \sqrt{\tau} \right )$
  \end{tabular}
\end{center}
 and the centre of $\opn{Mp}(2, \ZZ)$ is generated by  
\[
Z:=S^2 = \left ( \bpm -1 & 0 \\ 0 & -1 \epm, \sqrt{-1} \right ).
\]
\begin{defn}{\hspace{1sp}\cite{autograss}}
Suppose $\rho:\opn{Mp}(2, \ZZ) \rightarrow \opn{GL}(n, \CC)$ is a representation of $\opn{Mp}(2, \ZZ)$ and let $l \in \frac{1}{2} \ZZ$.
A function $f: \HH^+ \rightarrow \CC^n$ that is holomorphic on $\HH^+$ and at cusps is said to be a \textbf{vector-valued modular form} of type $\rho$ and weight $l$ if it satisfies 
\[
f(M \tau) = \rho(M, \phi)\phi(\tau)^{2l} f(\tau)
\]
for  all $(M, \phi) \in \opn{Mp}(2, \ZZ)$.
We denote the space of all such forms by $M_l(\rho)$ and the subspace of cusp forms by $S_l(\rho)$.
\end{defn}
The dimension of $M_k(\rho)$ and $S_k(\rho)$ can often be calculated using Theorem \ref{vvdimform}.
\begin{thm}[{\hspace{1sp}\cite{SkoruppaThesis}\cite[p.23]{EholzerSkoruppa}}]\label{vvdimform}
  Let $\rho:\opn{Mp}(2, \ZZ) \rightarrow \opn{GL}(m, \ZZ)$ be a representation with finite image such that $\rho(\xi_4^2 I, \xi_4) = \xi_4^{-2k}$ for all fourth roots of unity $\xi_4$.
  If $k \in \frac{1}{2}\ZZ$, then 
  \begin{align}\label{vvdim}
  \opn{dim} S_{2-k}(\rho) - \opn{dim} M_k(\overline{\rho})  =
  &
  \frac{1-k}{12} m - \frac{1}{4} \opn{Re}(e^{\pi i k/2} \opn{tr} \overline{\rho}(S))  \\
   & - \frac{2}{3 \sqrt{3}} \opn{Re}(e^{\pi i (2k+1)/6} \opn{tr}(\overline{\rho}(ST))) \nonumber \\
      & - \frac{1}{2} \alpha(\overline{\rho}) + \sum_{j=1}^m \BB_1(\lambda_j) \nonumber
\end{align}
  where $\lambda_j \in \CC$ are the eigenvalues of $\overline{\rho}(T)$;
  $\alpha(\overline{\rho})$ is the number of $\lambda_j$ such that $e^{2 \pi i \lambda_j} =1$; and 
\[
\BB_1(x) =
\begin{cases}
  x' - 1/2 & \text{if $x \in x' + \ZZ$ for $0<x'<1$} \\
  0 & \text{if $x \in \ZZ$.}
  \end{cases}
\]
\end{thm}
\begin{defn}
The \textbf{Weil representation} $\rho_M$ is defined for a lattice $M$  of signature $(b_+, b_-)$ by
\begin{align}
  \rho_A(T)(\bold{e}_x) & := e((x,x)/2)\bold{e}_x \label{WeilTdef}\\
  \rho_A(S)(\bold{e}_x) & := \frac{\xi_8^{b_- - b_+}}{\vert D(M) \vert^{1/2}} \sum_{y \in D(M)} e(-(x,y)) \bold{e}_y, \label{WeilSdef}
\end{align}
where $\bold{e}_x \in \CC[D(M)]$ is the basis element of the group algebra $\CC[D(M)]$ corresponding to $x \in D(M)$ and $e(y):=e^{2 \pi i y}$ for $y \in \CC$.
\end{defn}
If $M$ is a lattice of signature $(2,m)$ with $m \geq 3$ and $2U \subset M$ then there exists an injective $\opn{O}(D(L))$-equivariant linear map (which,  as explained in \cite{Ma}, is given by the Borcherds-Gritsenko lift \cite{GritsenkoAbelianK3, autograss}) 
\begin{equation}\label{cusplift}
S_l(\rho_M) \rightarrow S_k(\widetilde{\opn{O}}^+(M)),
\end{equation}
where $k = l + m/2 - 1$ and $l \in \frac{1}{2} \ZZ$ satisfies $l \equiv m/2 \bmod{ \ZZ}$.

If $\Gamma^* := \la \Gamma, \widetilde{\opn{O}}^+(L) \ra$ then $\widetilde{\opn{O}}^+(L) \trianglelefteq \Gamma^*$ and, by Proposition \ref{modgp}, $G:=\Gamma^*/\widetilde{\opn{O}}^+(L) \cong C_2$ is generated by the element $\sigma_{\underline{w}}$.
If $H = \la \rho(-I, i), \sigma_{\underline{w}} \ra$ then the restriction of $\rho_L$ to the subspace $S_2(\rho_L)^H \subset S_2(\rho_L)^G$  satisfies the conditions of Theorem \ref{vvdimform}, which we use in Theorem \ref{cuspthm} and Corollary \ref{weight3cuspformcor} to obtain conditions for $S_2(\rho_L)^H \neq 0$ and so, by  \eqref{cusplift}, for $S_3(\Gamma) \neq 0$. 
\subsection{The representation $\rho_L^{inv}$}
Let $\rho_L^{inv}$ be the restriction of $\rho_L$ to $\CC[D(L)]^H$.
\begin{deflem}\label{Hinvsdeflem}
    If $\CC[D(L)] \cong C_{2(n+1)} \op C_{2d}$ let 
    \[
    \ic := \{(a,b) \in \ZZ^2 \mid 0 < a < n+1 \text{ and } 0 \leq b \leq d \}
    \] 
    and
    \[
    \nu_{(a,b)} =
    \begin{cases}
      \bold{e}_{(a,b)}
      - \bold{e}_{(-a,-b)}
      + \bold{e}_{(a,-b)}
      - \bold{e}_{(-a,b)} & \text{if $0<a<n+1$ and $0<b<d$ } \\
      \bold{e}_{(a,b)} - \bold{e}_{(-a, -b)} & \text{if $b = 0$ or $d$}.
    \end{cases}
    \]
    Then $\{\nu_{(a,b)} \mid (a,b) \in \ic \}$ is a basis for $\CC[D(L)]^H$.
  \end{deflem}
  \begin{proof}
    Immediate from definition.
    \end{proof}
\begin{lemma}\label{rhoinvTlemma}
    $\rho_L^{inv}(T) \nu_{(a,b)} = e((a,b)^2/2) \nu_{(a,b)}$.
  \end{lemma}
  \begin{proof}
    Immediate from \eqref{WeilTdef} as $(a,b)^2 = (\pm a, \pm b)^2$.
  \end{proof}
  \begin{lemma}\label{rhoinvSSTlemma}
    On the basis of Definition/Lemma \ref{Hinvsdeflem},
   if $(a,b) \in \ic$ then the $\nu_{(a,b)}$-th coordinate of $\rho_L^{inv}(S)\nu_{(a,b)}$ is given by
    \[
    \lambda \times 
    \begin{cases}
      \sum_{\delta_1, \delta_2 = \pm 1}
      \delta_1 e \left ( \frac{\delta_1 a^2}{2(n+1)} + \frac{\delta_2 b^2}{2d} \right )
      & \text{if $0<b<d$} \\
      e \left ( \frac{a^2}{2(n+1)} + \frac{b^2}{2d} \right )
      - e \left ( \frac{-a^2}{2(n+1)} + \frac{b^2}{2d} \right ) 
      & \text{if $b = 0$ or $d$;}
 \end{cases}
    \]
    and the $\nu_{(a,b)}$-th coordinate of $\rho_L^{inv}(ST) \nu_{(a,b)}$ is given by
    \[
    \lambda \times 
    \begin{cases}
      \sum_{\delta_1, \delta_2 = \pm 1}
      \delta_1 e \left ( \frac{(2\delta_1 - 1) a^2}{4(n+1)} + \frac{(2\delta_2 - 1) b^2}{4d} \right )
      & \text{if  $0<b<d$} \\
      e \left ( \frac{a^2}{4(n+1)} + \frac{b^2}{4d} \right )
      - e \left ( \frac{-3a^2}{4(n+1)} + \frac{b^2}{4d} \right )
      & \text{if $b = 0$ or $d$,}
 \end{cases}
 \]
    where $\lambda = \frac{i}{2\sqrt{d(n+1)}}$.
  \end{lemma}
  \begin{proof}
    Routine calculation using \eqref{WeilSdef} and Definition/Lemma \ref{Hinvsdeflem}.
  \end{proof}
\subsection{Bounds for traces}
We now bound $\opn{tr}\rho_L^{inv}(S)$ and $\opn{tr}\rho_L^{inv}(ST)$.
For the convenience of the reader, we begin by summarising some standard results on quadratic Gauss sums \cite{GaussJacobiSums}. 
For $a,b,c \in \ZZ$ and $b>0$, we let $\GG(a,b,c)$ denote the generalised quadratic Gauss sum 
\[
\GG(a,b,c):=\sum_{m=0}^{c} e \left ( \frac{am^2}{b} \right )
\]
and we use $\GG(a,b)$ to denote the classical quadratic Gauss sum $\GG(a,b,b-1)$.
We will often use the elementary property 
\begin{equation}\label{NonCoprimeGaussSum} 
\GG(a,b) = (a,b) \GG \left ( \frac{a}{(a,b)}, \frac{b}{(a,b)} \right )
\end{equation}
for $(a,b) \neq 1$ and $b>0$.
\begin{thm}[{\cite{Gauss, GaussJacobiSums}}]\label{GaussSumThm}
If $(a,b)=1$ then 
\[ 
\GG(a,b) =
\begin{cases}
  0 & \text{if $b \equiv 2 \bmod{ 4}$} \\
  \delta_b \sqrt{b} \left ( \frac{a}{b} \right ) & \text{if $b$ is odd} \\
  (1+i) \delta_a^{-1} \sqrt{b} \left ( \frac{b}{a} \right ) & \text{if $a$ is odd and $4 \vert b$,}
  \end{cases}
\] 
where
$\delta_x = 1$ if $x \equiv 1 \bmod{4}$, $\delta_x=i$ if $x \equiv 3 \bmod{4}$
and $( \frac{.}{n} )$ is the Jacobi symbol.
\end{thm}
\begin{thm}[{\emph{(Reciprocity for generalised quadratic Gauss sums} \cite[Theorem 1.2.2]{GaussJacobiSums})}]\label{reciprocitythm}
If $a,b \in \ZZ \backslash \{0 \}$ where $b>0$ and $ab$ is even, then
\[ \sum_{m=0}^{b-1} e^{\pi i \frac{am^2}{b}} = e^{\opn{sgn}(a) \pi i / 4} \left \vert \frac{b}{a} \right \vert^{1/2}  \sum_{m=0}^{\vert a \vert - 1} e^{- \pi i \frac{b m^2}{a}}.
\]
\end{thm}
\begin{lemma}\label{agausssumlemma}
  If $a$ and $d$ are positive integers then 
  \[
\GG(a, 4d, d-1)
  =
  \frac{1}{2}
  \left (
  1 - e \left (\frac{a d}{4} \right ) +   \xi_8 \GG(-d, a) \sqrt{\frac{2d}{a}}  \right ).
  \]
\end{lemma}
\begin{proof}
By Theorem \ref{reciprocitythm}, 
  \[
  \sum_{m=0}^{2d - 1} e^{\pi i (a m^2/2d)} = \xi_8  \sqrt{2d/a} \sum_{m=0}^{a-1} e^{-2\pi i dm^2/a}.
  \]
  As
  \begin{align*}
    \sum_{m=0}^{2d-1} e^{\pi i (am^2/2d)}
    = & \sum_{m=0}^{d-1} e^{\pi i (am^2/2d)} + \sum_{m=1}^{d} e^{\pi i (a(2d-m)^2 / 2d)} \\
    = & -1 + e^{a \pi i d / 2} + 2  \sum_{m=0}^{d-1} e^{\pi i (am^2/2d)}
  \end{align*}
  then
  \[
  \sum_{m=0}^{d-1} e \left ( \frac{am^2}{4d} \right ) = \frac{1}{2}
  \left (
  1 - e \left (\frac{a d}{4} \right ) + \xi_8 \sqrt{2d/a} \sum_{m=0}^{a-1} e \left (\frac{-d m^2}{a} \right )
  \right )
  \]
  and the result follows.
\end{proof}
\begin{lemma}\label{trrhoSlemma}
  If both $d, n>0$ then $\opn{Re} \opn{tr} \rho_L^{inv}(S) = 0$. \end{lemma}
\begin{proof}
  By Lemma \ref{rhoinvSSTlemma},
  \begin{align*}
    \lambda^{-1} \opn{tr}\rho_L^{inv}(S) =
    &     \left(  \sum_{a=1}^n e \left (\frac{a^2}{2(n+1)} \right ) - e \left (\frac{-a^2}{2(n+1)} \right ) \right ) \biggl \{  2 + 2e(d/2)  \\
     & +\sum_{b=1}^{d-1} e \left ( \frac{b^2}{2d} \right ) + e \left ( \frac{-b^2}{2d} \right ) \biggr \},
  \end{align*}
  which is the product of a purely imaginary term and a purely real term.
\end{proof}
\begin{lemma}\label{trrhoSTlemma}
  We have
  \begin{align*}
  \vert \opn{tr} \rho_L^{inv}(ST) \vert  
  \leq \frac{1}{2} & \left ( 2/\sqrt{d(n+1)} + 1/\sqrt{2d} + \sqrt{3/2d} \right ) \\
  &  \times \left ( 4/\sqrt{d(n+1)} + 1/\sqrt{2(n+1)} + \sqrt{3/2(n+1)} \right ).
\end{align*}
\end{lemma}
\begin{proof}
 By Lemma \ref{rhoinvSSTlemma} we have,
 \begin{align*}
   \opn{tr} \rho_L^{inv}(ST)
   = &
   \frac{i}{2 \sqrt{d(n+1)}}
   \Biggl \{
   \left (
   \sum_{\delta_1, \delta_2 = \pm 1}
   \sum_{a=1}^n \sum_{b=1}^{d-1}
   \delta_1 e \left ( \frac{(2\delta_1 - 1) a^2}{4(n+1)} + \frac{(2\delta_2 - 1) b^2}{4d} \right )
   \right ) \\
   & + ( 1 + e(d/4))
   \sum_{a=1}^{n}
   e \left ( \frac{a^2}{4(n+1)}  \right )
   - e \left ( \frac{-3a^2}{4(n+1)}  \right )
   \Biggr \} \\
   = &
   \frac{i}{2 \sqrt{d(n+1)}}
   \Biggl \{
   \sum_{a=1}^n
   \left (
   e \left ( \frac{a^2}{4(n+1)} \right )
   -
   e \left ( \frac{-3a^2}{4(n+1)} \right )
  \right )
  \Biggr \} \\
  & \times
  \Biggl \{
  ( 1 + e(d/4)) + 
  \sum_{b=1}^{d-1}
  \left (
  e \left ( \frac{b^2}{4d} \right ) +  e \left ( \frac{-3b^2}{4d} \right ) \right ) 
  \Biggr \} \\
  = &  
   \frac{i}{2 \sqrt{d(n+1)}}
    \Biggl \{
   \sum_{a=1}^n
   \left (
   e \left ( \frac{a^2}{4(n+1)} \right )
   -
   e \left ( \frac{-3a^2}{4(n+1)} \right )
  \right )
  \Biggr \} \\
  & \times 
  \Biggl \{
  \GG(1, 4d, d-1) + \GG(-3, 4d, d-1) + e(d/4) - 1
  \Biggr \}.
 \end{align*}
 By Theorem \ref{GaussSumThm} and Lemma \ref{agausssumlemma},
 \begin{center}
   \begin{tabular}{c c c}
     $\vert \GG(1, 4d, d-1) - 1 \vert \leq 1 + \sqrt{d/2} $
     & and &
     $\vert \GG(3, 4d, d-1) -1 \vert \leq 1 + \sqrt{3d/2}$.
     \end{tabular}
   \end{center}
 Therefore, 
 \begin{align*}
   \vert \opn{tr} \rho_L^{inv}(ST) \vert
   \leq &
   \frac{1}{2 \sqrt{d(n+1)}}
   \vert \GG(1, 4(n+1), n) - \GG(-3, 4(n+1), n) \vert \\
   & \times      \vert \GG(1, 4d, d-1) + \GG(-3, 4d, d-1) + e(d/4) - 1 \vert \\
\leq &
     \frac{1}{2 \sqrt{d(n+1)}} \left (2 + \sqrt{(n+1)/2} + \sqrt{3(n+1)/2} \right ) \left (4 + \sqrt{d/2} + \sqrt{3d/2} \right ), \end{align*}
 from which the result follows.
\end{proof}
\begin{lemma}\label{aisolemma}
  In \eqref{vvdim}, we have 
  $\vert \alpha(\rho_L^{inv}) \vert \leq  ( 1+ 2^{\nu(d)}  ) ( n+2  )$.  
\end{lemma}
\begin{proof}
  By Lemma \ref{rhoinvTlemma}, it suffices to count solutions to
  \begin{equation}\label{isoeq}
    \frac{a^2}{4(n+1)} + \frac{b^2}{4d} \equiv 0 \bmod{ \ZZ}
  \end{equation}
  for $(a,b) \in \ic$.
  If \eqref{isoeq} is satisfied for fixed $a$ then $b^2 \equiv -a^2 d(n+1)^{-1} \bmod{d}$, which has  at most $1+2^{\nu(d)}$ solutions.
  The bound then follows by noting there are at most $n+2$ choices for $a$.
\end{proof}
\subsection{Bounds for $\sum_{j=1}^m \BB_1(\lambda_j)$}
We now obtain bounds for the sum $\sum_{j=1}^m \BB_1(\lambda_j)$ in \eqref{vvdim}, in terms of 
\[
\BB(\mu, d) := \sum_{x=0}^{d} \BB_1 \left ( \mu - \frac{x^2}{4d} \right ),
\]
where $\mu \in \QQ$ and $d \in \ZZ_{>0}$.
\begin{lemma}\label{Budlemma}
  If 
  $\mu = \mu_1/\mu_2$ 
  where 
  $(\mu_1, \mu_2)=1$ 
  and 
  $\mu_2>0$, 
  let 
  $N:= \opn{lcm}(\mu_2, 4d)$
  and
  $k = N/4d$.
  Then, 
  \[
  \vert \BB(\mu, d) \vert
  \leq
  \vert \BB_1(\mu - d/4) \vert
  + \frac{1}{8 \pi} \left (
  9 + 4k + 8 \opn{log}(N) + 2^{4+\nu(4d)}\sqrt{d} ( 2k(1+ \opn{log}(4d))+1)
  \right ).
\]
\end{lemma}
\begin{proof}
  We closely follow \cite[Lemma 5]{BruinierPicard}. 
As the function $\BB_1$ is $1$-periodic with Fourier expansion
  \[
  \BB_1(x) = -\frac{1}{2 \pi i} \sum_{\substack{n \in \ZZ \\ n \neq 0 }} \frac{e(nx)}{n} 
  \]
  then 
  \begin{align*}
    \BB(\mu, d)
    = & - \frac{1}{2 \pi i} \sum_{x=0}^{d} \sum_{\substack{n \in \ZZ \\ n \neq 0 }} \frac{1}{n} e \left ( n \mu - \frac{nx^2}{4d} \right ) \\
    =& \BB_1 \left (\mu - \frac{d}{4} \right ) - \frac{1}{2 \pi i} \sum_{x=0}^{d-1} \sum_{\substack{n \in \ZZ \\ n \neq 0 }} \frac{1}{n} e \left ( n \mu - \frac{nx^2}{4d} \right ), \\
    \intertext{and under the change of index $n \mapsto -n$,}
    = & \BB_1\left ( \mu - \frac{d}{4} \right ) + \frac{1}{2 \pi i}  \sum_{\substack{n \in \ZZ \\ n \neq 0 }} \frac{1}{n} e(-n \mu) \GG(n, 4d, d-1). \\
 \intertext{As $\BB_1(\mu, d) \in \RR$, then}
    \BB(\mu, d)
  = & \BB_1\left ( \mu - \frac{d}{4} \right ) + \frac{1}{2 \pi} \sum_{\substack{n \in \ZZ \\ n \neq 0}} \frac{1}{n}
  \opn{Im} \left ( e(-n \mu) \GG(n, 4d, d-1) \right ), \\
  \intertext{and as $\GG(-n, 4d, d-1) = \overline{\GG(n, 4d, d-1)}$ then}
  \BB(\mu, d)
  = & \BB_1\left ( \mu - \frac{d}{4} \right ) + \frac{1}{\pi} \sum_{n=1}^{\infty} \frac{1}{n} \opn{Im} \left ( e(-n \mu) \GG(n, 4d, d-1) \right ).
  \end{align*}
  By definition, $e((n+N)\mu) \GG((n+N), 4d, d-1) = e(n \mu) \GG(n, 4d, d-1)$ and $\opn{Im}(e(N \mu) \GG(N, 4d, d-1)) = 0$.
  Therefore, 
  \begin{align*}
    \BB(\mu, 2d) - \BB_1\left ( \mu - \frac{d}{4} \right ) 
    = & \frac{1}{\pi } \sum_{m=0}^{\infty} \sum_{j=1}^{N-1} \frac{1}{Nm + j} \opn{Im} \left ( e(-j \mu) \GG(j, 4d, d-1) \right ) \\
    = & \frac{1}{2 \pi} \sum_{m=0}^{\infty} \sum_{j=1}^{N-1} \biggl (
    \frac{1}{Nm + j} \opn{Im} \left ( e(-j \mu) \GG(j, 4d, d-1) \right ) \\
    & +
    \frac{1}{N(m+1) - j} \opn{Im} \left ( e(j \mu) \GG(-j, 4d, d-1) \right ) \biggr ) \\
    = & \frac{1}{2 \pi} \sum_{m=0}^{\infty} \sum_{j=1}^{N-1}
    \left ( \frac{1}{Nm + j } - \frac{1}{N(m+1) - j} \right )
    \opn{Im} \left ( e(-j \mu) \GG(j, 4d, d-1) \right ).
    \end{align*}
We obtain the expression 
  \[
  \BB(\mu, d) =  \lambda_1 + \lambda_2 + \BB_1\left ( \mu - \frac{d}{4} \right ),
  \]
  where
  \begin{align*}
  \lambda_1 := &
    \frac{1}{2 \pi} \sum_{j=1}^{N-1} \left ( \frac{1}{j} - \frac{1}{N-j} \right)
    \opn{Im}( e(-j \mu) \GG(j, 4d, d-1)) \\
\lambda_2 := &
    \frac{1}{2 \pi} \sum_{m=1}^{\infty} \sum_{j=1}^{N-1}
    \left ( \frac{1}{Nm + j }  - \frac{1}{N(m+1) - j} \right )
    \opn{Im} (e(-j \mu) \GG(j, 4d, d-1)).
  \end{align*}
  We now bound $\lambda_1$ and $\lambda_2$.
  As $\GG(j, 4d, d-1) = \GG( N+j, 4d, d-1)$, then 
  \begin{align*}
    \vert \lambda_1 \vert \leq &
    \frac{1}{2 \pi} \sum_{j=1}^{N-1} \frac{1}{j} \vert \GG(j, 4d, d-1) \vert
    +
    \frac{1}{2 \pi} \sum_{j=1}^{N-1} \frac{1}{N-j} \vert \GG(j, 4d, d-1) \vert \\
    \leq & \frac{1}{\pi} \sum_{j=1}^{N-1} \frac{1}{j} \vert \GG(j, 4d, d-1) \vert.
  \end{align*}
  By Lemma \ref{agausssumlemma},
  \begin{align}
    \vert \GG(j, 4d, d-1) \vert
    \leq & 1 + \frac{1}{2}\vert \GG(-d, j)  \vert \left \vert \frac{2d}{j} \right \vert^{1/2} \nonumber \\
    \leq & 1 + \frac{1}{2} (d,j) \left \vert \GG \left ( \frac{-d}{(d,j)}, \frac{j}{(d,j)} \right ) \right \vert \left \vert \frac{2d}{j} \right \vert^{1/2} \nonumber \\
    \intertext{and by Theorem \ref{GaussSumThm},}
    \vert \GG(j, 4d, d-1) \vert \leq & 1 + \sqrt{d(d,j)}, \label{Gj4dBound}
    \end{align}
  (where the last two lines follow from \eqref{NonCoprimeGaussSum}).
  Therefore,
  \[
    \vert \lambda_1 \vert
    \leq  \frac{1}{\pi} \sum_{j=1}^{N-1} \frac{1}{j} + \frac{\sqrt{d}}{\pi} \sum_{j=1}^{N-1} \frac{ (j, 4d)^{1/2}}{j}, 
    \]
    and by noting that  $\sum_{n=1}^N \frac{1}{n} \leq 1 + \opn{log}(N)$, we obtain
    \[
    \vert \lambda_1 \vert
    \leq
    \frac{1 + \opn{log}(N)}{\pi} +
    \frac{k-1}{2 \pi} + 
    \frac{k \sqrt{d}}{\pi} \sum_{j=1}^{4d - 1} \frac{(4d, j)^{1/2}}{j}.
    \]
    By elementary considerations, each element of $\{1, \ldots, 4d-1 \}$ can be expressed uniquely as $xy$ where $x \vert 4d$, $x \neq 4d$ and $1 \leq y \leq 4d/x$ with $(y, 4d/x)=1$.
    Therefore, 
    \begin{align*}
      \vert \lambda_1 \vert \leq &
      \frac{1 + \opn{log}(N)}{\pi}
      + \frac{k-1}{2 \pi}
      + \frac{k \sqrt{d}}{\pi}
      \sum_{\substack{x \vert 4d \\ x \neq 4d}}
      \sum_{\substack{y = 1 \\ (y, 4d/x) = 1}}^{4d/x}
      \frac{\sqrt{x}}{xy} \\
      \leq &
      \frac{1 + \opn{log}(N)}{\pi}
      + \frac{k-1}{2 \pi}
      + \frac{k \sqrt{d}}{\pi}
      \sum_{\substack{x \vert 4d \\ x \neq 4d}} ( 1 + \opn{log}(4d/x) ) x^{-1/2} 
       \\
      \leq &
      \frac{1 + \opn{log}(N)}{\pi}
      + \frac{k-1}{2 \pi}
      + \frac{k \sqrt{d}}{\pi} ( 1 + \opn{log}(4d)) \sigma_{-1/2}(4d),  
    \end{align*}
    where $\sigma_r(d)$ is the divisor function
    \[
    \sigma_r(n) = \sum_{d \vert n} d^r.
    \]
By considering values at prime powers, we have $\sigma_{-1/2}(4d) \leq 2^{\nu(4d) + 2}$.
    Therefore,
    \[
    \vert \lambda_1 \vert
    \leq
    \frac{1 + \opn{log}(N)}{\pi}
    + \frac{k-1}{2 \pi}
    + \frac{4k \sqrt{d}}{\pi } (1 + \opn{log}(4d)) 2^{\nu(4d)}.
    \]
    We now bound $\lambda_2$.
    By definition,
    \begin{align*}
      \vert \lambda_2 \vert
      \leq &
      \frac{1}{2 \pi} \sum_{m=1}^{\infty} \sum_{j=1}^{N-1}
      \left ( \frac{N-2j}{(Nm+j)(N(m+1) - j)} \right )
      \vert \GG(j, 4d, d-1) \vert. \\
      \intertext{As $\vert N-2j \vert < N$ and $N^2m(m+1) + Nj - j^2 \geq N^2m(m+1)$, then}
      \vert \lambda_2 \vert \leq &
      \frac{1}{2 \pi N} \sum_{m=1}^{\infty} \sum_{j=1}^{N-1} \frac{\vert \GG(j, 4d, d-1) \vert }{m(m+1)}, \\
      \intertext{and as $1 = \sum_{m=1}^{\infty} \frac{1}{m(m+1)}$ then,}
      \vert \lambda_2 \vert
      \leq &
      \frac{1}{2 \pi N} \sum_{j=1}^{N-1} \vert \GG(j , 4d, d-1) \vert. \\
      \intertext{By \eqref{Gj4dBound},}
      \vert \lambda_2 \vert \leq &
      \frac{N-1}{2 \pi N} + \frac{(k-1)d}{2 \pi N}
      + \frac{d}{2 \pi N} \sum_{j=1}^{4d-1} (d,j)^{1/2} \\
      \leq & \frac{1}{2 \pi N} + \frac{\sqrt{d}}{2 \pi N} \left ( (k-1) \sqrt{d} + k \sum_{j=1}^{4d-1}(d,j)^{1/2} \right ). \\
      \intertext{As for $\lambda_1$, }
      \vert \lambda_2 \vert
      \leq & \frac{1}{2 \pi N} + \frac{\sqrt{d}}{2 \pi N} \left ( (k-1)\sqrt{d} + k \sum_{\substack{x \vert 4d \\ x \neq 4d}} \sum_{\substack{y = 1 \\ (y,4d/x) = 1}}^{4d/x} \sqrt{x} \right ) \\
      \leq & \frac{1}{2 \pi N} + \frac{\sqrt{d}}{2 \pi N} \left ( (k-1)\sqrt{d} + 4dk + \sum_{\substack{x \vert 4d \\ x \neq 4d}} \frac{1}{\sqrt{x}} \right ) \\
      \leq & \frac{1}{2 \pi N} + \frac{\sqrt{d}}{2 \pi N} \left ( (k-1)\sqrt{d} + 4dk \sigma_{-1/2}(4d) \right ) \\
      \leq & \frac{1}{8 \pi} \left ( 5 + 4d^{1/2}\sigma_{-1/2}(4d) \right ).
    \end{align*}
    Therefore,
    \[
    \vert \BB(\mu, d) \vert
    \leq
    \vert \BB_1(\mu - d/4) \vert
    + \frac{1}{8 \pi} \left (
    9 + 4k + 8 \opn{log}(N) + 2^{4+\nu(4d)}\sqrt{d} ( 2k(1+ \opn{log}(4d)) + 1))
    \right ).
    \]
\end{proof}
\subsection{Bounds for low-weight cusp forms}
We now show that low-weight cusp forms exist for all $n>0$ when $d \gg 0$, obtaining an explicit bound for $d$ when $n=2$.
\begin{thm}\label{cuspthm}
  If $n$ is fixed and $d \gg 0$ then $S_3(\Gamma) \neq 0$.
\end{thm}
\begin{proof}
  By Theorem \ref{vvdimform} and Definition/Lemma \ref{Hinvsdeflem},  with $\rho=\rho^{inv}_L$, $k = 0 $ and $m = n(d+1)$, we have 
  \begin{align*}
    \opn{dim} S_2(\rho_L^{inv})
    = & \frac{n(d+1)}{12} - \frac{1}{4} \opn{Re}( \opn{tr} \overline{\rho}^{inv}_L(S))
    - \frac{2}{3 \sqrt{3}} \opn{Re}( e^{\pi i /6} \opn{tr}(\overline{\rho}^{inv}_L(ST))) \\
    & - \frac{1}{2} a(\overline{\rho}^{inv}_L) + \sum_j \BB_1(\lambda_j) + \opn{dim}M_0(\overline{\rho}_L^{inv}).
  \end{align*}
  By Lemma \ref{trrhoSlemma} $\opn{Re}( \opn{tr} \overline{\rho}^{inv}_L(S)) = 0$,
  by Lemma \ref{trrhoSTlemma} $\opn{tr}(\overline{\rho}^{inv}_L(ST))) = O(\sqrt{d})$,
  by Lemmas \ref{aisolemma} and \ref{2nulemma} $a(\overline{\rho}^{inv}_L) = O(d^{\epsilon})$
  and by Lemmas \ref{Budlemma} and \ref{2nulemma} $\sum_j \BB_1(\lambda_j) = O(d^{1/2 + \epsilon}\opn{log}(d))$. 
Therefore, for $\epsilon<1/2$, we have
  \[
    \opn{dim} S_2(\rho_L^{inv})
    = \frac{n(d+1)}{12} +\opn{dim} M_0(\overline{\rho}_L^{inv}) +  O(d^{1/2 + \epsilon} \opn{log}(d))
    \]
    and the result follows from \eqref{cusplift}.
\end{proof}
\begin{cor}\label{weight3cuspformcor}
  Let $n=2$.
Then, for all $d>2209016321562407$ and prime $d>11943004$,  $S_3(\Gamma) \neq 0$. 
\end{cor}
\begin{proof}
  By Lemma \ref{Budlemma} and \ref{2nulemma},  
  \begin{align*}
    \vert \BB(\mu, d) \vert
    \leq 
    \frac{1}{8 \pi} \left ( (32)2^{\nu(d)}d^{1/2}(12 \opn{log}(2) + 6 \opn{log}(d) + 7) + 8\opn{log}(12) + 8\opn{log}(d) + 21 \right ) + \frac{1}{2}.
  \end{align*}
    If $d>5$ then, from Lemma \ref{trrhoSTlemma}, $\vert \opn{Re}(e^{\pi i / 12} \opn{tr}(\overline{\rho}(ST)) \vert \leq (3 \sqrt{3}) / 2$.
    In \eqref{vvdim},
    \[ 
    \sum_j \BB_1(\lambda_j) \leq \vert \BB(1/12, d) \vert + \vert \BB(1/3, d) \vert
    \] 
    and by Theorem \ref{vvdimform} and Lemma \ref{trrhoSlemma} and \ref{aisolemma} we have,
  \begin{align}\label{S2bound}
    \opn{dim}(S_2(\rho_L^{inv})) 
    \geq &
    - \frac{1}{4 \pi}(1200 d^{2/3}(12 \opn{log}(2) + 6\opn{log}(d) + 7) + 8\opn{log}(12) + 8\opn{log}(d) + 21)
  \nonumber \\
  &
  + \frac{d}{6}
  - 75 d^{1/6}
  - \frac{23}{6}.
  \end{align}
  We then locate the roots of \eqref{S2bound} using a computer (setting $2^{\nu(4d)}=2$ in the case of prime $d$). 
\end{proof}
 \section{Hirzebruch-Mumford volumes}\label{HMSec}
Let $L$ be an indefinite lattice of signature $(2,n)$ and let  $\Lambda \subset \opn{O}(L \otimes \QQ)$ be an  arithmetic subgroup.
 The Hirzebruch-Mumford proportionality principle \cite{HMvol} states that   
\[
\opn{dim} M_k(\Lambda, \chi) = \opn{Vol}_{HM}(\Lambda)k^m + O(k^{m-1}),
\]
where $\opn{Vol}_{HM}(\Lambda)$ is a constant known as the \emph{Hirzebruch-Mumford volume} of $\Lambda$.
In this section, we calculate the Hirzebruch-Mumford volumes of $\opn{O}(L)$ and $\opn{O}(K^{u,m}_{r,s}(k,l))$ in terms of local densities, following the approach of \cite{HMvolapps}. 
Throughout, we will use $\nu(x)$ to denote the number of prime divisors of $x \in \NN$ counted without multiplicity.
\begin{thm}[{\cite[Theorem 2.1]{HMvolapps}}]\label{GHSHMvol}
  If $L$ is an indefinite lattice of signature $(2, m)$ then 
  \[
  \opn{Vol}_{HM}(\opn{O}(L)) = \frac{2}{\vert \opn{Sg}(L) \vert} \vert \opn{det} L \vert^{(m+3)/2} \prod_{k=1}^{m+2} \pi^{-k/2} \Gamma(k/2) \prod_p \alpha_p(L)^{-1},
  \] 
  where $\alpha_p(L)$ is the local density of the lattice $L$ at the prime $p$ and $\vert \opn{Sg} (L) \vert$ denotes the number of spinor genera in $\opn{gen}(L)$.
\end{thm}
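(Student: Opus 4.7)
The plan is to compute $\opn{Vol_{HM}}(\opn{O}(L))$ by combining the Hirzebruch--Mumford proportionality principle with the Smith--Minkowski--Siegel mass formula and strong approximation for the spin group. First I would recall that the Hirzebruch--Mumford principle identifies $\opn{Vol_{HM}}(\Lambda)$ with a definite multiple of the volume of $\Lambda \backslash \opn{O}(2,m)/K$ with respect to an invariant measure on the Hermitian symmetric space $\dc_L$, the constant of proportionality being the ratio of Chern numbers on the compact dual. This reduces the problem from a question about modular forms to a computation of the covolume of $\opn{O}(L)$ in the real Lie group $\opn{O}(2,m)$.

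Next I would adelise the problem. The Tamagawa number of the semisimple group $\opn{O}(V)$, $V = L \otimes \QQ$, is well known to equal $2$, which accounts for the leading factor $2$ in the formula. Using a canonical Tamagawa measure one has
\begin{equation*}
\opn{vol}(\opn{O}(V)_{\QQ} \backslash \opn{O}(V)_{\mathbb{A}}) = 2 = \opn{vol}_{\infty} \cdot \prod_p \opn{vol}_p,
\end{equation*}
and the task is to convert this adelic volume into the volume of the single arithmetic quotient associated with $L$. Strong approximation for $\opn{Spin}(V)$, together with the genus/spinor-genus description of the class set of the orthogonal group, shows that $\opn{O}(V)_{\mathbb{A}}$ decomposes into $|\opn{sg}(L)|$ orbits under $\opn{O}(V)_{\QQ} \cdot K_\infty \cdot \prod_p \opn{O}(L \otimes \ZZ_p)$, and each orbit contributes equally: this is precisely what produces the factor $|\opn{sg}(L)|^{-1}$.

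Then I would evaluate the local factors. Each finite local volume is, by definition of Kitaoka's local density $\alpha_p(L)$, given by $\alpha_p(L) \cdot |\det L|_p^{-(m+3)/2}$ once the normalisation of Haar measure is fixed consistently with the Tamagawa measure; taking the product over all primes and using $\prod_p |\det L|_p^{-1} = |\det L|$ produces both the $|\det L|^{(m+3)/2}$ and the $\prod_p \alpha_p(L)^{-1}$ factors. The archimedean local factor contributes $\prod_{k=1}^{m+2} \pi^{-k/2}\Gamma(k/2)$: this is the standard Minkowski/Macdonald formula for the volume of the compact orthogonal group $\opn{O}(m+2)$, which enters because of the $KAK$ decomposition of $\opn{O}(2,m)$ together with the fact that $K \cong (\opn{O}(2)\times\opn{O}(m))/\{\pm 1\}$. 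Assembling these pieces reproduces the stated formula.

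The main obstacle is the bookkeeping of normalisations. Three different conventions have to be made compatible: the Hirzebruch--Mumford normalisation (set via the Bergman metric / Chern forms on the compact dual), the Tamagawa normalisation (which fixes absolute constants via a product formula), and Kitaoka's convention for $\alpha_p(L)$ (whose scaling by powers of $|\det L|_p$ is not unique in the literature). Matching these conventions up, and in particular verifying that the spinor genus factor appears with exponent $1$ rather than being doubled by the exceptional behaviour of the orthogonal group under strong approximation, is the technical heart of the argument; everything else is a packaging of standard computations of Siegel, Harish-Chandra, and Minkowski.
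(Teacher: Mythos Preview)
The paper does not prove this theorem at all: it is quoted verbatim as Theorem 2.1 of \cite{HMvolapps} and used as a black box throughout Section~\ref{HMSec}. There is therefore no ``paper's own proof'' to compare your proposal against.

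That said, your outline is a faithful sketch of how the result is actually established in the cited reference of Gritsenko--Hulek--Sankaran: they too derive the formula from the Tamagawa number $\tau(\opn{O})=2$, strong approximation for $\opn{Spin}$, the identification of the adelic class set with the spinor genera, and Kitaoka's normalisation of local densities, with the archimedean contribution computed via the compact dual. Your diagnosis of the main difficulty---tracking normalisations between the Hirzebruch--Mumford, Tamagawa, and Kitaoka conventions---is exactly right, and is where the work in \cite{HMvolapps} lies. So your proposal is correct in spirit; just be aware that in the context of the present paper no argument is required, only the citation.
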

As noted in \cite{HMvolapps} (adopting the convention of \cite{Ma}), if $\Lambda \subset \opn{O}(L)$ is of finite index and $L$ contains a copy of the hyperbolic plane $U$ then,
\begin{equation}\label{subgpHMvol}
  \opn{Vol}_{HM}(\Lambda) =
  \vert \la \opn{O}(L), -I \ra :\la \Lambda, -I \ra \vert \opn{Vol}_{HM}(\opn{O}(L)).
  \end{equation}
\subsection{Local densities}
  We recall the calculation of the local densities $\alpha_p(L)$ for an indefinite lattice $L$ of signature $(2,m)$, following \cite{HMvolapps}. These results can also be found in \cite{Kitaoka}.

Suppose 
\begin{equation}\label{jordandecomp}
  L \otimes \ZZ_p =\bigoplus_{j \in \ZZ} L_j
\end{equation}
is the Jordan decomposition of the $\ZZ_p$-lattice $L \otimes \ZZ_p$ where, if $n_j:=\opn{rank}L_j$, $N_j:=p^{-j} L_j \in \opn{GL}(\ZZ_p, n_j)$ if $n_j>0$ and $N_j:=0$ otherwise.

\noindent {$\bf{p>2.}$}
If $p$ is an odd prime then $\alpha_p(L)$ is given by
\[
\alpha_p(L) = 2^{s-1} p^w P_p(L) E_p(L),
\]
where $s$ is the number of non-zero factors in \eqref{jordandecomp}, 
\begin{equation*}
  P_p(L) := \prod_j P_p([n_j/2]) 
\end{equation*}
where 
\[
P_p(n) := 
  \begin{cases}
    \prod_{i=1}^n (1 - p^{-2i}) & \text{if $n \in \NN \backslash \{0\}$} \\
    1 & \text{if $n=0$} \\
  \end{cases}
\]
and $[n_j/2]$ denotes the integer part of $n_j/2$.
The term $w$ is given by
\[
w = \sum_j j n_j \left ( \frac{(n_j + 1)}{2} + \sum_{k>j} n_k \right ) 
\]
and $E_p(L)$ is defined by
\begin{equation*}
  E_p(L) = \prod_{L_j \neq 0} (1 + \chi(N_j/pN_j)p^{-n_j/2})^{-1}
\end{equation*}
where, for a finite quadratic space $W$ over $\FF_p$,
\[
\chi(W) =
\begin{cases}
  0 & \text{if $\opn{dim}(W)$ is odd} \\
  1 & \text{if $W$ is a hyperbolic space} \\
  -1 & \text{otherwise.}
  \end{cases}
\]
\noindent {$\bf{p=2}$.}
The formula needs to be modified when $p=2$. 
Over $\ZZ_2$, the lattices $N_j$ decompose as the orthogonal direct sum
\[
N_j = N_j^{even} \op N_j^{odd}
\]
where $N_j^{even}$ is even, $N_j^{odd}$ is odd and $\opn{rank}N_j^{odd} \leq 2$.
We let $E_j := \frac{1}{2}(1 + \chi(N_j^{even})2^{- \opn{rank} N_j^{even}/2})$ if both $N_{j-1}$ and $N_{j+1}$ are even and $N_j^{odd} \not \cong \la \epsilon_1 \ra \op \la \epsilon_2 \ra$ where $\epsilon_1 \cong \epsilon_2 \bmod{ 4}$; otherwise, we let $E_j:=1/2$.
We define 
\[
  q_j := 
  \begin{cases}
    0 & \text{if $N_j$ is even} \\
    n_j & \text{if $N_j$ is odd and $N_{j+1}$ is even} \\
    n_j + 1 & \text{if $N_j$ and $N_{j+1}$ are both odd,} \\
  \end{cases}
  \]
  and set $q:=\sum_j q_j$. 
  Then, if   
\[
  P_2(L) := \prod_j P_2(\opn{rank} N_j^{even}/2)
\]
and 
\[
  E_2(L) := \prod_j E_j^{-1},
\]
the local density $\alpha_2(L)$ is given by
\begin{equation*}
  \alpha_2(L) = 2^{m+1+w-q} P_2(L) E_2(L).
\end{equation*}
The calculation of $E_j$ can often be simplified by noting that \cite{Kitaoka} 
\[
  E_j =
  \begin{cases}
    1 & \text{if $N_{j-1} = N_j = N_{j+1} = \{0\}$} \\
    1 & \text{if $N_{j-1}=N_j=\{0\}$ and $N_{j+1}$ is even} \\
    1 & \text{if $N_{j-1}$ is even and $N_j = N_{j+1} = \{0\}$} \\
    \frac{1}{2} & \text{if $N_{j-1} = N_j = \{0\}$ and $N_{j+1}$ is odd} \\
    \frac{1}{2} & \text{if $N_{j-1}$ is odd and $N_j=N_{j+1} = \{0\}$}.
  \end{cases}
\]
\subsection{The Hirzebruch-Mumford volume of $\opn{O}(L)$}
\begin{proposition}\label{HMvolOL}
  If $L = 2U \op \la -6 \ra \op \la -2d \ra$, then
  \[
    \opn{Vol}_{HM}(\opn{O}(L))
    =  \frac{(12d)^{5/2}\pi^{3}}{137781} L(3, \left( \frac{-12d}{*} \right) )^{-1} 2^{-\nu(12d)} 
    \vert 12d \vert^{-1}_2 \vert 12 d \vert^{-1}_3 \alpha_2(L)^{-1} \alpha_3(L)^{-1} \prod_{p \vert 12d} (1-p^{-6}),
  \]
  where expressions for $\alpha_2(L)$ and $\alpha_3(L)$ are given in \eqref{alpha_2L} and \eqref{alpha_3L}.
\end{proposition}
\begin{proof}
  For prime $p$, let $L_p:=L \otimes \ZZ_p$ and define $a$ by $\vert 12 d \vert_p = p^{-a}$.
  
  \noindent $\mathbf{p=2.}$
  If $p=2$ then $D(L_2)=C_2 \op C_{2^{a-1}}$, $q_2(x,y) = -3x^2/2 - dy^2 / 2^{2a-3} \bmod{ 2 \ZZ_2}$ for $(x,y) \in D(L_2)$ and
  \[
  L_2 = 2U \op \la \theta_1 \ra (2) \op \la \theta_{a-1} \ra (2^{a-1}), 
  \]
  where $\theta_1, \theta_{a-1} \in \ZZ_p^*$.
  \begin{enumerate}[(i)]
  \item  If $a=2$ then $s=2$, $q=2$, $w=3$ and $P_2(L) = P_2( \opn{rank} N_0^{even}/2) P_2( \opn{rank} N_1^{even}/2)$.
  As $\opn{norm} N_1 = \ZZ_2$ then $P_2(\opn{rank} N_1^{even}/2) = P_2(0)$ and $P_2( \opn{rank} N_0^{even}/2) = P_2(2)$.
  Moreover, $E_i=1$ when $i<0$ or $i>2$,
  $E_0 = 1/2$, 
  $E_1 = 1$ if $d \not \equiv 3 \bmod{ 4}$,
  $E_1 = 1/2$ if $d \equiv 3 \bmod{ 4}$
  and $E_2=1/2$. 
  Therefore,
  \[
    E_2(L) = 
    \begin{cases}
      4 & \text{if $d \not \equiv 3 \bmod{ 4}$} \\
      8 & \text{if $d \equiv 3 \bmod{ 4}$}.
    \end{cases}
  \]
\item
  If $a>2$ then
  $w=a+1$, 
  $q=3$ if $a=3$, 
  $q=2$ if $a \geq 4$   
  and
  $P_2(L) = P_2(2) P_2(0)^2$.
  \item If $a=3$ then $E_{i} = 1$ if $i<-1$ or $i>3$,
  $E_{-1} = 1$,
  $E_0 = 1/2$,
  $E_1 = 1/2$,
  $E_2 = 1/2$,
  $E_3 = 1/2$ 
  and $E_2(L) = 16$.
  \item If $a=4$ then $E_{i} = 1$ if $i<-1$ or $i>4$,
  $E_{-1} = 1$, 
  $E_0 = 1/2$, 
  $E_1 = 1$, 
  $E_2 = 1/2$, 
  $E_3 = 1$,
  $E_4 = 1/2$  
  and $E_2(L) = 8$.
  \item If $a=5$ then $E_{i} = 1$ if $i<-1$ or $i>5$,
  $E_{-1} = 1$, 
  $E_0 = 1/2$, 
  $E_1 = 1$, 
  $E_2 = 1/2$, 
  $E_3 = 1/2$, 
  $E_4 = 1$, 
  $E_5 = 1/2$  
  and $E_2(L) = 16$.
\item
  If $a \geq 6$ then $E_2(L) = E_{-1}^{-1} E_0^{-1} E_1^{-1} E_2^{-1} E_{a-2}^{-1} E_{a-1}^{-1} E_a^{-1}$ and
  $E_{-1} = 1$, 
  $E_0 = 1/2$, 
  $E_1 = 1$, 
  $E_2 = 1/2$, 
  $E_{a-2} = 1/2$,  
  $E_{a-1} = 1 $, 
  $E_a = 1/2$  
  and $E_2(L) = 16$.
\end{enumerate}
  Therefore,
  \begin{equation}\label{alpha_2L}
    \alpha_2(L) =
    \begin{cases}
      256 P_2(2) & \text{if $a=2$ and $d \not \equiv 3 \bmod{ 4}$} \\
      512 P_2(2) & \text{if $a=2$ and $d \equiv 3 \bmod{ 4}$} \\
      2^{7+a} P_2(2) & \text{if $a=3$ or $4$} \\
      2^{8+a} P_2(2) & \text{if $a \geq 5$}.
    \end{cases}
  \end{equation}
  
  \noindent $\mathbf{p=3.}$
  Let $p=3$.
  If $a=1$ then 
  $D(L_3)=C_3$ and $q_3(x) = -2x^2/3 \bmod{\ZZ_3}$ for $x \in D(L_3)$;
  if $a=2$ then $D(L_3) = C_3 \op C_3$ and $q_3(x,y) = -2x^2/3   -2y^2d/3^2 \bmod{ 2 \ZZ_3}$ for $(x,y) \in D(L_3)$;  
  if $a>2$ then $D(L_3)=C_3 \op C_{3^{a-1}}$ and $q_3(x,y) = -2x^2/3  -2y^2 d/3^{2a-2} \bmod{\ZZ_3}$ for $(x,y) \in D(L_3)$.
  By Theorem 1.9.1 and  Corollary 1.9.3 of \cite{Nikulin},
  \begin{equation*}
    L_3 =
    \begin{cases}
      \la 1 \ra^{\op 4} \op \la \theta_0 \ra \op \la \theta_1 \ra (3) & \text{if $a=1$} \\
      \la 1 \ra^{\op 3} \op \la \theta_0 \ra \op \la \theta_1 \ra (3) \op \la \theta_1' \ra (3) & \text{if $a=2$} \\
      \la 1 \ra^{\op 3} \op \la \theta_0 \ra \op \la \theta_1 \ra (3) \op  \la \theta_{a-1} \ra (3^{a-1}) & \text{if $a>2$}
    \end{cases}
  \end{equation*}
  for $\theta_1$, $\theta_1'$, $\theta_a \in \ZZ_p^*$.
\begin{enumerate}[(i)]
\item  If $a=1$ then $s=2$, $w=1$, $P_3(L) = P_3(2)$ and $E_3(L)=1$ as $\chi(N_0)=\chi(N_1)=0$.
  \item  If $a=2$ then $s=2$, $w=3$ and $P_3(L) = P_3(2) P_3(1)$.
    As $N_0$ is obtained from $2U$ then $N_0$ is hyperbolic and $\chi(N_0)=1$. 
    As 
\[
    \chi(N_1)= 
    \left ( - \frac{4.3^{-1}.d}{3} \right )
  \] 
  then $E_3(L) = (1+3^{-2})^{-1} (1 + ( \frac{-4.3^{-1}d}{3} )3^{-1} )^{-1}$.
\item If $a>2$ then $s=3$, $w=a+1$ and $P_3(L) = P_3(2)$.
  As $N_0$ is hyperbolic, then $\chi(N_0)=1$ and $E_3(L) = (1 + 3^{-2})^{-1}$.
\end{enumerate}
Therefore, 
\begin{equation}\label{alpha_3L}
    \alpha_3(L) =
    \begin{cases}
      6 P_3(2) & \text{if $a=1$}\\
      54 P_3(2) P_3(1) (1 + 3^{-2})^{-1} (1 + ( \frac{-4.3^{-1} d }{3} ) 3^{-1})^{-1} & \text{if $a=2$} \\
      2^2 3^{a+1} P_3(2) (1+3^{-2})^{-1} & \text{if $a>2$.} 
    \end{cases}
  \end{equation}
\noindent $\mathbf{p>3.}$ If $p>3$ then, by Proposition 1.7.1 of \cite{Nikulin}, $D(L_p) = C_{p^a}$ with discriminant form $q_p(x)=-2dp^{-2a}x^2$ $\bmod{ \ZZ_p}$ for $x \in D(L_p)$.
  By Theorem 1.9.1 and Corollary 1.9.3 of \cite{Nikulin},
  \begin{equation*}
    L_p =
    \begin{cases}
      \la 1 \ra^{\op 5} \op \la \theta_0 \ra & \text{if $a=0$}\\
      \la 1 \ra^{\op 4} \op \la \theta_0 \ra \op \la \theta_a \ra (p^a) & \text{if $a>0$} \\
    \end{cases}
  \end{equation*}
  where $\theta_0$, $\theta_a \in \ZZ_p^*$.
  If $a=0$ then $s=1$, $P_p(L)=P_p(3)$, $w=0$ and 
  $E_p(L) = (1 + \chi(N_0)p^{-3})^{-1}$.
  By the classification of non-degenerate quadratic forms over $\FF_p$ (e.g. \cite[p.63]{Dieudonne}), $L_p / pL_p$ is hyperbolic if and only if
  \begin{equation*}
    \left ( \frac{\theta_0}{p} \right ) =  \left (\frac{-12d}{p} \right ) = 1,
  \end{equation*}
  where $\left ( \frac{*}{p} \right)$ denotes the Legendre symbol, 
  implying 
  \begin{equation*}
    \chi(N_0) =  \left ( \frac{-12d}{p} \right ).
  \end{equation*}
  If $a>0$ then $s=2$, $P_p(L)=P_p(2)$, $w=a$ and $\chi(N_0) = \chi(N_a) = 0$.
  Therefore, 
  \[
  E_p(L) =
  \begin{cases}
    (1 + \left ( \frac{- 12d}{p} \right) p^{-3})^{-1} & \text{if $a=0$} \\
    1 & \text{if $a>0$}
  \end{cases}
  \]
  and
\[ 
  \alpha_p(L) =
  \begin{cases}
    P_p(3)(1 + \left ( \frac{- 12d}{p} \right ) p^{-3} )^{-1} & \text{if $a = 0$ } \\
    2 p^a P_p(2) & \text{if $a>0$.}
  \end{cases}
\] 
We now calculate $\prod \alpha_p^{-1}$.
  For prime $p$, let
\begin{center}
\begin{tabular}{c c c }
    $\beta_p(L):=P_p(3)(1 + \left ( \frac{-12d}{p} \right )p^{-3} )^{-1}$ & and &
    $\gamma_p(L) := 2p^a P_p(2)$.
\end{tabular}
\end{center}
    Then,
  \begin{equation*}
    \prod_p \alpha_p^{-1}  =  \frac{\gamma_2(L) \gamma_3(L)}{\alpha_2(L) \alpha_3(L)} \left ( \prod_p \beta_p(L)^{-1} \right ) \left ( \prod_{p \mid 12d} \frac{\beta_p(L)}{\gamma_p(L)} \right )
  \end{equation*}
  and
  \begin{align*}
    \prod_p \beta_p(L)^{-1} & =
    \prod_p (1-p^{-2})^{-1} \prod_p (1 - p^{-4})^{-1} \prod_p (1-p^{-6})^{-1} \prod_p \left( 1 + \left ( \frac{-12d}{p} \right )p^{-3} \right) \\
& = \zeta(2) \zeta(4) \zeta(6) L(3, \left (\frac{-12d}{*} \right ))^{-1}.\\
\intertext{Using the familiar values $\zeta(2) = \pi^2/6$, $\zeta(4) = \pi^4 / 90$ and $\zeta(6) = \pi^6/945$, we have }
    \prod_p \beta_p(L)^{-1} & = \frac{\pi^{12}}{510300} L(3, \left ( \frac{-12d}{*} \right ) )^{-1},
  \end{align*}
  where $L(s, \left( \frac{D}{*} \right) )$ is the Dirichlet series defined by $p \mapsto \left( \frac{D}{p} \right )$. 
  Similarly,
  \begin{align*}
    \prod_{p \mid 12d} \frac{\beta_p(L)}{\gamma_p(L)}
    & = \prod_{p \mid 12d} \frac{P_p(3)}{2 p^a P_p(2)}  \\
    & = 2^{-\nu(12d)} (12d)^{-1} \prod_{p \vert 12d} \left ( 1 - p^{-6} \right ).
  \end{align*}
  As 
  \begin{align*}
    \frac{\gamma_3(L)}{\alpha_3(L)} \frac{\gamma_2(L)}{\alpha_2(L)}
    & = 2^2 \vert 12d \vert^{-1}_3 \vert 12d \vert^{-1}_2 P_3(2) P_2(2) \alpha_3(L)^{-1} \alpha_2(L)^{-1} \\
    & = \frac{200}{81} \vert 12d \vert^{-1}_3 \vert 12d \vert^{-1}_2 \alpha_3(L)^{-1} \alpha_2(L)^{-1}, 
  \end{align*}
  then 
  \begin{align*}
  \prod_p \alpha_p(L)^{-1}
    = & \left ( \frac{\pi^{12}}{510300} \right ) \left ( \frac{200}{81} \right ) L(3, \left( \frac{-12d}{*} \right) )^{-1} 2^{-\nu(12d)} (12d)^{-1}
     \vert 12d \vert^{-1}_2  \vert 12 d \vert^{-1}_3   \\
     & \times \alpha_2(L)^{-1} \alpha_3(L)^{-1} \prod_{p \vert 12 d} \left (1-p^{-6} \right ).
  \end{align*}
     As noted in \cite{HMvolapps}, by the Legendre duplication formula \eqref{LegDupForm} (e.g. \cite[p.73]{Davenport}), 
  \begin{equation}\label{LegDupForm}
    \Gamma(z) \Gamma(z + 1/2) = 2^{1-2z} \pi^{1/2} \Gamma(2z),
  \end{equation}
  we have 
  \begin{equation}\label{PiGammaProd}
    \prod_{k=1}^6 \pi^{-k/2}\Gamma(k/2) = \frac{3 }{4 \pi^9}.
  \end{equation}
  Hence, by Theorem \ref{GHSHMvol}, 
  \begin{align*}
    \opn{Vol}_{HM}(\opn{O}(L))
    = &  \frac{3 (12d)^{7/2} }{2 \pi^9}  \prod_p \alpha_p(L)^{-1} \\
    = & (12d)^{7/2} \left ( \frac{3}{2 \pi^9} \right )
    \left (\frac{\pi^{12}}{510300} \right )
    \left ( \frac{200}{81} \right ) L(3, \left( \frac{-12d}{*} \right) )^{-1} 2^{-\nu(12d)} (12d)^{-1} \\
    &  \times \left( \prod_{p \vert 12 d} (1-p^{-6}) \right )
    \vert 12 d \vert^{-1}_3 \vert 12d \vert^{-1}_2 \alpha_3(L)^{-1} \alpha_2(L)^{-1} \\
    = & \frac{(12d)^{5/2}\pi^{3}}{(137781)  2^{\nu(12d)} \vert 12 d \vert_3 \vert 12d \vert_2} L(3, \left( \frac{-12d}{*} \right) )^{-1} \alpha_3(L)^{-1} \alpha_2(L)^{-1} \prod_{p \vert 12 d} (1-p^{-6}).
  \end{align*}
\end{proof}
\subsection{Hirzebruch-Mumford volumes of $\opn{O}(K^{u,m}_{r,s}(k,l))$}
We now calculate the Hirzebruch-Mumford volumes of $\opn{O}(K^{u,m}_{r,s}(k,l))$ for the lattices $K^{u,m}_{r,s}(k,l)$ in Proposition \ref{KgenusProp}. 
\begin{proposition}\label{KHMvolProp}
  Let $K$ denote the lattice $K^{u,m}_{r,s}(k,l)$ of Proposition \ref{KgenusProp} and suppose $N_0$ denotes the unimodular factor of $K \otimes \ZZ_p$ in \eqref{jordandecomp}.
  Then, 
\[
  \begin{array}{l l}
\opn{Vol}_{HM}(\opn{O}(K^{2s, 2s}_{r,s}(0,l)))  = \frac{r^2}{160} 2^{-\nu(2r)} (\alpha_2(K)^{-1} \vert 2r \vert_2^{-1}) \prod_{p \vert 2r} (1 + p^{-2})
    & \text{if $l$ is odd} \\[7pt]
    \opn{Vol}_{HM}(\opn{O}(K^{2s, s}_{r,l}(0,l)))  \leq \frac{r^2}{6}2^{-\nu(8r)} \prod_{p \vert 8r} (1 + \chi(N_0)p^{-2}) & \text{if $l$ is even} \\[7pt]
    \opn{Vol}_{HM}(\opn{O}(K^{2s,2s}_{r,s}(r,l)))  = 
    \frac{r^2}{160} 2^{-\nu(2r)} (\alpha_2(K)^{-1} \vert 2r \vert_2^{-1}) \prod_{p \vert 2r}(1 + \chi(N_0)p^{-2})
    & \text{if $l$ is odd} \\[7pt]
    \opn{Vol}_{HM}(\opn{O}(K^{2s, 2s}_{r,l}(r,l)))  = \frac{r^2}{3600}2^{-\nu(2r)} \prod_{p \vert 2r} (1 + \chi(N_0)p^{-2}) & \text{if $l$ is even.}
  \end{array}
  \]
\end{proposition}
\begin{proof}
We use $N_i$ to denote unimodular $\ZZ_p$-lattices and, for prime $p$, we define $a$ by $\vert \opn{det} K \vert_p = p^{-a}$.
  We note that in each case $\opn{Sg}(K)$ contains exactly one class, which follows from \cite[Theorem 1.13.2]{Nikulin}.

  \noindent $\mathbf{K^{2s,2s}_{r,s}(0,l)},$ $\mathbf{l}$ \textbf{odd}. 
  Let $K = K^{2s, 2s}_{r,s}(0,l)$ where $l$ is odd.
  If $p=2$ then, by Theorem 1.9.1 and  Corollary 1.9.3 of \cite{Nikulin},
  \begin{equation*}
    K_2 = 2U \op \la \theta_a \ra (2^a), 
  \end{equation*}
  implying $m=3$, $w=a$ and $q=1$.
  \begin{enumerate}[(i)]
  \item If $a=1$ then
  $E_j=1$ for $j<-1$ or $j>3$,
  $E_{-1}=1$,
  $E_0 = 1/2$,
  $E_1 = 1$,
  $E_2 = 1/2$ and
  $E_3 = 1$.
\item  If $a=2$ then
  $E_j=1$ for $j<-1$ or $j>3$, 
  $E_{-1}=1$,
  $E_0=\frac{1}{2}(1+2^{-2})$,
  $E_1=1/2$,
  $E_2=1$ and
  $E_3=1/2$. 
\item If $a=3$ then
  $E_j = 1$ for $j<-1$ or $j>5$,
  $E_{-1}=1$,
  $E_0 = \frac{1}{2}(1 + 2^{-2})$,
  $E_1=1$,
  $E_2=1/2$,
  $E_3=1$,
  $E_4=1/2$ and
  $E_5=1$. 
\item   If $a>3$ then
  $E_j = 1$ for $j <-1$ or $j>a+1$,
  $E_{-1}=1$,
  $E_0 = \frac{1}{2}(1 + 2^{-2})$,
  $E_1 = 1$,
  $E_j = 1$ for $1<j<a-1$,
  $E_{a-1} = 1/2$,
  $E_a = 1$ and
  $E_{a+1} = 1/2$. 
\end{enumerate}
  Therefore,
  \[ 
  E_2(K) =
    \begin{cases}
      4 & \text{if $a=1$} \\
      8(1+2^{-2})^{-1} & \text{if $a>1$} 
    \end{cases}
\]
    and     
  \begin{equation}\label{K2s2sr0l,odd}
    \alpha_2(K) =
    \begin{cases}
      64 P_2(2) & \text{if $a=1$} \\
      2^{6+a} P_2(2) (1+2^{-2})^{-1} & \text{if $a>1$.}
    \end{cases}
  \end{equation}
  If $p \neq 2$ then, by Theorem 1.9.1 and  Corollary 1.9.3 of \cite{Nikulin},
  \begin{equation}\label{Kp}
    K_p =
    \begin{cases}
      \la 1 \ra^{\op 4} \op \la \theta_0 \ra & \text{if $a =0$} \\
      \la 1 \ra^{\op 3} \op \la \theta_0 \ra \op \la \theta_a \ra (p^a) & \text{if $a>0$}.
    \end{cases}
  \end{equation}
  If $a=0$ then $s=1$, $w=0$, $P_p(K) = P_p(2)$ and $E_p(K)=1$.
  If $a>0$ then $s=2$, $w=a$, $P_p(K) = P_p(2)$ and $E_p(K)=(1 + p^{-2})^{-1}$ (noting that $K = 2U \op \la -2r \ra$ by \cite[Corollary 1.13.4]{Nikulin}).
  Therefore,
  \[
    \alpha_p(K) =
    \begin{cases}
      P_p(2) & \text{if $a=0$} \\
      2 p^a P_p(2) (1 + p^{-2})^{-1} & \text{if $a>0$.}
    \end{cases}
  \]
  As
  \begin{align*}
    \prod_p \alpha_p(K)^{-1}
    & =  \left (\frac{ 2 \alpha_2(K)^{-1} \vert 2r \vert_2^{-1}}{1+2^{-2}} \right )
    \left (\prod_{p \vert 2r} 2^{-1} p^{-a} (1+p^{-2}) \right ) \left (\prod_p P_p(2)^{-1} \right )P_2(2) \\
    & = \frac{2 \zeta(2) \zeta(4)}{1+2^{-2}} 2^{-\nu(2r)} (2r)^{-1} (\alpha_2(K)^{-1} \vert 2r \vert_2^{-1}) P_2(2) \prod_{p \vert 2r} (1 + p^{-2}) \\
    & = \frac{\pi^6}{480} 2^{-\nu(2r)} (2r)^{-1} (\alpha_2(K)^{-1} \vert 2r \vert_2^{-1}) \prod_{p \vert 2r} (1 + p^{-2})
  \end{align*}
  and, as in \eqref{PiGammaProd}, 
  \begin{equation}\label{gamma5}
    \prod_{k=1}^5 \pi^{-k/2} \Gamma(k/2) = \frac{3}{8 \pi^{6}},
  \end{equation}
  then
  \begin{align*}
    \opn{Vol}_{HM}(\opn{O}(K))
& = \frac{r^2}{160} 2^{-\nu(2r)} (\alpha_2(K)^{-1} \vert 2r \vert_2^{-1}) \prod_{p \vert 2r} (1 + p^{-2}).
  \end{align*}
    
  \noindent $\mathbf{K^{2s,s}_{r,s}(0,l),}$  \textbf{$l$ even.}
  Let $K=K^{2s,s}_{r,s}(0,l)$ where $l$ is even.
  If $p=2$ then, by Corollary 1.9.3 of \cite{Nikulin},
  \begin{equation*}
    K_2 =
    \begin{cases}
      \begin{array}{llll}
        U \op N_1(2) & \text{or} & V \op N_1(2) & \text{if $a=3$}  \\
        U \op N_1(2) \op N_{a-2}(2^{a-2}) & \text{or} & V \op N_1(2)  \op N_{a-2}(2^{a-2}) & \text{if $a>3$}
      \end{array}
    \end{cases}
  \end{equation*}
  where
  \begin{equation*}
    V =
    \begin{pmatrix}
      2 & 1 \\
      1 & 2
    \end{pmatrix}.
  \end{equation*}
  If $a=3$ then $q \leq 4$, $w = 6$, $P_2(K) \geq P_2(1)^2$ and $E_2(K) \geq 16/9$.
  If $a>3$ then $q \leq 5$, $w=a+3$, $P_2(K) \geq P_2(1)^2$ and $E_2(K) \geq 16/9$, implying $\alpha_2(K) \geq 2^a$.

  If $p \neq 2$ then $K_p$ is as in \eqref{Kp}.
  If $a=0$ then $s=1$, $w=0$, $P_p(K)=P_p(2)$ and $E_p(K)=1$;
  if $a>0$ then $s=2$, $w=a$, $P_p(K)=P_p(2)$ and $E_p(K) = (1 + \chi(N_0)p^{-2})^{-1}$. 
  Hence, 
  \[
  \alpha_p(K) =
  \begin{cases}
    P_p(2) & \text{if $a=0$}\\
    2p^a P_p(2)(1 + \chi(N_0) p^{-2})^{-1} & \text{if $a>0$}
  \end{cases}
  \]
  and  
  \begin{align*}
    \prod_p \alpha_p(K)^{-1} & \leq
    \left ( \prod_p P_p(2)^{-1} \right )
    \left ( \prod_{p \vert 8r} 2^{-1} \vert 8r \vert_p (1 + \chi(N_0) p^{-2}) \right )
    \left ( 2 \vert 8r \vert_2^{-1} P_2(2) (1 + \chi(N_0)2^{-2})^{-1} \right ) \vert 8r \vert_2  \\
& \leq
    (15/8) 2^{-\nu(8r)} (8r)^{-1}
    \left ( \prod_p P_p(2)^{-1} \right ) 
    \left ( \prod_{p \vert 8r} (1  + \chi(N_0)p^{-2} ) \right ).
  \end{align*}
  Therefore, by \eqref{gamma5} and the standard values for $\zeta(2)$ and $\zeta(4)$,
  \begin{align*}
    \opn{Vol}_{HM}(\opn{O}(K))
& \leq
    \frac{r^2}{6}2^{-\nu(8r)} \prod_{p \vert 8r} (1 + \chi(N_0)p^{-2}).
\end{align*}

\noindent $\mathbf{K^{2s,2s}_{r,s}(r,l),}$ \textbf{$l$ odd.} 
If $p = 2$ then by \cite[Corollary 1.9.3]{Nikulin},
\[
K_2 = 2U \op \la \theta_a \ra (2^a),
\]
implying $K_2$ and $\alpha_2(K)$ are as for $K^{2r,2s}_{r,s}(0, l)$ with $l$ odd. 
If $p \neq 2$ then $K_p$ and $\alpha_p(K)$ are as for $K_{r,s}^{2s,2s}(0, l)$ with $l$ even.
Therefore,
\begin{align*}
\prod_p \alpha_p(K)^{-1}
= & 
\left ( \prod_p P_p(2)^{-1} \right)
\left ( \prod_{p \vert 2r} 2^{-1}\vert 2r \vert_p (1+\chi(N_0)p^{-2}) \right ) \\
& \times 2 \vert 2r \vert_2^{-1} \alpha_2(K)^{-1}(1 + 2^{-2})^{-1} P_2(2). 
\end{align*}
Hence, by \eqref{gamma5},
\[
\opn{Vol}_{HM}(\opn{O}(K)) = \frac{r^2}{160} 2^{-\nu(2r)} \left ( \prod_{p \vert 2r} 1+ \chi(N_0)p^{-2} \right ) \alpha_2(K)^{-1} \vert 2r \vert_2^{-1}.
\]
\noindent $\mathbf{K^{2s,2s}_{r,s}(r,l)},$  \textbf{$l$ even.}
  Let $K = K^{2s, 2s}_{r,s}(r,l)$ where $l$ is even.
  If $p=2$ then, by Theorem 1.9.1, Corollary 1.9.3 and Proposition 1.8.1 and 1.82 of \cite{Nikulin}, 
  \begin{equation*}
    K_2 = 2U \op \la \theta_1 \ra (2)
  \end{equation*}
  (where we have used the fact that $r$ is odd).
  Then, as in the case of $K^{2s,2s}_{r,s}(0,l)$ with $l$ odd, $\vert 2r \vert_2=2^{-1}$, $E_2(K)=4$  and $\alpha_2(K) = 64 P_2(2)$.
  
  If $p \neq 2$ then, by Theorem 1.9.1 and  Corollary 1.9.3 of \cite{Nikulin}, $K_p$ is as in \eqref{Kp}.
  If $a=0$ then $s=1$, $w=0$, $P_p(K) = P_p(2)$ and $E_p(K)=1$.
  If $a>0$ then $s=2$, $w=a$, $P_p(K)=P_p(2)$ and $E_p(K)=(1+\chi(N_0)p^{-2})^{-1}$.
  Therefore,
  \begin{equation*}
    \alpha_p(K) =
    \begin{cases}
      P_p(2) & \text{if $a=0$} \\
      2 p^a P_p(2) (1+\chi(N_0)p^{-2})^{-1} & \text{if $a>0$,}
    \end{cases}
  \end{equation*}
  and so,  
  \begin{align*}
    \prod_p \alpha_p(K)^{-1}
    & = 
    \frac{1}{32 \vert 2r \vert_2 (1 + \chi(N_0)2^{-2})}
    \left ( \prod_p P_p(2)^{-1} \right )
    \left ( \prod_{p \vert 2r} 2^{-1} \vert 2r \vert_p
    (1 + \chi(N_0)p^{-2}) \right ) \\
    & =
    \frac{\zeta(2) \zeta(4)}{16 (1+2^{-2}) 2^{\nu(2r)} (2r)} \prod_{p \vert 2r} (1 + \chi(N_0)p^{-2})
  \end{align*}
  (where the denominator $(1 + \chi(N_0)2^{-2})$ is obtained from the decomposition of $K_2$).
  Hence, 
  \begin{align*}
    \opn{Vol}_{HM}(\opn{O}(K))
    & = \left ( \frac{6(2r)^3}{8 \pi^6} \right ) \left ( \frac{\zeta(2) \zeta(4)}{16 (1+2^{-2}) 2^{\nu(2r)} (2r)} \right ) \prod_{p \vert 2r} (1 + \chi(N_0)p^{-2}) \\
    & = \frac{r^2}{3600}2^{-\nu(2r)} \prod_{p \vert 2r} (1 + \chi(N_0)p^{-2}).
  \end{align*}     
\end{proof}
 \section{Bounds for Hirzebruch-Mumford Volumes}\label{hmbounds}
We now produce bounds for the Hirzebruch-Mumford volumes of \S\ref{HMSec}.
We will use these bounds in \S\ref{ObsSec} in connection with obstruction calculations.
In this section, $L$ is the lattice
\[
L=2U \op \la -2d \ra \op \la -6 \ra
\] 
(i.e. \eqref{ldef} with $n=2$) and $\Gamma \subset \opn{O}^+(L)$ is the modular group defined in \S\ref{modgp}.
\subsection{Bounds for $\opn{Vol}_{HM}(\opn{O}(L))$}
\begin{lemma}\label{LfBound}
  If $D \in \NN$, $s>1$ and $\chi$ is an arithmetic function taking values in $\{0, \pm 1\}$ then,   
\[
\begin{array}{ccc}
  \zeta(2s) \zeta(s)^{-1} \leq L(s, \chi) \leq \zeta(s) 
  & \text{and} & 
  \zeta(2s) \zeta(s)^{-1} \leq \prod_{p \vert D} (1 - \chi(p)p^{-s})^{-1} \leq \zeta(s).
\end{array}
\]
\end{lemma}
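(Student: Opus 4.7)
The plan is to reduce both inequalities to pointwise Euler-factor bounds and then take products. Since $\chi(p) \in \{0, \pm 1\}$ and $t \mapsto (1-t)^{-1}$ is monotone increasing on $(-1,1)$, I would first observe that for every prime $p$ and every $s>1$,
\[
(1+p^{-s})^{-1} \leq (1-\chi(p)p^{-s})^{-1} \leq (1-p^{-s})^{-1},
\]
where the middle factor equals $1$ when $\chi(p)=0$. Writing $L(s,\chi)$ as an Euler product and multiplying the upper bound over all primes gives $L(s,\chi) \leq \zeta(s)$ at once. For the lower bound, I would use the factorisation $1-p^{-2s} = (1-p^{-s})(1+p^{-s})$ to compute
\[
\prod_p (1+p^{-s})^{-1} = \prod_p \frac{1-p^{-s}}{1-p^{-2s}} = \zeta(2s)\zeta(s)^{-1},
\]
which yields $L(s,\chi) \geq \zeta(2s)\zeta(s)^{-1}$.

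For the truncated product $\prod_{p\mid D}(1-\chi(p)p^{-s})^{-1}$ I would apply the same pointwise bounds and combine them with the monotonicity of truncation. Each factor $(1-p^{-s})^{-1}$ exceeds $1$, so removing such factors from a product only decreases it; this gives
\[
\prod_{p\mid D}(1-\chi(p)p^{-s})^{-1} \leq \prod_{p\mid D}(1-p^{-s})^{-1} \leq \prod_{p}(1-p^{-s})^{-1} = \zeta(s).
\]
Dually, each factor $(1+p^{-s})^{-1}$ is at most $1$, so truncation increases the corresponding product, and
\[
\prod_{p\mid D}(1-\chi(p)p^{-s})^{-1} \geq \prod_{p\mid D}(1+p^{-s})^{-1} \geq \prod_{p}(1+p^{-s})^{-1} = \zeta(2s)\zeta(s)^{-1}.
\]

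There is no substantive obstacle here; the argument is a monotonicity calculation together with the standard Euler-product identity $\prod_p(1+p^{-s})^{-1} = \zeta(2s)/\zeta(s)$. The only point requiring care is the directional asymmetry when passing to the finite product over $p\mid D$: the upper-bound local factors exceed $1$ while the lower-bound local factors are at most $1$, so truncation acts oppositely on the two estimates, but in each case in precisely the direction the claimed inequality requires.
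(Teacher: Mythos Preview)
Your proof is correct and follows essentially the same approach as the paper: both rely on the pointwise Euler-factor bounds $(1+p^{-s})^{-1} \le (1-\chi(p)p^{-s})^{-1} \le (1-p^{-s})^{-1}$, the identity $\prod_p (1+p^{-s})^{-1} = \zeta(2s)\zeta(s)^{-1}$ (which the paper phrases via the Liouville function), and the observation that the extremal factors lie on the correct side of $1$ for truncation. The only cosmetic difference is that the paper establishes the finite-product inequality first and then lets $D\to\infty$, whereas you do the infinite product first and then truncate.
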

\begin{proof}
  If $\lambda$ is the Liouville function then, 
  \begin{equation*}
    L(s, \lambda)
    =
    \prod_p (1 + p^{-s})^{-1}
    \leq
    \prod_{p \vert D} (1 + p^{-s})^{-1}
    \leq
    \prod_{p \vert D} (1 - \chi(p)p^{-s})^{-1}
    \leq
    \prod_{p \vert D} (1 - p^{-s})^{-1}
    \leq
    \zeta(s).
  \end{equation*}
  As $L(s, \lambda)=\zeta(2s) \zeta(s)^{-1}$ \cite[p.335]{HardyWright} then, 
  \begin{equation*}
    \zeta(2s) \zeta(s)^{-1}
    \leq
    \prod_{p \vert D} (1 - \chi(p)p^{-s})^{-1}
    \leq
    \zeta(s),
  \end{equation*}
  from which the result follows.
\end{proof}
\begin{lemma}\label{Lbound}
  We have
  \begin{equation*}
    \opn{Vol}_{HM}(\opn{O}(L)) \geq \frac{(12d)^{5/2} }{ (248832) \zeta(3) \pi^3 2^{\nu(12d)}}.
  \end{equation*}
\end{lemma}
\begin{proof}
  By Proposition \ref{HMvolOL},
  \[
  \opn{Vol}_{HM}(\opn{O}(L)) = 
    \frac{(12d)^{5/2} \pi^3}{137781}
    L \left (3, \left ( \frac{-12d}{*} \right ) \right )^{-1}
    2^{-\nu(12d)}  \vert 12d \vert^{-1}_3 \vert 12d \vert^{-1}_2 \alpha_3(L)^{-1} \alpha_2(L)^{-1}
    \prod_{p \vert 12d} (1-p^{-6}),   
  \]
  where
  \begin{equation*}
    \alpha_2(L) =
    \begin{cases}
      45 \vert 12 d \vert_2^{-1} 
      & \text{if $a=2$ and $d \not \equiv 3 \bmod{ 4}$ } \\
      90 \vert 12 d \vert_2^{-1} 
      & \text{if $a=2$ and $d \equiv 3 \bmod{ 4}$} \\
      90 \vert 12 d \vert_2^{-1} 
      & \text{if $a=3$ or $a=4$} \\
      180 \vert 12 d \vert_2^{-1} 
      & \text{if $a \geq 5$}
    \end{cases}
  \end{equation*}
  and
  \begin{equation*}
    \alpha_3(L) =
    \begin{cases}
      2 \vert 12 d \vert_3^{-1} P_3(2) & \text{if $a=1$} \\
      6 \vert 12 d \vert_3^{-1} P_3(2) P_3(1) (1+3^{-2})^{-1} ( 1 + \left ( \frac{-4.3^{-1} d}{p} \right ) 3^{-1})^{-1} &
      \text{if $a=2$} \\
      12 \vert 12 d \vert_3^{-1} P_3(2) (1+3^{-2})^{-1} & \text{if $a>2$}.
    \end{cases}
  \end{equation*}
  As $P_2(2) = 45/64$, $P_3(1) = 8/9$ and $P_3(2) = 640/729$, then  
  \begin{equation*}
    \alpha_2(L)^{-1} \geq \frac{\vert 12 d \vert_2}{256 P_2(2)} = \frac{\vert 12 d \vert_2}{180}
  \end{equation*}
  and
  \begin{equation*}
    \alpha_3(L)^{-1} \geq \frac{27 \vert 12 d \vert_3}{256}, 
  \end{equation*}
  implying 
  \begin{equation*}
    \vert 12d \vert_3^{-1} \vert 12d \vert_2^{-1} \alpha_3(L)^{-1} \alpha_2(L)^{-1} \geq \frac{3}{5120}.
  \end{equation*}
  By Lemma \ref{LfBound},
  \begin{equation*}
    \begin{array}{ccc}
      L(3, \left( \frac{-12d}{*} \right ) )^{-1} \geq \zeta(3)^{-1}
      & \text{and}
      & \prod_{p \vert 12d} (1-p^{-6}) \geq \zeta(6)^{-1},
    \end{array}
  \end{equation*}
  from which the result follows.
\end{proof}
\subsection{Bounds for $\opn{Vol}_{HM}(\Gamma)$.}
\begin{lemma}\label{GammaBound}
We have 
\begin{equation*}
      \begin{array}{c c c}
        \opn{Vol}_{HM}(\Gamma) \geq  \dfrac{\gamma(12d)^{5/2} }{ 124416 \zeta(3) \pi^3 }
        & \text{where} &
        \gamma =
        \begin{cases}
          1/4 & \text{if $(6,d)=1$} \\
          1 & \text{if $(6,d)=6$} \\
          1/2 & \text{otherwise.}
        \end{cases}
        \end{array}
    \end{equation*}
\end{lemma}
  \begin{proof}
    By Proposition \ref{modgp}, $\Gamma = \widetilde{\opn{SO}}^+(L) \rtimes \la \sigma_v \ra$, implying 
    \[
    \vert \opn{O}(L) : \widetilde{\opn{SO}}^+(L) \vert  
    = 
    2 \vert \opn{O}(L) : \Gamma \vert
    =
    2 \vert \opn{O}(L) : \widetilde{\opn{O}}^+(L) \vert.
    \]
    By Lemma 3.2 of \cite{HMvolapps},
    $\vert \opn{O}^+(L) : \widetilde{\opn{O}}^+(L) \vert
    =
    \vert \opn{O}(q_L ) \vert
    $
    and as $ -I \not \in \Gamma$ then,
    \begin{align*}
      \vert \la \opn{O}(L), -I \ra : \la \Gamma, -I \ra \vert
      &=
      \frac{1}{2} \vert \opn{O}(L) : \Gamma \vert \\
      &=
      \vert \opn{O}^+(L) : \widetilde{\opn{O}}^+(L) \vert \\
      & = \vert \opn{O}(q_L) \vert.
    \end{align*}
    We now produce a lower bound for $\vert \opn{O}(q_L) \vert$.
    If $(a,b) \in D(L) \cong C_6 \op C_{2d}$ then
    \[
    q_L(a,b) \equiv
     -\frac{a^2}{6} -  \frac{b^2}{2d}  \bmod{ 2\ZZ}.
     \]
     As in Lemma \ref{OqkLemma}, there are $2^{\nu(d)}$ solutions to
     \begin{equation}\label{gammab1}
       x^2 \equiv 1 \bmod{ 4d}
     \end{equation}
      and  2 solutions to
    \begin{equation}\label{gammab2}
      y^2 \equiv 1 \bmod{ 12}, 
    \end{equation}
    where $x \in \ZZ / (2d) \ZZ$ and $y \in \ZZ / 6 \ZZ$.
    As each solution to \eqref{gammab1} and \eqref{gammab2} is a generator for $\ZZ/(2d) \ZZ$ or $\ZZ/6 \ZZ$ then
    $(0,1) \mapsto (0,x)$
    and
    $(1,0) \mapsto (y,0)$
    define elements of $\opn{O}(q_L)$, implying  $\vert \opn{O}(q_L) \vert \geq 2^{\nu(d) + 1}$.
    By Lemma \ref{Lbound}, 
    \begin{equation*}
      \opn{Vol}_{HM}(\opn{O}(L)) \geq
      \left (
      \frac{(12d)^{5/2}}{124416 \zeta(3) \pi^3}
      \right )
      2^{-\nu(12d)} 
    \end{equation*}
    and the result follows from \eqref{subgpHMvol}, with $\gamma$  as in the statement of the Lemma.
\end{proof}
\subsection{Bounds for $\opn{Vol}_{HM}(\opn{O}(K))$}
As in Proposition \ref{KgenusProp}, we will often use $K$ to denote the lattice $K_{r,s}^{u,m}(k,l)$.
\begin{lemma}\label{Kbound}
  For $K^{u,m}_{r,s}(k,l)$ as in Proposition \ref{KgenusProp}, 
\[
  \begin{array}{l l }
    \opn{Vol}_{HM}(\opn{O}(K^{2s, 2s}_{r,s}({0,l})))  \leq \dfrac{r^2 }{240 \pi^2} 2^{-\nu(2r)} & \text{if $l$ is odd} \\
    [15pt]\opn{Vol}_{HM}(\opn{O}(K^{2s, s}_{r,s}({0,l})))  \leq \dfrac{5r^2}{2 \pi^2}2^{-\nu(2r)} & \text{if $l$ is even}\\
    [15pt]
    \opn{Vol}_{HM}(\opn{O}(K^{2s, 2s}_{r,s}(r,l)))  \leq \dfrac{r^2 }{240 \pi^2} 2^{-\nu(2r)}  & \text{if $l$ is odd} \\
    [15pt]
    \opn{Vol}_{HM}(\opn{O}(K^{2s, 2s}_{r,s}(r,l)))  \leq \dfrac{r^2 }{240 \pi^2} 2^{-\nu(2r)} & \text{if $l$ is even.}
  \end{array}
  \]
\end{lemma}
\begin{proof}
  \begin{enumerate}[(i)]
    \item If $l$ is odd and $K = K^{2s, 2s}_{r,s}(0,l)$ or $K^{2s,2s}_{r,s}(r,l)$ then, by Proposition \ref{KHMvolProp},
  \begin{equation*}
    \opn{Vol}_{HM}(\opn{O}(K))
    =
    \frac{r^2}{160} 2^{-\nu(2r)}
    (\alpha_2(K)^{-1} \vert 2r \vert^{-1}_2)
    \prod_{p \vert 2r} ( 1+ \chi_p p^{-2}), 
  \end{equation*}
  where $\chi_p = \pm 1$.
  By \eqref{K2s2sr0l,odd}, 
$\alpha_2(K)^{-1} \leq \frac{2}{45} \vert 2r \vert_2$ and 
  by Lemma \ref{LfBound},
  \[
  \prod_{p \vert 2r} (1+ \chi_p p^{-2}) \leq \zeta(2) \zeta(4)^{-1} = 15 \pi^{-2}, 
  \] 
  implying 
  \[
    \opn{Vol}_{HM}(\opn{O}(K))
\leq \frac{r^2 }{240 \pi^2} 2^{-\nu(2r)}.
\]
\item  If $l$ is odd and $K=K^{2s,s}_{r,s}(0,l)$ then, by Proposition \ref{KHMvolProp} and Lemma \ref{LfBound},
  \begin{equation*}
    \opn{Vol}_{HM}(\opn{O}(K)) \leq
    \frac{r^2}{6} 2^{-\nu(8r)}\prod_{p \vert 8r} (1 + \chi(N_0)p^{-2}) \leq 
    \frac{5r^2}{2 \pi^2}2^{-\nu(2r)}.
  \end{equation*}
\item Similarly, if $l$ is even and $K = K^{2s, 2s}_{r,s}(r,l)$ then, by Proposition \ref{KHMvolProp} and Lemma \ref{LfBound}, 
  \[
    \opn{Vol}_{HM}(\opn{O}(K))
    =
    \frac{(2r)^2}{14400} 2^{-\nu(2r)}
    \prod_{p \vert 2r} (1 + \chi(N_0) p^{-2})
\leq
\frac{r^2}{240 \pi^2} 2^{-\nu(2r)}.
  \]
  \end{enumerate}
\end{proof}
\subsection{Hirzebruch-Mumford volumes and elliptic obstructions}
We now produce bounds for Hirzebruch-Mumford volumes of groups related to elliptic obstructions.
\begin{lemma}\label{II-III-HMVol}
  Let $K^{II} = U \op U(3) \op \la -2r \ra$ and $K^{III}=U \op P \op \la -2r \ra$ where $P$ is an indefinite binary quadratic form satisfying $D(P)=C_9$.
  If $K=K^{II}$ or $K^{III}$ then 
  \[
  \opn{Vol}_{HM}(\opn{O}(K))
  \leq \frac{3 r^2}{5 \pi^2}   2^{-\nu(18r)}.
  \]
\end{lemma}
\begin{proof}
We use $\theta_0, \theta_a, \theta_a', \theta_a'', \theta_a'''$ to denote elements of $\ZZ_p^*$ and $D(L)_p$ to denote the $p$-component of the abelian group $D(L)$.
If $K=K^{II}$ or $K^{III}$, $p=2$ and $x \in D(K)_2 \cong \ZZ/ (2^a) \ZZ$, then
  \begin{equation*}
    q_K(x)_2 =
    \left ( \frac{\theta_2 x^2}{2^a} \right ) \bmod{ 2\ZZ_2}.
  \end{equation*}
  Therefore, by Proposition 1.8.1 of \cite{Nikulin},
  \begin{equation*}
    \begin{array}{ccc}
      K_2 \cong U \op V \op \la \theta_a \ra (2^a) & \text{or} & K_2 \cong U \op U \op \la \theta_a \ra(2^a),
    \end{array}
  \end{equation*}
  where
  \begin{equation*}
    V =
    \begin{pmatrix}
      2 & 1 \\
      1 & 2
    \end{pmatrix}.
  \end{equation*}
 Similarly, if $p=3$ then
  \begin{equation*}
    K_3^{II} =
    \begin{cases}
      \la 1 \ra^{\op 2} \op \la \theta_0 \ra \op (\la \theta'_3 \ra \op \la \theta_3 '' \ra)(3) & \text{if $a=2$}\\
      \la 1 \ra \op \la \theta_0 \ra \op ( \la \theta_3' \ra \op \la \theta_3'' \ra \op \la \theta_3 ''' \ra)(3)  & \text{if $a=3$} \\
      \la 1 \ra \op \la \theta_0 \ra \op (\la \theta_3' \ra \op \la \theta_3'' \ra)(3) \op \la \theta_{3^{a-2}} \ra (3^{a-2}) & \text{if $a>3$}
    \end{cases}
  \end{equation*}
  and 
  \begin{equation*}
    K_3^{III} =
    \begin{cases}
      \la 1 \ra^{\op 3} \op \la \theta_0 \ra \op \la \theta_a \ra (9) & \text{if $a=2$} \\
      \la 1 \ra^{\op 2} \op \la \theta_0 \ra \op \la \theta_3 \ra (3) \op \la \theta_9 \ra (9) & \text{if $a=3$} \\
      \la 1 \ra^{\op 2} \op \la \theta_0 \ra \op (\la \theta_9' \ra \op \la \theta_9'' \ra)(9) & \text{if $a=4$} \\
      \la 1 \ra^{\op 2} \op \la \theta_0 \ra \op (\la \theta_9 \ra)(9) \op (\la \theta_{3^{a-2}} \ra )(3^{a-2} ) & \text{if $a>4$}. 
    \end{cases}
  \end{equation*}
  If $K=K^{II}$ or $K^{III}$ and $p>3$ then (by direct calculation or \cite[Proposition 1.8.1]{Nikulin}),
  \begin{equation*}
    q_K(x)_p = \left (
    \frac{\theta_a x^2}{p^a} \right ) \bmod{ 2\ZZ_p}, 
  \end{equation*}
  where $x \in D(K)_p \cong \ZZ/(p^a) \ZZ$, implying 
  \begin{equation*}
    K_p =
    \begin{cases}
      \la 1 \ra^{\op 4} \op \la \theta_0 \ra & \text{if $a=0$} \\
      \la 1 \ra^{\op 3} \op \la \theta_0 \ra \op \la \theta_a \ra (p^a) & \text{if $a>0$.}
    \end{cases}
  \end{equation*}
  
  We now calculate the terms $\alpha_p(K)$. 
Suppose $K=K^{II}$ or $K^{III}$ and let $p=2$.
  Then $q=1$, $w=a$ and $P_2(K) = P_2(2)$.
  \begin{enumerate}
  \item  If $a=1$ then
    $E_{-1} = 1$,
    $E_0 = 1/2$,
    $E_1 = 1$,
    $E_2 = 1/2$
    and $E_2(K) \geq 4$. 
  \item If $a=2$ then
    $E_{-1} = 1$,
    $E_0 \leq 5/8$,
    $E_1 = 1/2$,
    $E_2 = 1$,
    $E_3 = 1/2$
    and 
    $E_2(K) \geq 32/5$.
  \item If $a=3$ then
    $E_{-1} = 1$,
    $E_0 \leq 5/8$,
    $E_1 = 1$,
    $E_2=1/2$,
    $E_3=1$,
    $E_4 = 1/2$
    and $E_2(K) \geq 32/5$.
  \item If $a \geq 4$ then
    $E_{-1} = 1$,
    $E_0 \leq 5/8$,
    $E_1=1$,
    $E_{a-1}=1/2$,
    $E_a=1$,
    $E_{a+1}=1/2$
    and 
    $E_2(K) \geq 32/5$.
  \end{enumerate}
  Therefore, $E_2(K) \geq 4$ and
  \[
  \alpha_2(K) \geq 2^{5+a} P_2(2).
  \]

  Suppose $K=K^{II}$. \begin{enumerate}
  \item If $a=2$ then $s=2$, $w=3$, $P_3(K) = P_3(1)^2$ and $E_3(K)=3/4$.
  \item If $a=3$ then $s=2$, $w=6$, $P_3(K)=P_3(1)^2$ and $E_3(K) = 3/4$.
  \item If $a>3$ then $s=3$, $w=a+3$, $P_3(K) = P_3(1)^2$ and $E_3(K) = 9/16$.
  \end{enumerate}
  Therefore, $s \geq 2$, $w \geq a+1$, $P_3(K) \geq P_3(1)^2$ and $E_3(K) \geq 9/16$.

  Suppose $K=K^{III}$. \begin{enumerate}
  \item If $a=2$ then $s=2$, $w=2$, $P_3(K) = P_3(2)$ and $E_3(K) \geq 9/10$.
  \item If $a=3$ then $s=3$, $w=4$, $P_3(K) =P_3(1)$ and $E_3(K) = 1$.
  \item If $a=4$ then $s=2$, $w=6$, $P_3(K) = P_3(1)^2$ and $E_3(K) \geq 3/4$.
  \item If $a>4$ then $s=3$, $w = a+2$, $P_3(K) = P_3(1)$ and $E_3(K) =1$.
  \end{enumerate}
  Therefore,
  $s \geq 2$, $w \geq a$, $P_3(K) \geq P_3(1)^2$ and $E_3(K) \geq 3/4$.
  Hence, for $K = K^{II}$ or $K^{III}$,  \[
  \alpha_3(K)
\geq 2^{-3} 3^{a+2} P_3(1)^2.
     \]
Similarly, if $p>3$, then
  \begin{equation*}
    \alpha_p(K) \geq
    \begin{cases}
      P_p(2) & \text{if $a=0$} \\
      2p^a P_p(2) (1 + p^{-2})^{-1} & \text{if $a>0$}
    \end{cases}
  \end{equation*}
  (as in the case of $K_{r,s}^{2s, 2s}(0, l)$ with $l$ odd in Theorem \ref{KHMvolProp}).
  
  Therefore,
  \begin{align*}
    \prod_p \alpha_p(K)^{-1}
    = & 
    \left ( \prod_{\substack{p \\ p \neq 2,3 }} \alpha_p(K)^{-1} \right )
    \alpha_2(K)^{-1} \alpha_3(K)^{-1} \\
    \leq &
    \left (  \prod_p P_p(2)^{-1} \right ) 
    \left ( \prod_{p \mid 18r } 2^{-1} \vert 18r \vert_p (1 + p^{-2} ) \right )
    \\
    & \times  \left ( \frac{16 P_3(2)}{9 P_3(1)^2 (1+3^{-2})} \right) \left( \frac{1}{2^4 (1+2^{-2})}\right ) \\
\leq & \frac{4}{45}
    \left ( \prod_p P_p(2)^{-1} \right )
    \left ( \prod_{p \mid 18r } 2^{-1} \vert 18r \vert_p (1+p^{-2}) \right ).
  \end{align*}
  From which we obtain,
  \begin{align*}
    \opn{Vol}_{HM}(\opn{O}(K))
    & \leq
    \left ( \frac{108 r^2}{5 \pi^6} \right )
    \zeta(2) \zeta(4) 2^{-\nu(18r)} \prod_{p \vert 18r} (1+p^{-2}), \\
    \intertext{and by Lemma \ref{LfBound},}
    \opn{Vol}_{HM}(\opn{O}(K))
    & \leq \left ( \frac{3 r^2}{5 \pi^2} \right ) 2^{-\nu(18r)}. 
  \end{align*}
\end{proof}
 \section{Obstruction calculations}\label{ObsSec}
In this section we study the obstruction spaces $\opn{RefObs}_{(4-a)k}(\Gamma)$ and $\opn{EllObs}_{(4-a)k}(\Gamma, \ec)$, defined in \eqref{RefObsdef} and \eqref{EllObsdef}.
We begin by characterising the spaces in Proposition \ref{refobsprop} and \ref{ellobsprop};
we exhibit a collection of vectors $\ec$ satisfying the conditions of Proposition \ref{intsingextprop} and \ref{bcextprop} in Lemma \ref{eclem}; 
we study the growth of $\opn{RefObs}_{(4-a)k}(\Gamma)$ and $\opn{EllObs}_{(4-a)k}(\Gamma, \ec)$ in Propositions \ref{alphabound}, \ref{betaIbound}, \ref{betaIIbound} and \ref{betaIIIbound}; and examine their codimension in Proposition \ref{mfbounds2}, following \cite{HMvol}. Throughout, we assume that $L=2U \op \la -6 \ra \op \la -2d \ra$ and use $x \Vert y$ to denote $x \vert y$ and $(x, y/x)=1$.
\subsection{Reflective Obstructions}\label{RefObsSec}
\begin{proposition}[{\cite[Proposition 4.1]{HMvol}}]\label{refobsprop}
  If $0<a<4$ is integral and $k>0$ is even, then
\[
  \opn{RefObs}_{k(4-a)}(\Gamma) \subset \bigoplus_{z \in \rc / \Gamma} \bigoplus_{i=0}^{k/2-1} M_{k(4-a) + 2i}(\Gamma \cap \opn{O}^+(z^{\perp})),
\]
where $\rc/ \Gamma$ is a set of representatives for the $\Gamma$-orbits of
  \[
  \rc := \{ z \in L \mid \text{$z$ is primitive and } \pm \sigma_z \in \Gamma \}.
  \]
\end{proposition}
  \begin{definition}
By Hirzebruch-Mumford proportionality, we define $\alpha(a)$ by
\begin{equation}\label{alphadef}
  \opn{dim} \bigoplus_{z \in \rc / \Gamma} \bigoplus_{i=0}^{k/2-1} M_{k(4-a) + 2i}(\Gamma \cap \opn{O}^+(z^{\perp})) = \alpha(a) k^4 + O(k^3).
\end{equation}
\end{definition}
  We bound $\alpha(a)$ in Proposition \ref{alphabound}, starting with bounds for $\rc/\Gamma$ in Lemma \ref{JjLemma}. 
\begin{lemma}\label{JjLemma}
  If $L=2U \op \la -2r \ra \op \la -2s \ra$, let $J(K)$ denote the number of $[z] \in \rc/\Gamma$ such that $z^{\perp} \cong K$.
\begin{enumerate}
  \item If $l^2 \equiv 1 \bmod{ s}$, $l$ is odd and $K=K^{2s, 2s}_{r,s}(0, l)$, then
    $J(K) \leq 2^{\nu(s)+1}$ if $s$ is odd;
    $J(K) \leq 2^{\nu(s)}$ if $2 \Vert s$;
    $J(K) \leq 2^{\nu(s)+1}$ if $4 \Vert s$; and
    $J(K) \leq 2^{\nu(s) + 2}$  if $8 \vert s$.  
  \item If $2 l_1^2 \equiv 2 \bmod{ 2s}$, $l=2l_1$ is even and $K= K^{2s,s}_{r,s}(0,l)$, then
    $J(K) \leq 2^{\nu(s)}$ if $s$ is odd;
    $J(K) \leq 2^{\nu(s)-1}$ if $2 \Vert s$;
    $J(K) \leq 2^{\nu(s)}$ if $4 \Vert s$; and 
    $J(K) \leq 2^{\nu(s) + 1}$ if $8 \vert s$.
  \item If $l^2 \equiv 1 \bmod{ s}$, $l$ is even and $K= K^{2s, 2s}_{r,s}(r,l)$ then $J(K) \leq 2^{\nu(s)+1}$.
  \item If $l^2 \equiv 2 \bmod{ 2s}$, $l$ is odd and $K=K^{2s,2s}_{r,s}(r,l)$, then
    $J(K) \leq 2^{\nu(s)+1}$ if $s$ is odd;
    $J(K) \leq 2^{\nu(s)}$ if $2 \Vert s$;
    $J(K) \leq 2^{\nu(s)+1}$ if $4 \Vert s$; and
    $J(K) \leq 2^{\nu(s) + 2}$  if $8 \vert s$.  
  \end{enumerate}
\end{lemma}
\begin{proof}
  By Proposition \ref{KgenusProp}, if $z \in \rc/\Gamma$ with $u:=-(z,z)$, $m:=\opn{div}(z)$ and $z^*=z/\opn{div}(z) = (k,l) \in D(L)$ then $z^{\perp} \cong K^{u,m}_{r,s}(k,l)$.
 As $\widetilde{\opn{SO}}^+(L) \subset \Gamma$ then,  by the Eichler criterion, it suffices to bound the number of $l \in \ZZ/(2s) \ZZ$ satisfying the conditions of Lemma \ref{conglemma}/Corollary \ref{qrinvs}.
 We count solutions as in \cite[Lemma 3.3]{HMvolapps}.
  \begin{enumerate}[(i)]
  \item \label{Bj1} Suppose $l^2 \equiv 1 \bmod{ s}$ where $l$ is odd and $z^{\perp} \cong K^{2s,2s}_{r,s}(0,l)$.
    We count solutions to $(l-1)(l+1) \equiv 0 \bmod{ p_i^{a_i}}$ where $\{p_i^{a_i} \}$ are the distinct prime power divisors of $s$.
    As $m=\opn{div}(z) =2s$ is the order of $(0,l) \in D(L)$ then $(l,2s)=1$ and so, if $p_i$ is odd, then $p_i$ cannot divide both $l-1$ and $l+1$, otherwise $p_i \vert l$ and $(l,s) \neq 1$.
    Therefore, $l-1 \equiv 0 \bmod{ p_i^{a_i}}$ or $l+1 \equiv 0 \bmod{ p_i^{a_i}}$.
    If $2^{a} \Vert s$ then $l^2 - 1 \equiv 0 \bmod{ 2^a}$ has exactly one solution if $a=0$ or $1$ and two solutions if $a=2$.
    If $a \geq 3$ then only one of $4 \vert l-1$ or $4 \vert l+1$ can occur (otherwise  $2 \vert (l,s)$) and so $l \equiv \pm 1 \mod{ 2^a}$ or $l \equiv \pm 1 + 2^{a-1} \bmod{ 2^a}$.
    Therefore, by the Chinese remainder theorem, $l^2 - 1 \equiv 0 \bmod{ s}$ has
    $2^{\nu(s)}$ solutions if $s$ is odd;
    $2^{\nu(s)-1}$ solutions if $2 \Vert s$;
    $2^{\nu(s)}$ solutions if $4 \Vert s$;
    and
    $2^{\nu(s)+1}$ solutions if $8 \vert s$.
    If $(l,s)=1$ and $s$ is even then $(l,2s)=(l+s, 2s)=1$.
    If $(l,s)=1$ and $s$ is odd, then precisely one of $(l,2s)=1$ and $(l+s, 2s)=2$ is satisfied.
    We therefore obtain a bound for the solutions to $l^2 \equiv 1 \bmod{ s}$ with $l \in \ZZ/(2s)\ZZ$ and $(l, 2s)=1$.
\item Suppose $l=2l_1$ where $2 l_1^2 \equiv 2 \bmod{ 2s}$ and $z^{\perp} \cong K^{2s,s}_{r,s}(0,l)$.
    As $l$ is uniquely determined by $l_1 \bmod{ s}$, we count solutions to $l_1^2 \equiv 1 \bmod{ s}$ as in \eqref{Bj1}.
    There are
    $2^{\nu(s)}$ solutions if $s$ is odd;
    $2^{\nu(s)-1}$ solutions if $2 \Vert s$;
    $2^{\nu(s)}$ solutions if $4 \Vert s$;
    and
    $2^{\nu(s) + 1}$ solutions if $8 \vert s$.
  \item Suppose $l^2 \equiv 1 \bmod{ s}$ with $l$ even and $z^{\perp} \cong K^{2s,2s}_{r,s}(r, l)$.
    As $l$ is even then $s$ is odd.
    As in \eqref{Bj1}, there are  $2^{\nu(s)}$ solutions to $l^2 \equiv 1 \bmod{ s}$ and  $2^{\nu(s)+1}$ solutions if $l$ is taken in $\ZZ / (2s) \ZZ$.
  \item As in \eqref{Bj1}.
  \end{enumerate}
\end{proof}
To use the Hirzebruch-Mumford bounds of \S\ref{hmbounds}, we will also need bounds for $\vert \opn{O}(q_K) \vert$ where $K \cong z^{\perp}$ for $z \in \rc$.
\begin{lemma}\label{OqkLemma}
  If $K$ is a lattice and $D(K) \cong C_{2r}$ then
  $\vert \opn{O}(q_K) \vert \leq 2^{\nu(r)}$. 
\end{lemma}
\begin{proof}
  Suppose $x \in \ZZ/(2r) \ZZ$ is a generator for $D(K)$. 
  By Proposition 1.8.1 of \cite{Nikulin}, the discriminant form 
  \begin{equation*}
    q_K(x)=    \frac{x^2 \theta}{2r} \bmod{2 \ZZ}
  \end{equation*}
  where $\theta \in \ZZ$ and $(\theta, 2r)=1$. 
As $(\theta, 2r)=1$, the condition $q_K(y) = q_K(x)$ is equivalent to
  \begin{equation}\label{qkcong}
    x^2 \equiv y^2  \bmod{4r}.
  \end{equation}
Suppose $4r = p_1^{a_1} \ldots p_n^{a_n}$ where $p_i$ are distinct primes, $p_1 = 2$ and $a_i \neq 0$.
  By the Chinese remainder theorem, \eqref{qkcong} is satisfied if and only if
\[ 
  (x-y)(x+y) \equiv 0 \bmod{p_i^{a_i}} 
  \] 
$\forall p_i^{a_i} \Vert 4r$.
  If $p>2$ and $p \mid (x-y)$ and $p \mid (x+y)$ then $p \vert 2x$, contradicting $(x, 2r) = 1$.
  Therefore, precisely one of $x-y \equiv 0 \bmod{ p_i^{a_i}}$ or $x+y \equiv 0 \bmod{p_i^{a_i}}$ is true for each $i$.
  Let $p=2$.
  If $a_1=2$ then $x \equiv 1 \mod{2}$;
  if $a_1=3$, then (as $x$ is odd) $2 \Vert x-y$ and $4 \Vert x+y$; or $4 \Vert x-y$ and $2 \Vert x+y$,
  implying $x \equiv \pm y \bmod{ 4}$;
  if $a>3$, then $4$ cannot divide both $x-y$ and $x+y$ otherwise $4 \vert 2x$, implying  $x$ is even.
  Therefore, $2^{a_1 - 1} \Vert x+y$ or $2^{a_1 - 1} \Vert x-y$ and so $x \equiv \pm y \bmod{2^{a_1 - 1}}$. 
  As $D(K)$ is cyclic, then any element of $\opn{O}(q_k)$ is defined by its image on a generator of $D(K)$.
  Therefore, by counting solutions to \eqref{qkcong} for $x \in \ZZ / (2r) \ZZ$, we conclude that $\vert \opn{O}(q_K) \vert \leq 2^{\nu(r)}$.
\end{proof}
\begin{lemma}\label{c2c2c2crLemma}
  If $K$ is a lattice such that $D(K) \cong C_2^{\op 3} \op C_r$ where $r$ is odd then $\vert \opn{O}(q_K) \vert \leq 2^{\nu(r) + 3}$.
\end{lemma}
\begin{proof}
  Let $q_K^2$ and $q_K^r$ be the restriction of the discriminant form $q_K$ to $C_2^{\op 3} \subset D(K)$ and $C_r \subset D(K)$, respectively.
  As $r$ is odd then, as explained in \cite[\S1]{Nikulin}, 
$q_K = q_K^2 \op q_K^r$ 
  and so  $\opn{O}(q_K) \cong \opn{O}(q_K^2) \times \opn{O}(q_K^r)$.
  As $\opn{O}(q_K^2) \subset \opn{Aut}(C_2^{\op 3})$ and $\vert \opn{Aut}(C_2^{\op 3}) \vert = 8$ then $\vert \opn{O}(q_K) \vert \leq 8 \vert \opn{O}(q_K^r) \vert$.
  By \cite[Proposition 1.8.1]{Nikulin}, if $x \in \ZZ/r \ZZ$ then
  \begin{equation}\label{qkr}
    q_K^r(x) = \frac{x^2 \theta}{r} \bmod{ 2 \ZZ}
  \end{equation}
  where $(\theta, r)=1$.
  To bound $\vert \opn{O}(q_K^r) \vert$, we count solutions to $q_K^r(x)=q_K^r(y)$ satisfying $(x,r)=(y,r)=1$ for fixed $y$.
  As $(\theta, r)=1$ then \eqref{qkr} is equivalent to  $(x-y)(x+y) \equiv 0 \bmod{ 2r}$ or to 
  \begin{equation}\label{qkrpi}
    (x-y)(x+y) \equiv 0 \bmod{ p_i^{a_i}}
  \end{equation}
  for all primes $p_i$ satisfying $p_i^{a_i} \Vert 2r$.
  If the prime $p$ satisfies $p \vert 2r$, $p \vert x-y$ and $p \vert x+y$ then $p \vert 2x$, which is a contradiction unless $p=2$, implying \eqref{qkrpi} has exactly 2 solutions for each $p_i \neq 2$.
  If $p_i=2$ and $p_i^{a_i} \Vert 2r$ then $a_i=1$ and \eqref{qkrpi} has exactly one solution.
  Therefore, \eqref{qkr} has not more than $2^{\nu(r)}$ solutions, from which the result follows.
\end{proof}
We will need the following Lemma in \S\ref{ellobssec}.
\begin{lemma}\label{Oqk2-3-lem}
  If $K = U \op U(3) \op \la -2r \ra$ then $\vert \opn{O}(q_K) \vert \leq (624) 2^{\nu(2r)}$.
  If $K = U \op P \op \la -2r \ra$, where $P$ is an indefinite binary quadratic form satisfying $D(P)=C_9$, then $\vert \opn{O}(q_K) \vert \leq (72) 2^{\nu(2r)}$.
\end{lemma}
\begin{proof}
  If $K = U \op U(3) \op \la -2r \ra$ then $D(K) \cong C_3 \op C_3 \op C_{2r}$.
  The discriminant form $q_K$ is given by 
  \begin{equation}\label{KIIform}
  q_K(a,b,c) = \frac{2ab}{3} - \frac{c^2}{2r} \bmod{ 2 \ZZ}
  \end{equation}
  for $(a,b,c) \in D(K)$ and we define $x,y,z \in D(K)$ by $x=(1,0,0)$, $y=(0,1,0)$ and $z=(0,0,1)$.
If $g \in \opn{O}(q_K)$ then $gx$ and $gy$ belong to the  unique subgroups $C_3^{\op 2} \subset D(K)$  if $(2r,3)=1$ and  $C_3^{\op 3} \subset D(K)$ if $(2r, 3) \neq 1$.
  In each case, there are no more than $\vert \opn{Aut}(C_3^{\op 2}) \vert = 48$ or $\vert \opn{Aut}(C_3^{\op 3}) \vert = 624$ possible images for $(x,y)$.
  By considering \eqref{KIIform} as in Lemma \ref{OqkLemma}, there are at most $2^{\nu(2r)}$ images for $gz \in \la gx, gy \ra^{\perp} \cong C_{2r}$, implying $\vert \opn{O}(q_K) \vert \leq (624)2^{\nu(2r)}$.
Similarly, if $K = U \op P \op \la -2r \ra$, then  $D(K) \cong C_9 \op C_{2r}$ and 
  \begin{equation}\label{KIIIform}
  q_K(a,b) = \frac{\theta a^2}{9} - \frac{b^2}{2r} \bmod{ 2\ZZ}
  \end{equation}
  for $(a,b) \in D(K)$ and $\theta \in \ZZ$. 
  We define $x',y' \in D(K)$ by $x'=(1,0)$ and $y'=(0,1)$. 
  There are at most $6$, $18$ or $72$ elements of order 9 in $D(K)$ if $(2r,9)=1$, $3$ or $9$, respectively.
  By \eqref{KIIIform}, there are at most $2^{\nu(2r)}$ images for $y'$ in $(gx')^{\perp}$ and the result follows.
\end{proof}
\begin{proposition}\label{alphabound}
  Suppose that $2^{\nu(x)} \leq C_{\epsilon} x^{\epsilon}$ for some $\epsilon, C_{\epsilon}>0$.
  If $d$ is odd then 
  \[
  \alpha(a)
  \leq
  ( \alpha_1 d^2 +  \alpha_2 d^{\epsilon} ) C(a), 
  \]
  where
  $\alpha_1 = (108199)(675 \pi^2)^{-1}$,
  $\alpha_2 = (216263)(300 \pi^2)^{-1}C_{\epsilon}$
  and
  \begin{equation}\label{C(a)}
C(a) = \frac{1}{8}(4(4-a)^3 + 6(4-a)^2 + 4(4-a) + 1).
  \end{equation}
  \end{proposition}
\begin{proof}
Let
\[
I(K) := \opn{Max} \{ \vert \la \opn{O}(z^{\perp}), -I \ra : \la \opn{O}(z^{\perp}) \cap \Gamma , -I \ra \vert \mid z \in \rc \} 
\]
and 
\[
J(K) := \vert \{ z \in \rc / \Gamma \mid z^{\perp} \cong K \} \vert,  
\]
where $K=z^{\perp} \subset L$ for $z \in \rc$. 
As $C(a)$ in \eqref{C(a)}
is the $k^4$-coefficient of the polynomial
\[
\sum_{j=0}^{k/2 - 1} ((4-a)k + 2j)^3
\]
then $E(K):=I(K) J(K) \opn{Vol}_{HM}(\opn{O}(K))$ is an upper bound for the contribution from $[z] \in \rc/\Gamma$ with $z^{\perp} \cong K$ to $\alpha(a)/C(a)$ in \eqref{alphadef}.
We produced bounds for $J(K)$ in Lemma \ref{JjLemma}.
To bound $I(K)$ we will show that
\[
\vert \la \opn{O}(K), -I \ra : \la \opn{O}(K) \cap \Gamma, -I \ra \vert \leq \opn{O}(q_K)
\]
and apply Lemma \ref{OqkLemma} and \ref{c2c2c2crLemma}.

By Proposition \ref{modgp}, $\widetilde{\opn{SO}}^+(K) \subset \opn{O}(K) \cap \Gamma$, and so, 
\[
  \vert \la \opn{O}(K), -I \ra : \la \opn{O}(K) \cap \Gamma, -I \ra \vert
  \leq
  \vert \opn{O}(K) : \opn{O}(K) \cap \Gamma \vert
  \leq
  \vert \opn{O}(K) : \widetilde{\opn{SO}}^+(K) \vert.
\]
As
$\vert \opn{O}(L) : \opn{O}^+(L) \vert \leq 2$,
$\vert \widetilde{\opn{O}}^+(L) : \widetilde{\opn{SO}}^+(L) \vert \leq 2$
and
$\vert \opn{O}^+(L) : \widetilde{\opn{O}}^+(L) \vert \leq \vert \opn{O}(q_L) \vert$ 
then
\begin{equation}\label{OKStOKbound}
\vert \opn{O}(K) : \widetilde{\opn{SO}}^+(K) \vert
  \leq
  4 \vert \opn{O}(q_K) \vert.
\end{equation}

Therefore, by Lemma \ref{Kbound}, 
\begin{align*}
E(K_{d,3}^{6,3}(0,l))  & \leq \dfrac{160 d^2}{\pi^2}, 
  & & \text{   } & 
E(K_{d,3}^{6,6}(0,l))  &\leq \dfrac{64 d^2}{675 \pi^2},  \\[20pt]
E(K_{3,d}^{2d,d}(0,l)) &\leq \dfrac{(720)(2^{\nu(d)})}{\pi^2},   
& & \text{   } & 
    E(K_{3,d}^{2d,2d}(0,l)) &\leq \dfrac{(32)(2^{\nu(d)})}{75 \pi^2}. \\
\intertext{If $l$ is odd,}
E(K_{d,3}^{6,6}(0,l)) &\leq \dfrac{2 d^2}{15 \pi^2}, 
&& \text{   } &
E(K_{3,d}^{2d,2d}(3,l)) &\leq \dfrac{(3)(2^{\nu(d)})}{10 \pi^2}, \\
\intertext{and if $l$ is even,}
E(K_{d,3}^{6,6}(0,l)) &\leq \dfrac{d^2}{15 \pi^2}, 
&& \text{   } &
    E(K_{3,d}^{2d,2d}(3,l)) &\leq \dfrac{(3)(2^{\nu(d)})}{20 \pi^2}.
\end{align*}
    Hence, 
\[
\alpha(a) \leq
\left (
\alpha_1 d^2 +
\alpha_2 d^{\epsilon}
\right )
C(a)
\]
where
  $\alpha_1 = (108199)(675 \pi^2)^{-1}$ and
  $\alpha_2 = (216263)(300 \pi^2)^{-1}C_{\epsilon}$.
\end{proof}
To produce the general type results of \S\ref{GTsec} we will need an explicit bound for $2^{\nu(x)}$.
\begin{lemma}\label{2nulemma}
  If $\epsilon>0$ then $2^{\nu(x)} = O(x^{\epsilon})$.
  In particular, $2^{\nu(x)} \leq (75/2)x^{1/6}$ for all $x \in \NN$.
\end{lemma}
\begin{proof}
  Let $m \in \NN$ be such that $1/m<\epsilon$.
  We proceed by induction on the number of primes $n$ in the prime-power decomposition of $x \in \NN$.
  If $p_i$ denotes the $i$-th prime, let $x=p_{i_1}^{a_1} \ldots p_{i_n}^{a_n}$ where $p_{i_j}$ are distinct and $a_i \in \ZZ_{>0}$.
  The condition 
$2^n \leq c x^{1/m}$ 
is equivalent to
  \begin{equation}\label{log2nucond}
    n \leq \frac{\opn{log}(c)}{\opn{log}(2)} + \frac{1}{m \opn{log}(2)} \sum_{j=1}^n a_j \opn{log}(p_{i_j})
  \end{equation}
  and if $N$ is minimal such that $\opn{log}(p_{N+1}) \geq m \opn{log}(2)$ and $c$ is chosen such that \eqref{log2nucond} is satisfied for all $n \leq N$, then \eqref{log2nucond} is satisfied for all $n > N$.
  If
  \[
  S(n):=\sum_{i=1}^n \opn{log}(p_i)
  \]
  then any $c$ satisfying 
  \[
  \left ( n - \frac{\opn{log}(C)}{\opn{log}(2)} \right )m \opn{log}(2) \leq S(n)
  \]
  for all $n \leq N$ suffices.
  The result follows.
\end{proof}
We note that Lemma \ref{2nulemma} improves on a well-known bound of Robin \cite{Robin} when $x$ is small.
\begin{cor}\label{alphacor}
  We have
  \[
  \alpha(a) \leq \left (
  \frac{108199}{675 \pi^2} d^2 + \frac{216263}{8 \pi^2} d^{1/6}
  \right ) C(a).
  \]
\end{cor}
\subsection{Elliptic Obstructions}\label{ellobssec}
We now produce bounds for $\opn{EllObs}_{k(4-a)}(\Gamma, \ec)$ and exhibit a set $\ec$ satisfying the conditions of Proposition \ref{intsingextprop} and \ref{bcextprop}.
We will use $\ec/\Gamma$ to denote a set of representatives for the orbits of $\ec$ for the action of $\Gamma$ and, where no confusion is likely to arise, we will use $K \in \ec / \Gamma$ to denote an embedding $(K \hookrightarrow L) \in \ec/\Gamma$.
\begin{lemma}\label{ellobsprop}
  The obstruction space $\opn{EllObs}_{k(4-a)}(\Gamma, \ec)$ is a subspace of
\[
  \bigoplus_{K \in \ec/\Gamma} \bigoplus_{i=0}^{2k-1} M_{(4-a)k + i}(\Gamma \cap \opn{O}(K)).
  \]
\end{lemma}
\begin{proof}
  As for Proposition 4.1 of \cite{HMvol}.
\end{proof}
\begin{lemma}\label{eclem}
  Suppose $2d$ is square-free.
  Then there exists a set $\ec$ satisfying the conditions of Proposition \ref{intsingextprop} and \ref{bcextprop}.
  The set $\ec$ is given by
  \[
  \ec =
  \ec^{I}  \sqcup
  \ec^{II} \sqcup
  \ec^{III},
  \]
  where $\ec^{I}$, $\ec^{II}$, $\ec^{III}$ are sets of primitive embeddings $\iota_K: K \hookrightarrow L$ satisfying the following conditions.
  \begin{enumerate}
  \item If $\iota_K \in \ec^{I}$ then $ K=2U \op \la -2r \ra$ and $0<r\leq 2 \sqrt{d}$.
  \item If $\iota_K \in \ec^{II}$ then $K=U \op U(3) \op \la -2r \ra$ and $0<r\leq (2 \sqrt{d})/3$.
  \item If $\iota_K \in \ec^{III}$ then  $K=U \op P \op \la -2r \ra$, $0<r\leq (2 \sqrt{d})/3$ and $P$ is an indefinite binary quadratic form with $D(P) = C_9$.
  \end{enumerate}
\end{lemma}
\begin{proof}
  We first show that $\ec$ satisfies the conditions of Proposition \ref{bcextprop}.
  We take the $\ZZ$-basis of $L$ given in \eqref{bcsplitbasis} and take $g \in N(F)_{\ZZ}$ as in Lemma \ref{bcsplitlem}.
  By \eqref{Adeltae} and \eqref{bcsplitbasis},  $(\opn{det} A)^2 \opn{det} B = \opn{det} L = 12 d$ and, as $2d$ is square-free, the pair $(\delta, \delta e)$ defining the matrix $A$ is given by
$(1,1)$, $(1,2)$, $(1,3)$ or $(1,6)$.
  By \eqref{nf} (as noted in \cite[Proposition 2.27]{GHSK3}), $Z \in \Gamma_0(e)$ where $\Gamma_0(e) \subset \opn{SL}(2, \ZZ)$ is a congruence subgroup.
  By considering $\opn{SL}(2, \ZZ)/\Gamma(2)$ and noting that $\Gamma(2)$ contains no torsion other than $\pm I$ (or by the results of \cite{sebbar}), the group $\Gamma_0(2)$ contains no 6-torsion, allowing us to assume that $(\delta, \delta e)=(1,1)$ or $(1,3)$.
  The lattice
$B$ in Lemma \ref{bcsplitlem} is negative definite  of determinant $(12d)(\delta^4 e^2)^{-1}$ and so admits a Gauss decomposition \cite[p.358]{SPLAG} of the form
  \[
  B
  =
  \begin{pmatrix}
    \alpha & \beta \\
    \beta & \gamma
  \end{pmatrix}
  \]
  where $ \alpha < - 2\beta \leq - \alpha \leq - \gamma$ and 
  $\beta \leq 0$ if $\alpha = \gamma$.
Therefore, there exists $t, v \in B$ such that $t \neq 0$, $v$ is primitive, $(t, v)=0$ and $\vert (v,v) \vert \leq 2 \sqrt{(4d)(\delta^4 e^2)^{-1}}$.
  If $(\delta, \delta e) = (1,1)$ then the Gram matrix of $B^{\perp} \subset L$ is given by
  \begin{equation*}
    B^{\perp} =
    \begin{pmatrix}
      0 & I \\
      I & D
    \end{pmatrix}
  \end{equation*}
  and, by the classification of unimodular lattices, $B^{\perp} \cong 2U$ and  $K = 2U \op \la -2r \ra$ where $0<r \leq 2\sqrt{d}$.
  Similarly, if $(\delta, \delta e) = (1,3)$ then $B$ is of signature $(2,2)$ with
  $D(B) = C_3 \op C_3$ or $C_9$.
  If $D(B) = C_3 \op C_3$ then, by the classification of 3-elementary lattices \cite[p.386]{SPLAG}, $B^{\perp} \cong U \op U(3)$. 
  If $D(B) = C_9$ then, by Corollary 1.13.5 of \cite{Nikulin}, $B^{\perp} = U \op P$ where $P$ is an indefinite binary quadratic form of determinant $9$.
  Therefore, $K = U \op U(3) \op \la -2r \ra$ or $K= U \op P \op \la -2r \ra$ with  $0<r \leq (2/3) \sqrt{d}$ and $\ec$ satisfies the conditions of Proposition \ref{bcextprop}.
  The case of Proposition \ref{intsingextprop} follows by a similar argument.
\end{proof}
\begin{defn}
  As for $\alpha$, we define $\beta$ by
\[
  \opn{dim} \bigoplus_{K \in \ec/ \Gamma} \bigoplus_{i=0}^{2k-1} M_{k(4-a) + i}(\Gamma \cap  \opn{O}(K))
  = \beta(a) k^4 + O(k^3).
  \]
We use
  $\beta^I(a)$,
  $\beta^{II}(a)$
  and
  $\beta^{III}(a)$
  to denote the total contribution to $\beta(a)$ from embeddings of
  $K \in \ec^I$,
  $\ec^{II}$ and
  $\ec^{III}$, respectively.
  We will use 
  $\beta_K(a)$
  (or, if no confusion is likely to arise, $\beta_K$)
  to denote the contribution to $\beta(a)$ from a fixed embedding
  $K \in \ec/\Gamma$.
\end{defn}
\begin{proposition}\label{betaIbound}
  For $\epsilon >0$ suppose $A_{\epsilon}$ satisfies $2^{\nu(i)} \leq A_{\epsilon} i^{\epsilon}$ for all $i \in \NN \backslash \{0\}$.
  If $2d$ is square-free, then    
\[
  \beta^I(a) \leq
  \left( \frac{(6144)  8^{\epsilon} A_{\epsilon}^3 D(a)}{5(3 + \epsilon) \pi^2} \right )d^{(3+ 3 \epsilon)/2} \left ( \opn{log}(4 \sqrt{d}) + 1 \right ),
\]
  where  
  \begin{equation}\label{Ddef}
    D(a) :=
    2(4-a)^3 +
    6(4-a)^2 +
    8(4-a) +
    4.
  \end{equation}
\end{proposition}
\begin{proof}
    We estimate
    \[
    \beta^I(a)
    :=
    \sum_{K \in \ec^I/\Gamma} \beta_K(a),
    \]
    starting by bounding the number of embeddings $K \hookrightarrow L$ up to $\Gamma$-equivalence. 
    Let $K \cong 2U \op \la -2r \ra \in \ec^I$.    
    By Proposition 1.15.1 of \cite{Nikulin}, the primitive embeddings $K \hookrightarrow L$ are determined by tuples $(H_K, H_L, \gamma; T, \gamma_T)$ where
    $H_K \subset D(K)$ and
    $H_L \subset D(L)$ are subgroups;
    $\gamma:q_K \vert_{H_K} \xrightarrow{\sim} q_L \vert_{H_L}$
    is an isomorphism of groups preserving the restriction of $q_K$ and $q_L$ to $H_K$ and $H_L$, respectively;
    $T$ is a lattice such that $gen(T) = (0,1,-\delta)$ where
    \begin{equation}\label{deltadef}
      \delta \cong (q_K \op (-q_L) \vert \Gamma_{\gamma}^{\perp})/\Gamma_{\gamma} 
    \end{equation}
    and $\Gamma_{\gamma}$ is the pushout of $\gamma$ in $D(K) \op D(L)$;
    and $\gamma_T:q_T \rightarrow (-\delta)$ is an isomorphism of finite quadratic forms.
    As explained in \cite[p.129-130]{Nikulin}, the embeddings defined by the  tuples $(H_K, H_L, \gamma; T, \gamma_T)$ and $(H_K', H_L', \gamma'; T', \gamma_T')$ are equivalent under $\widetilde{\opn{O}}(L)$ if and only if
    $\gamma = \gamma'$ and $\gamma_T = \pm \gamma_T'$.
Therefore, as 
    $\vert \widetilde{\opn{O}}(L):\widetilde{\opn{SO}}^+(L) \vert = 4$
    and $\widetilde{\opn{SO}}^+(L) \subset \Gamma$, there are at most four  $\Gamma$-equivalent embeddings $K \subset L$ for each tuple $(H_K, H_L, \gamma; T, \gamma_T)$, implying 
    \begin{equation}\label{betai1}
      \beta^I(a)
\leq
       4  \sum_{\substack{K \cong 2U \op \la -2r \ra \\ 0<r \leq 2 \sqrt{d}}}
      \sum_{(H_K, H_L, \gamma; T, \gamma_T)} \beta_K(a). 
    \end{equation}
    (Where we have adopted the convention that when $K$, $H_K$, $H_L$, $\gamma$, $T$ and $\gamma_T$ occur as indices in a summation, they depend on the summand to the left).

    We now count the groups $H_K$ and $H_L$.
    As $D(K)$ is cyclic, the subgroup $H_K$ is uniquely determined by its order $s$ and as
    $H_K \cong H_L \subset D(L)$ then $s \vert 6d$.
As $2d$ is square-free, then
    \[
    D(L) \cong
    \begin{cases}
      C_2^{\op 2} \op C_3^{\op 2} \op C_{d/3} & \text{if $3 \vert d$} \\
      C_2^{\op 2} \op C_3 \op C_d & \text{otherwise,} \\
    \end{cases}
    \]
    implying there are at most $1$, $3$, $4$ or $12$ embeddings of $H_L \cong C_s \subset D(L)$ if  $(s, 6)=1$, $2$, $3$ or $6$, respectively. 
Therefore, by \eqref{betai1},
    \begin{align}
      \beta^I(a)
      & \leq
      48 \sum_{\substack{s \mid 12d \\ s \leq 4 \sqrt{d} \\ H_K \cong C_s}}
      \sum_{\substack{K \cong 2U \op \la -2r \ra \\ 0<r \leq 2\sqrt{d}}}
      \sum_{\gamma, T, \gamma_T} \beta_K, \label{betai2}
    \end{align}
    where the constraint  $s \leq 4 \sqrt{d}$ is obtained by noting that $C_s \cong H_K \subset D(K) \cong C_{2r}$ and $r \leq 2 \sqrt{d}$ (which follows from Lemma \ref{eclem}).

    We now count the isomorphisms $\gamma$.
    As $\gamma:H_K \xrightarrow{\sim} H_L$ preserves the restrictions of $q_K$ and $q_L$, there are at most $\vert \opn{O}(H_L) \vert$ choices for $\gamma$ for a given $(H_K, H_L)$.
    To bound $\vert \opn{O}(H_L) \vert$, suppose $H_L \cong \la (x,y) \ra \subset D(L) \cong C_6 \op C_{2d}$ and let 
\[
    \frac{q_1}{q_2} := \frac{-dx^2 - 3y^2}{6d}
    \]
where $q_1, q_2 \in \ZZ$ are coprime.
    We show that
    \begin{equation}\label{thetas3s}
      \frac{q_1}{q_2} = \frac{\theta_s}{3s}
    \end{equation}
    for some $\theta_s \in \ZZ$.
    As $(s(x,y))^2 \equiv 0 \bmod{ 2 \ZZ}$ then $q_2 \vert s^2$.
    Suppose $p \vert q_2$ for prime $p$. 
    If $p \neq 3$ then $p \vert q_2 \vert s \vert 6d$ and, as $2d$ is square-free, $\vert q_2 \vert_p = p^{-1}$.
    Noting that $q_2 \vert s^2$, we have $\vert s \vert_p \leq p^{-1}$, implying $\vert 3s \vert_p = \vert s \vert_p \leq \vert q_2 \vert_p$. If $p=3$ then, as $2d$ is square-free, $\vert 6d \vert_3 \geq 3^{-2}$  and $\vert q_2 \vert_3 \geq 3^{-2}$ as $q_2 \vert 6d$.
    From $q_2 \vert s^2$ we have $\vert s \vert_3 \leq 3^{-1}$ and $\vert 3 s \vert_3 \leq 3^{-2} \leq \vert q_2 \vert_3$, 
    hence $q_2 \vert 3s$ and \eqref{thetas3s} follows.
    As $H_L$ is cyclic then  $\vert \opn{O}(H_L) \vert$ is equal to the number of  solutions of
    $(z^2 - 1) \theta_s \equiv 0 \bmod{ 6s}$
    where
    $z \in (\ZZ/s\ZZ)^*$.
    We can obtain an upper bound  by counting solutions to 
\[
    (z^2 - 1) \theta_s \equiv 0 \bmod{ s}
    \]
or, by noting that   
    \begin{equation}\label{alcm}
      s = \opn{lcm} \left( \frac{6}{(6,x)}, \frac{2d}{(2d,y)} \right )
    \end{equation}
    is square-free, by counting solutions to
    \begin{equation}\label{HKeqp}
      (z^2 -1) \theta_s \equiv 0 \bmod{ p}
    \end{equation}
    for each prime $p \vert s$.
    We first show that $(p, \theta_s) = 1$ where $p \neq 2, 3$ is prime. 
    As $s \vert 6d$, then 
    \begin{equation*}
      dx^2 + 3y^2 \equiv 3y^2 \bmod{ p}
    \end{equation*}
    where $p \neq 2,3$ and $p \vert s$.
    We must have $(p,y)=1$ otherwise, as $2d$ is square-free and by \eqref{alcm}, we obtain the contradiction $(p,s)=1$.
    As $\theta_s \vert dx^2 + 3y^2$ then $(p,  \theta_s)=1$ and 
\eqref{HKeqp}   has 2 solutions if $p \neq 2,3$;
    at most 2 solutions if $p=2$;
    and at most 3 solutions if $p=3$; 
    implying $\vert \opn{O}(H_L) \vert \leq 3. 2^{\nu(s) - 1}$.
Therefore, from \eqref{betai2},
    \begin{align}
      \beta^I(a)
      & \leq
      72 \sum_{\substack{s \mid 12d \\ s \leq 4 \sqrt{d} \\ H_K \cong C_s}} 2^{\nu(s)}
      \sum_{\substack{K \cong 2U \op \la -2r \ra \\ 0<r \leq 2 \sqrt{d}}}
      \sum_{T, \gamma_T} \beta_K. \label{betai3}
    \end{align}
    As $H_K \subset D(K) \cong C_{2r}$ then $s \vert 2r$, $0<r\leq 2 \sqrt{d}$ and, by \eqref{betai3},
    \begin{align}
      \beta^I(a)
      & \leq
      72 \sum_{\substack{s \mid 12d \\ s \leq 4 \sqrt{d}}} 2^{\nu(s)}
      \left (
      \sum_{\substack{i=1 \\ K \cong 2U \op \la -si \ra \\ 2 \vert si}}^{(4 \sqrt{d})/s}
      \sum_{T, \gamma_T} \beta_K
      \right ). \label{betai4}
    \end{align}

    We now count $T$ and $\gamma_T$.
    The lattice $T$ is of rank 1 and so uniquely determined by the data $(D(K), D(L), \gamma)$ defining $\delta$.
    By \eqref{deltadef}, for fixed $K$, we have $\opn{det} T \vert (12d)(2r)$ and so, as in Lemma \ref{OqkLemma}, there are at most $2^{\nu(6dr)}$ choices for $\gamma_T$ for a given $K$.
    Therefore, by \eqref{betai4},
    \begin{align}
      \beta^I(a)
      & \leq
      72 \sum_{\substack{s \mid 12d \\ s \leq 4 \sqrt{d}}} 2^{\nu(s)}
      \sum_{\substack{i=1 \\ K \cong 2U \op \la -si \ra \\ 2 \vert si }}^{(4 \sqrt{d})/s}
      2^{\nu(6dsi)} \beta_K. \label{betai5}
    \end{align}
    
    We now bound $\beta_K$. 
    As,
    \[
    \opn{Vol}_{HM}(\Gamma \cap \opn{O}(K))
    =
    \vert \la \opn{O}(K), -I \ra : \la \Gamma \cap \opn{O}(K), -I \ra \vert
    \opn{Vol}_{HM}(\opn{O}(K))
    \]
    and $\widetilde{\opn{SO}}^+(K) \subset \Gamma \cap \opn{O}(K)$
    then, by \eqref{OKStOKbound} and Lemma \ref{OqkLemma}, 
\[
    \vert \la \opn{O}(K), -I \ra :  \la \Gamma \cap \opn{O}(K), -I \ra \vert
    \leq
    4 \vert \opn{O}(q_K) \vert 
    \leq
    2^{\nu(r)+2}. 
\]
As $\opn{gen}(K)  = \opn{gen}(K^{2s, 2s}_{r,s}(0,l))$, we bound $\opn{Vol}_{HM}(\opn{O}(K))$ using Lemma \ref{Kbound}, obtaining
  \begin{equation}\label{BKIhm}
    \opn{Vol}_{HM}(\Gamma \cap \opn{O}(K)) \leq \frac{r^2}{60 \pi^2}.
  \end{equation}
  From the identities
  \[
  \begin{array}{c c c c}
    \sum_{k=1}^n k = n(n+1)/2, & \sum_{k=1}^n k^2 = n(n+1)(2n+1)/6 & \text{and}  & \sum_{k=1}^n k^3 = n^2(n+1)^2 / 4, 
  \end{array}
  \]
  we have
  $\beta_K \leq D(a) (16r^2/675 \pi^2)$   
  where
\[
  D(a) := 
    2(4-a)^3 +
    6(4-a)^2 +
    8(4-a) +
    4
    \]
is the $k^4$-coefficient of the polynomial
  \[
  \sum_{j=0}^{2k-1} ((4-a)k+j)^3.
  \]
  Then, from \eqref{betai5}, noting that $s \vert 6 d$ implies $\nu(6dsi) = \nu(6di)$ and by bounding $\beta_K$ as in \eqref{BKIhm}, we have 
\[
  \beta^I(a)
    \leq
    \frac{12}{5 \pi^2} D(a) 2^{\nu(3d)} 
    \sum_{\substack{s \mid 12d \\ s \leq 4 \sqrt{d}}} 2^{\nu(s)}
    \sum_{i=1}^{(4 \sqrt{d})/s}  s^2 i^2 2^{\nu(i)}.  
\]
As $2^{\nu(i)} \leq A_{\epsilon} i^{\epsilon}$ then, 
  \begin{align}
    \sum_{i=1}^{(4\sqrt{d})/s} 2^{\nu(i)} i^2
    & \leq
    \sum_{i=1}^{(4 \sqrt{d})/s} A_{\epsilon} i^{2 + \epsilon} \nonumber \\
& \leq
    \frac{A_{\epsilon}}{3+\epsilon} \frac{(8 \sqrt{d})^{3 + \epsilon}}{s^{3 + \epsilon}}, \label{betai7}
\end{align}
  (with \eqref{betai7} following by comparing with an integral and noting that $s \leq 4 \sqrt{d}$). 
  By \eqref{betai7},
  \begin{align*}
    \sum_{\substack{s \mid 12 d \\ s \leq 4 \sqrt{d}}}
    2^{ \nu(s)} 
    \left (
    \sum_{i=1}^{(4 \sqrt{d})/s} s^2 i^2 2^{\nu(i)}
    \right )
    & \leq
    \left (
    \frac{8^{3 + \epsilon} A_{\epsilon}}{3 + \epsilon}
    \right )
    d^{(3+\epsilon)/2}
    \sum_{\substack{s \leq 4 \sqrt{d} \\ s \mid 12 d }} \frac{2^{ \nu(s)}}{s^{1 + \epsilon}} \\
    & \leq
    \left (
    \frac{8^{3 + \epsilon} A_{\epsilon}^2 }{3 + \epsilon}
    \right )
    d^{(3+\epsilon)/2}
    \sum_{s=1}^{4 \sqrt{d}} s^{-1} \\
\intertext{and as $\sum_{i=1}^N i^{-1} \leq 1 + \opn{log}(N)$ then,}
    & \leq
    \left (
    \frac{8^{3 + \epsilon}A_{\epsilon}^2}{3 + \epsilon}
    \right )
    d^{(3+\epsilon)/2}
    \left ( \opn{log}(4 \sqrt{d}) + 1 \right ).
  \end{align*}
  We therefore obtain
\[ 
    \beta^I(a)
\leq  \left( \frac{(6144)  8^{\epsilon} A_{\epsilon}^3 D(a)}{5(3 + \epsilon) \pi^2} \right )d^{(3+ 3 \epsilon)/2} \left ( \opn{log}(4 \sqrt{d}) + 1 \right ).
    \]
\end{proof}
\begin{proposition}\label{betaIIbound}
  For $\epsilon>0$ suppose  $A_{\epsilon}$ satisfies $2^{\nu(i)} \leq A_{\epsilon} i^{\epsilon}$ for all $i \in \NN \backslash \{0\}$.
  If $2d$ is square-free, then  
  \[
  \beta^{II}(a) \leq
  \frac{3726508032 A_{\epsilon}^3 8^{\epsilon}}{5 (3 + \epsilon)} D(a) d^{3(1+\epsilon)/2} \left ( \opn{log} \left ( \frac{4 \sqrt{d}}{3} \right ) + 1 \right ),
\]
  where $D(a)$ is as in \eqref{Ddef}.
\end{proposition}
\begin{proof}
  Let $K \cong U \op U(3) \op \la -2r \ra \in \ec^{II}$.
  As in Proposition \ref{betaIbound},
  \begin{equation}\label{betaII0}
    \beta^{II}(a) \leq 4 
    \sum_{\substack{K \cong U \op U(3) \op \la -2r \ra \\ 0<r \leq (2 \sqrt{d})/3}}
    \sum_{(H_K, H_L, \gamma; T, \gamma_T)} \beta_K
  \end{equation}
  and we bound the number of tuples $(H_K, H_L, \gamma; T, \gamma_T)$ encoding $\Gamma$-equivalent embeddings of $K \in \ec^{II}$.
  We begin by characterising the groups $H_K$.
  Suppose
\begin{equation}\label{betaIIdisc}
  \begin{array}{ccc}
    D(K) = C_3^{\op 2} \op C_{3^{\mu}} \op C_{2r'} &
    \text{and} &
    D(L) = C_2^{\op 2} \op C_3 \op C_{3^{\delta}} \op C_{d'}
  \end{array}
\end{equation}
  where $3^{\mu} \Vert 2r$, $3^{\delta} \Vert 2d$ and $r':= 3^{-\mu} r$, $d':=3^{-\delta} d$.
As explained in \cite{PetrilloSubgroups}, if $G_1$ and $G_2$ are finite groups of coprime order then, by a theorem of Suzuki \cite{Suzuki} (see also \S4 of \cite{Schmidt}), subgroups of $G_1 \times G_2$ are of the form $G_1' \times G_2'$ where $G_1' \subset G_1$ and $G_2' \subset G_2$.
Therefore, 
  \begin{equation}\label{HKdec0}
    H_K = H_K^3 \op C_s
  \end{equation}
  where $H_K^3 \subset C_3^{\op 2} \op C_{3^{\mu}}$ and $C_s \subset C_{2r'}$ for some $s$.
  As $H_L \cong H_K$ then, by \eqref{betaIIdisc}, $H_K^3 \subset C_3 \op C_{3^{\delta}}$ and, as $2d$ is square-free, $\delta=0$ or $1$ and $H_K^3 \cong \{e \}$, $C_3$ or $C_3^{\op 2}$.
  
  We now bound the number of embeddings $H_K \subset D(K)$, where $H_K$ is as in \eqref{HKdec0} and $C_s$ is fixed.
  There is a single embedding $H_K \subset D(K)$ if $H_K^3 = \{e\}$; 
  at most 13 embeddings if $H_K^3=C_3$; 
  and at most 13 embeddings if $H_K^3 = C_3^{\op 2}$.
Therefore (by using an identical argument for $H_L \subset D(L)$ and noting there are 3 elements of order $2$ in $C_2^{\op 2}$), for fixed $s$ there are at most $3$ pairs $(H_K, H_L)$ if $H_K^3 = \{e \}$;
  $156$ pairs if $H_K^3 = C_3$;
  and $39$ pairs if $H_K^3 = C_3^{\op 2}$.

We now count the maps $\gamma$.
  For fixed $(H_K, H_L)$, there are at most $\vert \opn{O}(H_L) \vert$ choices for $\gamma: H_K \xrightarrow{\sim} H_L$. 
  As in Proposition \ref{betaIbound},
  $\vert \opn{O}(H_L) \vert \leq (3)2^{\nu(s)-1}$ if $H_K^3=\{e \}$;
  $\vert \opn{O}(H_L) \vert \leq (3)2^{\nu(s)}$ if $H_K^3 = C_3$;
  and $\vert \opn{O}(H_L) \vert \leq (9)2^{\nu(s) + 3}$ if $H_K^3 = C_3^{\op 2}$.
Hence, by \eqref{betaII0}, 
  \[
    \beta^{II}(a)
\leq
      13122
      \sum_{\substack{K=U \op U(3) \op \la -2r \ra \\ 0 < r \leq (2\sqrt{d})/3}}
      \sum_{H_K=H_K^3 \op C_s} 2^{\nu(s)}
      \sum_{(T, \gamma_T)} \beta_K. 
  \]
  As in Proposition \ref{betaIbound}, the lattice $T$ is uniquely determined by $(H_K, H_L, \gamma)$, implying there are at most $2^{\nu(6dr)}$ choices for $\gamma_T$ for a given $T$.
  If $H_K \cong H_K^3 \op C_s$ then $s \vert 2r$ and as  $H_K \cong H_L$ then $s \vert 2d$.
  As $r \leq (2 \sqrt{d})/3$, then   
  \begin{align*}
    \beta^{II}(a) 
    \leq
    13122
    \sum_{\substack{s \vert 2d \\ 0 < s \leq (4\sqrt{d})/3}}2^{\nu(s)}
    \sum_{\substack{i=1 \\ K = U \op U(3) \op \la -si \ra \\ 2 \vert si}}^{\frac{4 \sqrt{d}}{3s}} 2^{\nu(6dsi)} \beta_K.
  \end{align*}

  We now bound $\beta_K$.
  By \eqref{OKStOKbound} and Lemma \ref{Oqk2-3-lem},
  \[
    \vert \la \opn{O}(K), -I \ra : \la \Gamma \cap \opn{O}(K), -I \ra \vert
     \leq 4 \vert \opn{O}(q_K) \vert \\
     \leq (2496)2^{\nu(2r)}
  \]
  and by Lemma \ref{II-III-HMVol},
\begin{equation*}
    \beta_K(a) \leq \frac{7488}{5} D(a) r^2.
  \end{equation*}
  Therefore,  
  \begin{align*}
    \beta^{II}(a)
    \leq & \frac{98257536}{5} 2^{\nu(3d)} D(a) \left( \sum_{\substack{s \vert 2d \\ 0<s \leq (4 \sqrt{d})/3 }}  2^{\nu(s)} s^2  
    \sum_{i=1}^{(4 \sqrt{d})/3s} 2^{\nu(i)} i^2 \right ) 
  \end{align*}
    where $D(a)$ is as in \eqref{Ddef}.
  As in \eqref{betai7}, \begin{align*}\sum_{i=1}^{(4 \sqrt{d})/3s} 2^{\nu(i)} i^2
& \leq \frac{}{} \frac{A_{\epsilon}(8 \sqrt{d})^{3 + \epsilon}}{(3 + \epsilon) (3s)^{3 + \epsilon}}
  \end{align*}
and so 
\[
  \beta^{II}(a) \leq
  \frac{3726508032 A_{\epsilon}^3 24^{\epsilon}}{5 (3 + \epsilon)} D(a) d^{3(1+\epsilon)/2} \left ( \opn{log} \left ( \frac{4 \sqrt{d}}{3} \right ) + 1 \right ).
  \]
\end{proof}
\begin{proposition}\label{betaIIIbound}
  For $\epsilon>0$ suppose $A_{\epsilon}$ satisfies $2^{\nu(i)} \leq A_{\epsilon} i^{\epsilon}$ for all $i \in \NN \backslash \{0\}$.
  If $2d$ is square-free, then 
  \begin{equation*}
    \beta^{III} (a) \leq
    \frac{47775744 A_{\epsilon}^3 8^{\epsilon} D(a)}{5(3 + \epsilon)} d^{3(1+\epsilon)/2}\left ( \opn{log} \left ( \frac{4 \sqrt{d}}{3} \right ) + 1 \right ),
  \end{equation*}
  where $D(a)$ is as in \eqref{Ddef}.
\end{proposition}
\begin{proof}
  Our argument proceeds as in Proposition \ref{betaIIbound}. 
  Let $K \cong U \op P \op \la -2r \ra \in \ec^{III}$ where $P$ is an indefinite binary quadratic form of determinant 9, 
then 
  \begin{equation*}
    \beta^{III}(a) \leq
    4 \sum_{\substack{K \cong U \op P \op \la -2r \ra \\ 0<r \leq (2 \sqrt{d})/3}}
    \sum_{(H_K, H_L, \gamma; T, \gamma_T)} \beta_K,
  \end{equation*}
  and we begin by bounding the number of tuples $(H_K, H_L, \gamma; T, \gamma_T)$ for each $K$.
We have 
  \[
  H_K^3 \op C_s \cong H_K  \subset D(K)
\] 
  where  $H_K^3 = \{e \}$, $C_3$ or $C_3^{\op 2}$ and $s \vert 2r$.
  For fixed $C_s$  there are
  at most 3 pairs $(H_K, H_L)$ if $H_K^3 = \{ e \}$ or $C_3^{\op 2}$ and at most 48 pairs if $H_K^3 = C_3$.
  The number of $\gamma: H_L \xrightarrow{\sim} H_K$ preserving the restriction of $q_L$ and $q_K$ is bounded as in Proposition \ref{betaIbound}.
Hence, as in Proposition \ref{betaIIbound}, 
  \begin{align*}
    \beta^{III}(a)
    \leq & \frac{1259712}{5} D(a) \left( \sum_{\substack{s \vert 2d \\ 0<s \leq (4 \sqrt{d})/3 }}  2^{\nu(s)} s^2  
    \sum_{i=1}^{(4 \sqrt{d})/3s} 2^{\nu(6dsi)} i^2 \right ) \\
    \leq & \frac{47775744 A_{\epsilon}^3 8^{\epsilon} D(a)}{5(3 + \epsilon)} d^{3(1+\epsilon)/2}\left ( \opn{log} \left ( \frac{4 \sqrt{d}}{3} \right )+ 1 \right ) 
  \end{align*}
  where $D(a)$ is as in \eqref{Ddef}.
\end{proof}
 \section{General Type Results}\label{GTsec}
\begin{proposition}\label{mfbounds1}
    Suppose $2d$ is square-free and take $\ec$ as in Lemma \ref{eclem}.
    If  $a=2$ and $d>2.04134 \times 10^{28}$, 
    or $a=3$ and $d>2.5112 \times 10^{29}$ then 
    \begin{equation}\label{formbound}
      \opn{dim} M_{k(4-a)}(\Gamma) 
      - \opn{dim} \opn{RefObs}_{k(4-a)}(\Gamma)
        - \opn{dim} \opn{EllObs}_{k(4-a)}(\Gamma, \ec)
        > 0 
    \end{equation}
    for $k \gg 0$.
  \end{proposition}
  \begin{proof}
    By Hirzebruch-Mumford proportionality, \eqref{formbound} is satisfied for $k \gg 0$ if
    \begin{equation}\label{formbound0}
      \gamma (4-a)^4
      - \alpha(a)
      - \beta^I(a)
      - \beta^{II}(a)
      - \beta^{III}(a) > 0.
    \end{equation}
    As  $\opn{log}(d) \leq 6 d^{1/12}$ for $d>10^{10}$ and by using the bounds of Corollary \ref{alphacor}, Proposition  \ref{betaIbound}, \ref{betaIIbound} and \ref{betaIIIbound}, we obtain a polynomial in $d^{1/12}$ bounding \eqref{formbound0} from below.
    By Descartes' rule of signs \cite{Descartes}, this polynomial has at most one root, which we locate using a computer.
\end{proof}
  Because of the range of values taken by $\nu(d)/\opn{log}(d)$, the bounds obtained in Proposition \ref{mfbounds1} are quite large.
  However, as a consequence of the Erd\H{o}s-Kac theorem \cite{ErdosKac}, $\nu(d)/\opn{log}(d)$ is typically much smaller than the worst case assumed in Proposition \ref{mfbounds1}, allowing smaller bounds to be obtained by constraining $\nu(d)$.
\begin{proposition}\label{mfbounds2}
    Suppose $d$ is prime and take $\ec=\ec^I$ as in Lemma \ref{eclem}.
    If
    $a=2$ and $d>409679370162$, or 
    $a=3$ and $d>3468755940829$
    then
\[ 
    \opn{dim} M_{k(4-a)}(\Gamma) 
      - \opn{dim} \opn{RefObs}_{k(4-a)}(\Gamma)
        - \opn{dim} \opn{EllObs}_{k(4-a)}(\Gamma, \ec)
        > 0
        \]
for $k \gg 0$.
\end{proposition}
\begin{proof}
  We proceed as in Proposition \ref{mfbounds1}, noting that we can assume $\ec=\ec^I$ and using the improved bound
  \[
  \beta^I(a)
  \leq
  \frac{16128(8^{3+\epsilon})A_{\epsilon}D(a)d^{(3+\epsilon)/2}}{85 \pi^2 (3 + \epsilon)},
  \]
  which is obtained by setting $\nu(d)=1$ in \eqref{betai5}.
\end{proof}
  \begin{thm}\label{gtthm}
    The orthogonal modular varieties $\fc_L(\Gamma)$ are of general type for all odd square-free $d>2.5112 \times 10^{29}$ and for all prime $d>3468755940829$\footnote{It was questioned in \cite{ConwaySloaneMass} if the local density $\alpha_2$ \cite{Watson} (stated in \cite[Theorem 5.6.3]{Kitaoka}) \emph{might} differ from its actual value by a factor of $2^n$ for a rank $n$ lattice.
      We do not know if this has been resolved.
      The cautious reader may therefore wish to multiply the bounds of Proposition \ref{mfbounds1}, \ref{mfbounds2} and Theorem \ref{gtthm} by a factor of $4^{1/4}$.
}.
  \end{thm}
  \begin{proof}
    Immediate from Propositions \ref{mfbounds1}, \ref{mfbounds2}, Corollary  \ref{weight3cuspformcor} and Theorem \ref{lwcft}.
  \end{proof}
    \begin{cor}
    Under the conditions of Theorem \ref{gtthm}, the moduli $\mc$ of deformation generalised Kummer varieties of dimension 4 with polarisation of degree $2d$ and split type is of general type if the orthogonal modular variety $\fc(\Gamma)$ is.
  \end{cor}
  \begin{proof}
    By \cite[Theorem 3.10]{Handbook}, there is an open immersion from $\mc \rightarrow \fc(\Gamma)$.
\end{proof}
 \section{Acknowledgements}
I thank Professor Gregory Sankaran for piquing my interest in general type results back when I was a PhD student and for all manner of interesting conversations over the years.
I also thank the anonymous referee for suggesting the approach of \S\ref{lwcfsec}, which greatly improved upon my earlier attempts.
Part of the paper was completed when I worked at the University of Bonn and I acknowledge the grant SFB/TR 45 \emph{`Periods, Moduli Spaces and Arithmetic of Algebraic Varieties'} of the DFG (German Research Foundation) under which I was partially supported.
I am especially grateful to Professor Huybrechts for providing  excellent working conditions and a very stimulating environment. 

 \bibliographystyle{amsalpha}
\bibliography{bib}{}
\noindent \texttt{matthew.r.dawes@bath.edu}
\end{document}